\newtheorem{theorem}{Theorem}[section]
\newtheorem{lemma}[theorem]{Lemma}
\newtheorem{proposition}[theorem]{Proposition}
\newtheorem{corollary}[theorem]{Corollary}
\newtheorem{definition}[theorem]{Definition}
\newtheorem{rem}[theorem]{Remark}
\newtheorem{example}[theorem]{Example}
\newcommand{\N}{\Bbb N}
\newcommand{\Z}{\Bbb Z}
\newcommand{\R}{\Bbb R}
\newcommand{\BBB}{\color{black}}
\newcommand{\ZZZ}{\color{black}} 
\newcommand{\EEE}{\color{black}} 
\newcommand{\RRR}{\color{black}}
\def\Id{\mathbf{Id}}
\def\id{\mathbf{id}}
\def\eps{\varepsilon}
\def\dist{\operatorname{dist}}
\def\XXint#1#2#3{{\setbox0=\hbox{$#1{#2#3}{\int}$}
     \vcenter{\hbox{$#2#3$}}\kern-.5\wd0}}
\numberwithin{equation}{section}
\begin{document}

\title[Griffith energies as small strain limit of nonlinear models]{Griffith energies as small strain limit of nonlinear models for nonsimple brittle materials}

\keywords{Brittle materials, variational fracture, nonsimple materials,  free discontinuity problems, Griffith energies, $\Gamma$-convergence, functions of bounded variation and deformation}

\author{Manuel Friedrich}
\address[Manuel Friedrich]{Applied Mathematics M\"unster, University of M\"unster\\
Einsteinstrasse 62, 48149 M\"unster, Germany.}
\email{manuel.friedrich@uni-muenster.de}

\begin{abstract}
We consider a nonlinear, frame indifferent Griffith model for nonsimple brittle materials where the elastic energy also depends on the second gradient of the deformations. In the framework of free discontinuity and gradient discontinuity problems, we prove existence of minimizers for boundary value problems. We then pass to a small strain limit in terms of suitably rescaled displacement fields and show that the  nonlinear energies can be identified with a  linear Griffith model in the sense of  $\Gamma$-convergence. This complements the   study in \cite{Friedrich:15-2}  by providing a linearization     result in arbitrary space dimensions.    
\end{abstract}

\subjclass[2010]{74R10, 49J45, 70G75.} 
\maketitle

%--------------------------------------------------------------------------
%--------------------------------------------------------------------------
\section{Introduction}
%--------------------------------------------------------------------------

%  brittle  non-linear  elastic  materials  in  the  context  of  quasi-staticevolutions of energetic type. Given a sequence of self-similar domainsnΩ on which the imposed boundary conditionsscale according to Baˇzant’s law, we show, in agreement with several experimental data, that the correspondingsequence of evolutions converges (forn→∞) to the evolution of a crack in a brittle linear-elastic material

Mathematical models in solids mechanics typically do  not predict the mechanical behavior correctly at every scale, but have a certain limited range of applicability. A central example in that direction are models for hyperelastic materials in nonlinear (finite) elasticity and their linear (infinitesimal) counterparts.  The last decades have witnessed remarkable progress in providing a clear relationship between different models via $\Gamma$-convergence \cite{DalMaso:93}. In their seminal work \cite{DalMasoNegriPercivale:02}, \ZZZ {\sc Dal Maso, Negri, and Percivale} \EEE performed a   nonlinear-to-linear analysis in terms of suitably rescaled  displacement fields and proved the convergence of minimizers for corresponding boundary value problems. This study has been extended in various directions, including  different growth assumptions on the stored energy densities \cite{Agostiniani}, the passage from atomistic-to-continuum models \cite{Braides-Solci-Vitali:07, Schmidt:2009}, multiwell energies \cite{alicandro.dalmaso.lazzaroni.palombaro, Schmidt:08}, plasticity \cite{Ulisse}, and viscoelasticity \cite{MFMK}.     

In the present contribution, we are interested in an analogous analysis for materials undergoing fracture. Based on  the variational approach to quasistatic crack evolution by {\sc Francfort and Marigo} \cite{Francfort-Marigo:1998}, where the displacements and the   (a priori unknown) crack paths are determined from an energy minimization principle,  we consider an energy functional of Griffith-type. Such   variational models of brittle fracture,   which comprise an elastic energy stored in the uncracked region of the body and a surface  contribution comparable to the size of the crack of codimension one, have been  widely studied both at finite and infinitesimal strains, see \cite{BJ, Chambolle:2003, DalMaso-Francfort-Toader:2005,  DalMaso-Lazzaroni:2010, Francfort-Larsen:2003, Solombrino, Giacomini-Ponsiglione:2006} without claim of being exhaustive. We refer the reader to \cite{Bourdin-Francfort-Marigo:2008} for  a general overview. 

In this context, first results addressing  the question of a nonlinear-to-linear analysis have been obtained in \cite{NegriToader:2013, Zanini} in a two-dimensional evolutionary setting for a fixed crack set or a restricted class of admissible cracks, respectively. Subsequently, the problem was studied in \cite{FriedrichSchmidt:2014.2} from a different perspective. Here, a simultaneous discrete-to-continuum and nonlinear-to-linear analysis is performed for general crack geometries, but under the simplifying assumption that all deformations \BBB are \EEE close to the identity mapping.

Eventually, a result in dimension two without a priori assumptions on the crack paths and the deformations, in the general framework of free discontinuity problems (see \cite{DeGiorgi-Ambrosio:1988}), has been derived  in \cite{Friedrich:15-2}. \BBB This \EEE analysis relies fundamentally on delicate geometric \BBB rigidity \EEE  results in the spirit of \cite{FrieseckeJamesMueller:02, Chambolle-Giacomini-Ponsiglione:2007}. At this point, the geometry of crack paths in the plane is crucially exploited  and higher dimensional analogs seem to be currently out of reach. In spite of the lack of rigidity estimates, the goal of this contribution is to perform a nonlinear-to-linear analysis for brittle materials in the spirit of   \cite{Friedrich:15-2} in higher space dimensions. This will be achieved by starting from a slightly different nonlinear model for so-called \emph{nonsimple materials}.

Whereas the elastic properties of simple   materials depend only on the first gradient, the notion of a  nonsimple material \EEE refers to the fact that the elastic energy depends additionally on the second gradient of the deformation. This idea goes back to {\sc Toupin} \cite{Toupin:62,Toupin:64} and has proved to  be useful in modern mathematical elasticity, see e.g.~\cite{BCO,Batra, capriz,dunn, MFMK,MR}, since it brings additional compactness and rigidity to the problem. In a similar fashion, we consider here a Griffith model with an additional second gradient in the elastic part of the energy. This leads to a model in the framework of free discontinuity and gradient discontinuity problems.

The goal of this contribution is twofold. We first show that the regularization allows to prove existence of minimizers for boundary value problems without convexity properties for the stored elastic energy. In particular, we do not have to assume quasiconvexity \cite{Ambrosio:90-2}. Afterwards,  we identify an effective linearized Griffith energy as the $\Gamma$-limit of the nonlinear and frame indifferent models for vanishing strains. In this context, it is important to mention that, in spite of the formulation of the nonlinear model in terms of nonsimple materials, the effective limit is a   `standard' Griffith functional in linearized elasticity depending only on the first gradient. A similar justification for the treatment of nonsimple materials has recently been discussed in \cite{MFMK} for a model in nonlinear viscoelasticity.

The existence result for boundary value problems at finite strains is formulated in the space $GSBV^2_2(\Omega;\R^d)$, see \eqref{eq: space} below, consisting of the mappings for which  both the function itself and its derivative are in the class of \emph{generalized special functions of bounded variation} \cite{Ambrosio-Fusco-Pallara:2000}. The relevant compactness and lower semicontinuity results stated in Theorem \ref{th: comp} essentially follow from a study on second order variational problems with free discontinuity and gradient discontinuity \cite{Carriero2}. \BBB Another \EEE key ingredient is the   recent work \cite{Manuel} which extends the classical compactness result due to {\sc Ambrosio} \cite{Ambrosio:90} to  problems without a priori bounds on the functions.

Concerning the passage to the linearized system, the essential step is to establish a compactness result in terms of suitably rescaled  displacement fields \BBB which  measure \EEE  the distance of the deformations from the identity. Whereas in \cite{Friedrich:15-2} this is achieved  by means of delicate geometric rigidity estimates, the main idea in our approach is to partition the domain into different regions in which the gradient is `almost constant'. This construction relies on the coarea formula in $BV$ and  is the fundamental point where the presence of a second order term \BBB in the energy \EEE is used to pass rigorously to a linear theory. The linear limiting model is formulated on the space of \emph{generalized special functions of bounded deformation} $GSBD^2$, which has been studied extensively over the last years, see e.g.\  \cite{Chambolle-Conti-Francfort:2014, Chambolle-Conti-Francfort:2018, Chambolle-Conti-Iurlano:2018,  Conti-Iurlano:15, Conti-Iurlano:15.2, Crismale2, Crismale, Crismale4, Crismale3, DalMaso:13, Friedrich:15-3, Friedrich:15-4, Solombrino, Iurlano:13}.

The paper is organized as follows. In Section \ref{rig-sec: main} we first introduce our nonlinear model for nonsimple brittle materials and state our main results: we first address the existence of minimizers for boundary value problems at finite strains. Then, we present a compactness and $\Gamma$-convergence result in the passage from \BBB the nonlinear to the linearized \EEE theory. Here, we also discuss the convergence of minima and minimzers \BBB under given boundary data. \EEE  Section \ref{sec:pre} is devoted to some preliminary results about the function spaces $GSBV$ and $GSBD$. In particular, we present a compactness result in $GSBV^2_2$ involving the second gradient (see Theorem \ref{th: comp}). Finally, Section \ref{sec: proofs} contains the proofs of our results.

%----------------------------------------------------------------------------------------------------------------------
\section{The model and main results}\label{rig-sec: main}
%-----------------------------------------------------------------------------------------------------------------------

In this section we introduce our model and present the main results. We start with some basic notation.  Throughout the paper, $\Omega \subset \R^d$ is an open and  bounded set.  The notations $\mathcal{L}^d$ and $\mathcal{H}^{d-1}$ are used for  the Lebesgue measure and the $(d-1)$-dimensional Hausdorff measure in $\mathbb{R}^d$, respectively. We set  $S^{d-1}=\lbrace x \in \R^d: \, |x|=1\rbrace$. For an $\mathcal{L}^d$-measurable set $E\subset\mathbb{R}^d$,  the symbol $\chi_E$ denotes its indicator function.  For two sets $A,B \subset \R^d$, we \BBB define \EEE $A \triangle B = (A\setminus B) \cup (B \setminus A)$.  The identity mapping on $\R^d$ is indicated by $\id$ and its derivative, the identity matrix, by $\Id \in \R^{d \times d}$.  The sets of symmetric and skew symmetric matrices are denoted by $\R^{d \times d}_{\rm sym}$ and $\R^{d \times d}_{\rm skew}$, respectively. We set ${\rm sym}(F) = \frac{1}{2}(F^T + F)$ for $F \in \R^{d\times d}$ \BBB and \EEE define $SO(d) = \lbrace R\in \R^{d \times d}: R^T R = \Id, \, \det R=1 \rbrace$.

\subsection{A nonlinear model for nonsimple materials and boundary value problems}

In this subsection we introduce our nonlinear model and discuss the existence of minimizers for boundary value problems.

\emph{Function spaces:} To introduce our Griffith-type model for nonsimple materials, we first need to introduce the relevant spaces. We use standard notation for $GSBV$ functions, see \cite[Section 4]{Ambrosio-Fusco-Pallara:2000} and \cite[Section 2]{DalMaso-Francfort-Toader:2005}. In particular, we let
\begin{align}\label{eq: space2}
GSBV^2(\Omega;\R^d) = \lbrace y \in GSBV(\Omega;\R^d): \ \nabla y \in L^2(\Omega;\R^{d\times  d}), \ \mathcal{H}^{d-1}(J_y) < + \infty \rbrace,
\end{align}
where $\nabla y(x)$ denotes the approximate differential at $\mathcal{L}^d$-a.e.\ $x \in \Omega$  and $J_y$ the jump set.  We define the space
\begin{align}\label{eq: space}
GSBV^2_2(\Omega;\R^d) := \big\{ y \in GSBV^2(\Omega; \R^d): \ \nabla y \in GSBV^2(\Omega;\R^{d\times d})\big\}.
\end{align}
The approximate differential and the jump set of $\nabla y$ will be denoted by $\nabla^2 y$ and $J_{\nabla y}$, respectively. \ZZZ (To avoid confusion, we point out that in the paper   \cite{DalMaso-Francfort-Toader:2005} the notation $GSBV^2_2(\Omega;\R^d)$ was used differently, namely for $GSBV^2(\Omega;\R^d) \cap L^2(\Omega;\R^d)$.) \EEE 

A similar space  has been considered in \cite{Carriero1, Carriero2} to treat second order free discontinuity functionals, e.g., a weak formulation 
of the Blake $\&$ Zissermann model \cite{BZ} of image segmentation. We point out  that the functions are allowed to exhibit \ZZZ discontinuities. \EEE Thus, the analysis is outside of the framework of the space of special functions with bounded Hessian $SBH(\Omega)$, considered in problems of second order energies for elastic-perfectly plastic plates, see e.g.\ \cite{Carriero3}.

\emph{Nonlinear Griffith energy for nonsimple materials:}   We let $W:\R^{d \times d} \to [0,+\infty)$ be a  single well, frame indifferent stored energy functional. \BBB More precisely, \EEE  we suppose  that there exists $c>0$ such that
\begin{align}\label{assumptions-W}
{\rm (i)} & \ \ W \text{ continuous and $C^3$ in a neighborhood of $SO(d)$},\notag\\
{\rm (ii)} & \ \ \text{Frame indifference: } W(RF) = W(F) \text{ for all } F \in \R^{d \times d}, R \in SO(d),\notag\\
{\rm (iii)} & \ \ W(F) \ge c\dist^2(F,SO(d)) \ \text{ for all $F \in \R^{d \times d}$}, \  W(F) = 0 \text{ iff } F \in SO(d).
\end{align}
We briefly note that we can also treat inhomogeneous materials where the energy density has the form $W: \Omega \times \R^{d \times d} \to [0,+\infty)$. Moreover, it suffices to assume $W \in C^{2,\alpha}$, where $C^{2,\alpha}$ is the H\"older space with exponent $\alpha \in (0,1]$, see Remark \ref{rem: Hoelder space} for details.

Let $\kappa>0$ and  $\beta \in (\frac{2}{3},1)$. \EEE For  $\eps >0$, define the energy $\mathcal{E}_\eps(\cdot,\Omega) : GSBV^2_2(\Omega;\R^d) \to [0,+\infty]$ by
\begin{align}\label{rig-eq: Griffith en}
\mathcal{E}_\eps(y,\Omega) =  \begin{cases} \eps^{-2}\int_{\Omega} W(\nabla y(x)) \,dx +\eps^{-2\beta} \int_\Omega |\nabla^2 y(x)|^2 \, dx  +  \kappa\mathcal{H}^{d-1}(J_y)   & \text{ if $J_{\nabla y} \subset J_y$,} \\ + \infty & \text{ else.} \end{cases}
\end{align}
Here and in the following, the inclusion  $J_{\nabla y} \subset J_y$ has to be understood up to an $\mathcal{H}^{d-1}$-negligible set.  Since $W$ grows quadratically around $SO(d)$, the parameter $\eps$ corresponds to the typical scaling of strains for configurations with finite energy. 

Due to the presence of the second term, we deal with a Griffith-type \BBB model \EEE  for \emph{nonsimple materials}. As explained in the introduction,   elastic energies which depend additionally on the second gradient of the deformation were introduced by {\sc Toupin} \cite{Toupin:62,Toupin:64} to \BBB  enhance    compactness and rigidity properties. \EEE  In the present context, we add  a second gradient term for a material undergoing fracture. This regularization effect  acts on the \RRR entire \EEE \emph{intact region} $\Omega \setminus \ZZZ J_y \EEE $ of the material. This is modeled by the condition $J_{\nabla y} \subset J_y$. 

The goal of this contribution is twofold. We first show that the regularization allows to prove existence of minimizers for boundary value problems without convexity   properties of $W$.  The main result of the present work is then to identify  a  linearized Griffith energy in the small strain limit $\eps \to 0$  which is related to the nonlinear energies $\mathcal{E}_\eps$ through $\Gamma$-convergence. We point out that the effective limit is a   `standard' Griffith model in linearized elasticity depending only on the first gradient,   see \eqref{rig-eq: Griffith en-lim} below,   although we start \BBB with \EEE a nonlinear model for nonsimple materials.

 We observe  that the condition $J_{\nabla y} \subset J_y$ is not closed under convergence in measure on $\Omega$. \ZZZ In fact, consider, e.g., $\Omega = (-1,1)^2, \Omega_1 = (-1,0)\times (-1,1), \Omega_2=(0,1)\times (-1,1)$, and for $\delta \ge 0$ the configurations
 $$y_\delta(x_1,x_2) = (x_1,x_2) \chi_{\Omega_1} + (2x_1 + \delta,x_2)\chi_{\Omega_2} \ \ \ \ \ \ \text{for } (x_1,x_2) \in \Omega.    $$
 Then $J_{\nabla y_\delta} = J_{y_\delta} = \lbrace 0\rbrace \times (-1,1)$    for $\delta>0$ and $y_\delta \to y_0$ in measure on $\Omega$ as $\delta \to 0$. However, there holds $\emptyset = J_{y_0} \subset J_{\nabla y_0} = \lbrace 0\rbrace \times (-1,1)$. \EEE   Therefore, we need to pass to a relaxed formulation.

\begin{proposition}[Relaxation]\label{prop: relaxation}
Let $\Omega \subset \R^d$ be open and bounded. Suppose that $W$ satisfies \eqref{assumptions-W}. Then the relaxed functional $\overline{\mathcal{E}}_\eps(\cdot,\Omega): GSBV^2_2(\Omega;\R^d) \to [0,+\infty] $ defined by 
$$\overline{\mathcal{E}}_\eps(y,\Omega) = \inf \big\{ \liminf\nolimits_{n \to \infty} \mathcal{E}_\eps(y_n,\Omega): y_n \to y \ \text{\rm  in measure on $\Omega$} \big\}$$ is given by
\begin{align}\label{eq: relaxed energy}
\overline{\mathcal{E}}_\eps(y,\Omega) =  \eps^{-2}\int_{\Omega} W(\nabla y(x)) \,dx +\eps^{-2\beta} \int_\Omega |\nabla^2 y(x)|^2 \, dx  +  \kappa\mathcal{H}^{d-1}(J_y \cup J_{\nabla y}).
\end{align}
\end{proposition}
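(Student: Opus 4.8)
\emph{Strategy.} Denote by $\mathcal{F}_\eps(\cdot,\Omega)$ the functional appearing on the right-hand side of \eqref{eq: relaxed energy}. Since $\overline{\mathcal{E}}_\eps(\cdot,\Omega)$ is by construction the sequential lower semicontinuous envelope of $\mathcal{E}_\eps(\cdot,\Omega)$ with respect to convergence in measure on $\Omega$, the plan is to establish separately the liminf inequality $\overline{\mathcal{E}}_\eps\geq\mathcal{F}_\eps$ and the existence of recovery sequences, $\overline{\mathcal{E}}_\eps\leq\mathcal{F}_\eps$, on $GSBV^2_2(\Omega;\R^d)$. As a consistency check, note that $\mathcal{F}_\eps\leq\mathcal{E}_\eps$ pointwise: the two functionals coincide whenever $J_{\nabla y}\subset J_y$ up to an $\mathcal{H}^{d-1}$-negligible set, while $\mathcal{E}_\eps=+\infty$ in the opposite case.

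\emph{Lower bound.} I would start from a sequence $y_n\to y$ in measure with $\liminf_n\mathcal{E}_\eps(y_n,\Omega)<+\infty$ (otherwise there is nothing to show); after passing to a subsequence one may assume the lower limit is a limit and $\mathcal{E}_\eps(y_n,\Omega)\leq\Lambda<+\infty$ for all $n$, so that $J_{\nabla y_n}\subset J_{y_n}$ and hence $\mathcal{H}^{d-1}(J_{y_n}\cup J_{\nabla y_n})=\mathcal{H}^{d-1}(J_{y_n})$. By \eqref{assumptions-W}(iii), $\int_\Omega\dist^2(\nabla y_n,SO(d))\,dx\leq c^{-1}\eps^2\Lambda$, so $\nabla y_n$ is bounded in $L^2(\Omega;\R^{d\times d})$; moreover $\nabla^2 y_n$ is bounded in $L^2$ and $\mathcal{H}^{d-1}(J_{y_n})+\mathcal{H}^{d-1}(J_{\nabla y_n})\leq 2\Lambda/\kappa$. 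Theorem \ref{th: comp} then provides $y\in GSBV^2_2(\Omega;\R^d)$, $\nabla y_n\to\nabla y$ in measure (hence $\mathcal{L}^d$-a.e.\ along a further subsequence), $\nabla^2 y_n\weakly\nabla^2 y$ in $L^2$, and $\mathcal{H}^{d-1}(J_y\cup J_{\nabla y})\leq\liminf_n\mathcal{H}^{d-1}(J_{y_n}\cup J_{\nabla y_n})$. Combining Fatou's lemma for the nonnegative continuous integrand $W(\nabla y_n)$, weak $L^2$-lower semicontinuity of $v\mapsto\int_\Omega|v|^2$, and the surface estimate yields $\mathcal{F}_\eps(y,\Omega)\leq\liminf_n\mathcal{E}_\eps(y_n,\Omega)$; taking the infimum over admissible sequences gives $\overline{\mathcal{E}}_\eps(y,\Omega)\geq\mathcal{F}_\eps(y,\Omega)$.

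\emph{Recovery sequence.} For $y$ with $\mathcal{F}_\eps(y,\Omega)<+\infty$ — in particular $\mathcal{H}^{d-1}(J_y\cup J_{\nabla y})<+\infty$ — the case $\mathcal{H}^{d-1}(J_{\nabla y}\setminus J_y)=0$ is settled by the constant sequence, so I would assume $\Gamma:=J_{\nabla y}\setminus J_y$ has positive finite $\mathcal{H}^{d-1}$-measure. The idea is to add to $y$ a locally constant perturbation of vanishing amplitude whose jump set is, up to an arbitrarily small surface error, exactly $\Gamma$: I would construct, for every $n$, a locally constant map $v_n\colon\Omega\to\R^d$ with $\|v_n\|_{L^\infty(\Omega)}\to 0$, with $\Gamma\subset J_{v_n}\subset \Gamma\cup J_y$ up to $\mathcal{H}^{d-1}$-negligible sets, and with $\mathcal{H}^{d-1}(J_{v_n})\leq\mathcal{H}^{d-1}(\Gamma)+\tfrac1n$. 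Then $y_n:=y+v_n$ has $\nabla y_n=\nabla y$ and $\nabla^2 y_n=\nabla^2 y$ $\mathcal{L}^d$-a.e.\ and $y_n\to y$ uniformly; choosing the values of $v_n$ outside an exceptional null set so that no jump of $y$ is cancelled one gets $J_{y_n}=J_y\cup J_{v_n}$ up to $\mathcal{H}^{d-1}$-negligible sets, whence $J_{\nabla y_n}=J_{\nabla y}\subset J_y\cup\Gamma\subset J_{y_n}$ and $\mathcal{E}_\eps(y_n,\Omega)=\eps^{-2}\int_\Omega W(\nabla y)+\eps^{-2\beta}\int_\Omega|\nabla^2 y|^2+\kappa\mathcal{H}^{d-1}(J_y\cup J_{v_n})\to\mathcal{F}_\eps(y,\Omega)$, using $\mathcal{H}^{d-1}(\Gamma\setminus J_y)=\mathcal{H}^{d-1}(\Gamma)$; this gives $\overline{\mathcal{E}}_\eps(y,\Omega)\leq\mathcal{F}_\eps(y,\Omega)$.

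\emph{The main obstacle.} The delicate point, and the place where the second-gradient regularization is genuinely used, is the construction of the perturbations $v_n$: one must realize $\Gamma$ (up to a small surface error, and modulo the part already lying in $J_y$, which is for free) as the jump set of a Caccioppoli partition \emph{without doubling its $\mathcal{H}^{d-1}$-measure}. The key structural input is that, since $y$ is approximately continuous across $\Gamma\subset J_{\nabla y}\setminus J_y$, the Hadamard compatibility condition forces the trace jump $[\nabla y]$ to be rank-one $\mathcal{H}^{d-1}$-a.e.\ on $\Gamma$, and — crucially — the bound $\int_\Omega|\nabla^2 y|^2<+\infty$ rules out a relative boundary of $\Gamma$ inside $\Omega\setminus J_y$: near such a boundary $\nabla y$ would have to heal between its two distinct traces across a set of codimension two, which forces $|\nabla^2 y|\gtrsim\dist(\cdot,\partial\Gamma)^{-1}$ and hence $\nabla^2 y\notin L^2$. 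Consequently $\Gamma$ is, up to an $\mathcal{H}^{d-1}$-negligible set, a countable union of $C^1$ hypersurface pieces that are closed relative to $\Omega$ modulo $J_y$, and an exhaustion by finitely many such pieces together with elementary separation arguments produces sets $E^1_n,\dots,E^{k_n}_n$ of finite perimeter with $\Gamma\subset\bigcup_j\partial^*E^j_n$ up to an $\mathcal{H}^{d-1}$-error $\leq\tfrac1n$ and $\sum_j\mathcal{H}^{d-1}(\partial^*E^j_n\cap\Omega)\leq\mathcal{H}^{d-1}(\Gamma)+\tfrac1n$; the maps $v_n:=\sum_j t^j_n\chi_{E^j_n}$ with generic small coefficients $t^j_n$ then have the required properties, and the residual $\tfrac1n$-errors are removed by a diagonal argument.
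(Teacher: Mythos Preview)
Your lower bound is essentially right, though you want Theorem~\ref{th: GSBV2 comp} (with the $\psi$-bound coming from convergence in measure, cf.\ \cite[Remark~2.2]{Solombrino}) rather than Theorem~\ref{th: comp}, which requires boundary data on a larger domain and outputs a \emph{modified} sequence $(z_n)_n$ rather than the original one.

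The recovery-sequence construction has a genuine gap. Your ``main obstacle'' paragraph asserts that $\nabla^2 y\in L^2$ forces $\Gamma=J_{\nabla y}\setminus J_y$ to have no relative boundary in $\Omega\setminus J_y$ via a blow-up $|\nabla^2 y|\gtrsim\dist(\cdot,\partial\Gamma)^{-1}$, and then that $\Gamma$ can be realized, up to small error, as the boundary of a Caccioppoli partition without doubling its measure. None of this is rigorous: ``relative boundary'' of a generic $\mathcal{H}^{d-1}$-rectifiable set is not a well-defined notion; the blow-up estimate is a heuristic (nothing rules out degeneration of the jump amplitude, and the geometry of $\Gamma$ need not resemble a finite union of $C^1$ pieces closed in $\Omega\setminus J_y$); and the final ``elementary separation arguments'' producing the sets $E^j_n$ are not substantiated. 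The paper sidesteps all of this. For a generic $\nu\in S^{d-1}$ the function $v:=\nabla y\cdot\nu\in GSBV^2(\Omega;\R^d)\subset GSBD^2(\Omega)$ satisfies $J_v=J_{\nabla y}$ up to an $\mathcal{H}^{d-1}$-null set, and the density result Theorem~\ref{th: crismale-density} produces $v_n\in W^{2,\infty}(\Omega\setminus J_{v_n};\R^d)$ with $\mathcal{H}^{d-1}(J_{v_n}\triangle J_{\nabla y})\to 0$. Setting $z_n:=y+\eta_n v_n$ with $\eta_n\to 0$ generic (so that $J_{z_n}=J_y\cup J_{v_n}$) gives $J_{\nabla z_n}\setminus J_{z_n}\subset J_v\setminus J_{v_n}$, an $\mathcal{H}^{d-1}$-small set which is then excised by replacing $z_n$ with $\id+b_n$ on a ball-covering $E_n$ with $\mathcal{H}^{d-1}(\partial^*E_n)\to 0$. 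The density theorem does the geometric work that your structural claim about $\Gamma$ was meant to do, and it needs no information about the second gradient beyond its presence in the energy.
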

The result is proved in Subsection \ref{sec: proofs-nonlinear}. Clearly, $\overline{\mathcal{E}}_\eps(\cdot,\Omega)$ is  lower semicontinuous with respect to the convergence in measure. \RRR We point out that this latter property \EEE has essentially been shown in \cite{Carriero2}, cf.\ Theorem \ref{th: GSBV2 comp}.

In the following, our goal is to study boundary value problems. To this end, we suppose that there exist two bounded Lipschitz domains $\Omega' \supset \Omega$. We will impose Dirichlet boundary data on  $\partial_D \Omega := \Omega' \cap \partial \Omega$. As usual for the weak formulation in the framework of free discontinuity problems, this will be done by requiring that configurations $y$ satisfy $y = g$ on $\Omega' \setminus \overline{\Omega}$ for some  $g \in W^{2,\infty}(\Omega';\R^d)$.   From  now on, we write $\mathcal{E}_\eps(\cdot) = \mathcal{E}_\eps(\cdot,\Omega')$ and $\overline{\mathcal{E}}_\eps(\cdot) = \overline{\mathcal{E}}_\eps(\cdot,\Omega')$ for notational convenience.  The following result about existence of minimizers will be proved in Subsection \ref{sec: proofs-nonlinear}.

\begin{theorem}[Existence of minimizers]\label{th: existence}
Let $\Omega \subset \Omega' \subset \R^d$ be bounded Lipschitz domains. Suppose that $W$ satisfies \eqref{assumptions-W}, \BBB and let \EEE  $g \in  W^{2,\infty}(\Omega';\R^d)$. Then the minimization problem 
\begin{align}\label{eq: minimization problem}
\inf_{{y \in GSBV^2_2(\Omega';\R^d)}} \Big\{ \overline{\mathcal{E}}_\eps(y): \  y = g \text{ on } \Omega' \setminus \overline{\Omega} \Big\}
\end{align}
admits solutions. 
\end{theorem}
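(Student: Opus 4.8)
The plan is to apply the direct method of the calculus of variations, using the relaxed functional $\overline{\mathcal{E}}_\eps$ from Proposition \ref{prop: relaxation} together with the compactness and lower semicontinuity results for $GSBV^2_2$ announced in Theorem \ref{th: comp} (relying on \cite{Carriero2, Manuel}). First I would fix a minimizing sequence $(y_n) \subset GSBV^2_2(\Omega';\R^d)$ with $y_n = g$ on $\Omega'\setminus\overline\Omega$ and $\overline{\mathcal{E}}_\eps(y_n) \le C$ uniformly; such sequences exist since the competitor $y \equiv g$ has finite energy (here $g \in W^{2,\infty}$ is used: $\nabla g, \nabla^2 g \in L^\infty \subset L^2$ on the bounded set $\Omega'$, and $J_g = J_{\nabla g} = \emptyset$). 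From the energy bound we read off $\int_{\Omega'} W(\nabla y_n) \le C\eps^2$, $\int_{\Omega'}|\nabla^2 y_n|^2 \le C\eps^{2\beta}$, and $\mathcal{H}^{d-1}(J_{y_n}\cup J_{\nabla y_n}) \le C/\kappa$. The coercivity assumption \eqref{assumptions-W}(iii), $W(F)\ge c\,\dist^2(F,SO(d))$, combined with the elementary bound $|F| \le \dist(F,SO(d)) + \sqrt d$, yields a uniform $L^2$ bound on $\nabla y_n$ on $\Omega'$; together with the bounds on $\nabla^2 y_n$ and on the jump sets this puts $(\nabla y_n)$ into a set that is precompact with respect to convergence in measure by the $GSBV$ compactness theorem, and an application to $(y_n)$ itself (noting $\nabla y_n$ is now controlled) gives, up to a subsequence, $y_n \to y$ and $\nabla y_n \to \nabla y$ in measure on $\Omega'$ for some $y \in GSBV^2_2(\Omega';\R^d)$, with the lower semicontinuity $\mathcal{H}^{d-1}(J_y \cup J_{\nabla y}) \le \liminf_n \mathcal{H}^{d-1}(J_{y_n}\cup J_{\nabla y_n})$ and weak $L^2$ lower semicontinuity of the Hessian term.

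Next I would pass to the limit in each term of $\overline{\mathcal{E}}_\eps$ separately. The surface term is handled by the lower semicontinuity just quoted. For the second-gradient term, the uniform bound $\|\nabla^2 y_n\|_{L^2} \le C$ gives (along a further subsequence) $\nabla^2 y_n \weakly \nabla^2 y$ in $L^2(\Omega';\R^{d\times d\times d})$ — the identification of the weak limit with the approximate Hessian of the limit $y$ follows from the $GSBV^2_2$ compactness machinery — so $\int_{\Omega'}|\nabla^2 y|^2 \le \liminf_n \int_{\Omega'}|\nabla^2 y_n|^2$ by convexity. For the bulk term, since $\nabla y_n \to \nabla y$ in measure and $W \ge 0$ is continuous (and in fact lower semicontinuous everywhere on $\R^{d\times d}$), Fatou's lemma for convergence in measure gives $\int_{\Omega'} W(\nabla y) \le \liminf_n \int_{\Omega'} W(\nabla y_n)$; note no (quasi)convexity of $W$ is needed here precisely because the $L^2$ control of $\nabla^2 y_n$ upgrades weak to strong (in measure) convergence of the gradients, which is the whole point of the nonsimple regularization. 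Adding the three inequalities yields $\overline{\mathcal{E}}_\eps(y) \le \liminf_n \overline{\mathcal{E}}_\eps(y_n) = \inf$, so $y$ is a minimizer.

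The one point requiring genuine care — and the main obstacle — is checking that the limit $y$ is \emph{admissible}, i.e.\ that the Dirichlet condition $y = g$ on $\Omega'\setminus\overline\Omega$ survives the passage to the limit, and that the pair $(y,\nabla y)$ indeed lies in $GSBV^2_2(\Omega';\R^d)$ with the expected coincidence of approximate differentials. The boundary condition is preserved because $y_n \to y$ in measure on all of $\Omega'$ while $y_n \equiv g$ on the fixed open set $\Omega'\setminus\overline\Omega$, forcing $y = g$ a.e.\ there; this is exactly why the boundary datum is incorporated on the collar $\Omega'\setminus\overline\Omega$ rather than as a trace on $\partial\Omega$, avoiding any trace-continuity issue. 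The more delicate structural question — that the subsequential limits of $y_n$ and $\nabla y_n$ fit together so that $\nabla(\lim y_n) = \lim \nabla y_n$ and $\nabla^2(\lim y_n) = \mathrm{w\text{-}}\lim \nabla^2 y_n$ in the approximate-differential sense, with $J_{\nabla y} \subset J_y \cup J_{\nabla y}$ consistent with the relaxed constraint in \eqref{eq: relaxed energy} — is precisely the content of the compactness statement Theorem \ref{th: comp}, which I would invoke directly; with that in hand the argument above is complete, and the use of $\overline{\mathcal{E}}_\eps$ rather than $\mathcal{E}_\eps$ is what makes the relaxed constraint $J_{\nabla y}\subset J_y\cup J_{\nabla y}$ (automatically satisfied) the right closed condition, as motivated by the example preceding Proposition \ref{prop: relaxation}.
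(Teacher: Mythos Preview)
There is a genuine gap. You claim that, once $\nabla y_n$ is controlled in $L^2$ and the jump sets are bounded, an application of $GSBV$ compactness to $(y_n)$ itself yields $y_n \to y$ in measure on $\Omega'$. This is not justified: Ambrosio's compactness theorem requires an a priori bound of the form $\sup_n \int_{\Omega'} \psi(|y_n|)\,dx < +\infty$, which is \emph{not} implied by the energy bound even with the Dirichlet condition on $\Omega'\setminus\overline\Omega$. Indeed, take $y_n = g$ on $\Omega'\setminus B$ and $y_n \equiv n$ on a fixed ball $B \subset\subset \Omega$; then all the quantities you control stay bounded (the only surface contribution is $\mathcal{H}^{d-1}(\partial B)$), yet $(y_n)$ has no subsequence converging in measure. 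Your later remark that ``this is precisely the content of Theorem~\ref{th: comp}'' does not close the gap, because Theorem~\ref{th: comp} does \emph{not} assert convergence of $(y_n)$: it produces \emph{modifications} $(z_n)$, obtained by subtracting piecewise constants and replacing $y_n$ by $g$ on small sets, such that $z_n \to y$ in measure.

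Once you pass to the modifications $(z_n)$, a second step is still missing from your outline: you must check that $(z_n)$ is again a minimizing sequence before applying lower semicontinuity. This is where the precise structure of Theorem~\ref{th: comp} and the hypothesis $g\in W^{2,\infty}(\Omega';\R^d)$ are actually used: on the sets $S_n = \{z_n \neq y_n\}$ one has $z_n = g$, so the bulk and Hessian contributions there are controlled by $\mathcal{L}^d(S_n)\to 0$ times constants depending on $W$ and $\|\nabla g\|_{L^\infty}, \|\nabla^2 g\|_{L^\infty}$, while the excess surface energy is controlled by $\mathcal{H}^{d-1}\big((J_{z_n}\cup J_{\nabla z_n})\setminus(J_{y_n}\cup J_{\nabla y_n})\big)\to 0$ from \eqref{eq: compi1-new}. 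Only after this estimate does $\overline{\mathcal{E}}_\eps(y) \le \liminf_n \overline{\mathcal{E}}_\eps(z_n) \le \liminf_n \overline{\mathcal{E}}_\eps(y_n)$ follow. Your term-by-term lower semicontinuity argument is otherwise fine, and in fact the paper packages it as the statement that $\overline{\mathcal{E}}_\eps$ is lower semicontinuous under convergence in measure (Step~1 of the proof of Proposition~\ref{prop: relaxation}).
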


\subsection{Compactness of rescaled displacement fields}

The main goal of the present work is the identification of an effective linearized Griffith energy in the small strain limit. In this subsection, we formulate  the relevant compactness result. Let $\Omega' \supset \Omega$ be bounded Lipschitz domains.  The limiting energy is defined on the space of generalized special functions of bounded deformation $GSBD^2(\Omega')$. For basic properties of $GSBD^2(\Omega')$ we refer to \cite{DalMaso:13} and  Section \ref{sec: GSBD} below. In particular, for $u\in GSBD^2(\Omega')$, we denote by $e(u) = \frac{1}{2}(\nabla u^T + \nabla u )$  the approximate symmetric differential and by $J_u$ the jump set.

\RRR The general idea in linearization results \EEE in many different settings (see, e.g., \cite{alicandro.dalmaso.lazzaroni.palombaro, Braides-Solci-Vitali:07, DalMasoNegriPercivale:02, MFMK, FriedrichSchmidt:2014.2, NegriToader:2013, Schmidt:08, Schmidt:2009})  is the following: given a sequence $(y_\eps)_\eps$ with $\sup_\eps \mathcal{E}_\eps(y_\eps) < +\infty$, define  displacement fields which measure the distance of the deformations from the identity, rescaled by the \ZZZ small parameter \EEE $\eps$, i.e.,
\begin{align}\label{eq: rescali1}
u_\eps = \frac{1}{\eps}(y_\eps - \id).
\end{align}
It turns out, however, that in general no compactness can be expected if the body may undergo fracture. Consider, e.g., the functions $y_\eps = \id \chi_{\Omega' \setminus B} + R\,x \chi_{B}$, for a small ball  $B \subset \Omega$ and a rotation $R \in SO(d)$, $R \neq \Id$. Then $|u_\eps|, |\nabla u_\eps| \to \infty$ on $B$ as $\eps \to 0$. The main idea in our approach is the observation that this phenomenon can be avoided  if the deformation is \emph{rotated back to the identity} on the set $B$. This will be made precise  in Theorem \ref{thm: compactess}(a) below where we pass to \emph{piecewise rotated functions}. For such functions, we can control at least the symmetric part  of $\nabla u_\eps$ for the rescaled displacement fields defined \BBB in \EEE \eqref{eq: rescali1}. This will allow us to derive a compactness result in the space $GSBD^2(\Omega')$, see Theorem \ref{thm: compactess}(b).

Recall the definition of $GSBV_2^2(\Omega';\R^d)$ in \eqref{eq: space}. To account for boundary data $h \in W^{2,\infty}(\Omega';\R^d)$, we introduce the spaces
\begin{align}\label{eq: boundary-spaces}
\mathcal{S}_{\eps,h} & = \lbrace y \in GSBV_2^2(\Omega';\R^d): \ y = \id + \eps h \text{ on } \Omega' \setminus \overline{\Omega} \rbrace, \notag\\
GSBD^2_h & = \lbrace u \in GSBD^2(\Omega'): \, u = h \text{ on } \Omega' \setminus \overline{\Omega} \rbrace.
\end{align}
Recall  $\beta \in (\frac{2}{3},1)$    and the definition of $\overline{\mathcal{E}}_\eps = \overline{\mathcal{E}}_\eps(\cdot,\Omega')$ in \eqref{eq: relaxed energy}.
 For definition and basic properties of Caccioppoli partitions we refer to Section \ref{sec: Caccio}.   In particular, for a set of finite perimeter $E \subset \Omega'$, we denote by $\partial^* E$ its essential boundary \BBB and by  $(E)^1$ the points where $E$ has density one, see \cite[Definition 3.60]{Ambrosio-Fusco-Pallara:2000}. \EEE

\begin{theorem}[Compactness]\label{thm: compactess}
Let $\gamma \in (\frac{2}{3},\beta)$. Assume that $W$ satisfies \eqref{assumptions-W}, \BBB and let \EEE  $h \in W^{2,\infty}(\Omega';\R^d)$. Let $(y_\eps)_\eps $ be a sequence satisfying $y_\eps \in \mathcal{S}_{\eps,h}$  and $\sup_\eps \overline{\mathcal{E}}_\eps(y_\eps) <+\infty$. 

\noindent (a) (Piecewise rotated functions) There exist Caccioppoli partitions $(P_j^\eps)_j$ of $\Omega'$ and corresponding rotations $(R^\eps_j)_j \subset SO(d)$ such that the piecewise  rotated functions $y^{\rm rot}_\eps \in GSBV^2_2(\Omega';\R^d)$ given by
\begin{align}\label{eq: piecewise rotated}
y^{\rm rot}_\eps    :=      \sum\nolimits_{j=1}^\infty R^\eps_j \,  y_\eps \,  \chi_{P^\eps_j} 
\end{align}
satisfy
\begin{align}\label{eq: first conditions}
{\rm (i)} & \ \ \text{$y^{\rm rot}_\eps = \id + \eps h$ on $\Omega' \setminus \overline{\Omega}$,}\notag \\ 
{\rm (ii)} & \ \  \mathcal{H}^{d-1}\big( \big(J_{y_\eps^{\rm rot}} \cup  J_{\nabla y_\eps^{\rm rot}} \big)  \setminus \big( J_{y_\eps} \cup J_{\nabla y_\eps}\big)\big) \le   \mathcal{H}^{d-1}\Big( \Big( \Omega' \cap \bigcup\nolimits_{j=1}^\infty \partial^* P_j^\eps \Big)\setminus J_{  \nabla y_\eps} \Big) \le C\eps^{\beta-\gamma}, \notag \\
{\rm (iii)}  & \ \    \Vert {\rm sym} (\nabla y_\eps^{\rm rot}) -\Id\Vert_{L^2(\Omega')} \le C\eps,\notag \\ 
{\rm (iv)}  & \ \    \Vert  \nabla y_\eps^{\rm rot} - \Id \Vert_{L^2(\Omega')} \le C\eps^\gamma
\end{align}
for a constant $C>0$ independent of $\eps$.

\noindent (b)(Compactness of rescaled displacement fields) There exists a subsequence (not relabeled) and  a function $u \in GSBD^2_h$   such that the rescaled displacement fields $u_\eps \in GSBV^2_2(\Omega';\R^d)$ defined by
\begin{align}\label{eq: rescalidipl}
u_\eps:  = \frac{1}{\eps} (y^{\rm rot}_\eps - \id) 
\end{align}     
satisfy
\begin{align}\label{eq: the main convergence}
{\rm (i)} & \ \ u_\eps \to u \ \ \ \text{a.e.\ in $\Omega' \setminus E_u$},\notag \\
{\rm (ii)} & \ \ e(u_\eps) \rightharpoonup e(u)  \ \ \text{weakly in $L^2(\Omega' \setminus E_u;\R^{d\times d}_{\rm sym})$},\notag\\
 {\rm (iii)} & \ \   \mathcal{H}^{d-1}(J_{u}) \le \liminf\nolimits_{\eps \to 0}  \mathcal{H}^{d-1}(J_{u_\eps}) \le \liminf\nolimits_{\eps \to 0}  \mathcal{H}^{d-1}(J_{y_\eps} \cup J_{\nabla y_\eps}),\notag\\
 {\rm (iv)} & \ \ e(u) = 0 \ \ \text{ on } \ E_u, \ \ \ \mathcal{H}^{d-1}\big((\partial^* E_u \BBB \cap \Omega' \EEE )\setminus J_u \big) = \mathcal{H}^{d-1}(J_u \cap \BBB (E_u)^1 \EEE ) = 0,
\end{align}
where $E_u := \lbrace x\in \Omega: \, |u_\eps(x)| \to \infty \rbrace$ is a set of finite perimeter. 
 \end{theorem}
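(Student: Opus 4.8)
## Proof proposal for Theorem \ref{thm: compactess}

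\textbf{Overall strategy.} The plan is to exploit the second-gradient term, which controls $\nabla^2 y_\eps$ in $L^2$ up to a surface energy $\kappa \mathcal H^{d-1}(J_{y_\eps}\cup J_{\nabla y_\eps})$, in order to show that $\nabla y_\eps$ is, away from a small jump set, \emph{close to a piecewise constant rotation field}. The key point is that an $L^2$-control on $\nabla^2 y_\eps$ of size $\eps^{2\beta}$ together with an $L^2$-control on $\mathrm{dist}(\nabla y_\eps, SO(d))$ of size $\eps^2$ (from $\eps^{-2}\int W(\nabla y_\eps)\le C$ and hypothesis (iii) on $W$) should force $\nabla y_\eps$ to be nearly constant on most of $\Omega'$, at a scale we can make quantitative through the coarea formula applied to the function $x\mapsto \nabla y_\eps(x)$. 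Once $\nabla y_\eps$ is piecewise close to rotations $R^\eps_j$ on a Caccioppoli partition $(P_j^\eps)_j$, defining $y^{\rm rot}_\eps$ as in \eqref{eq: piecewise rotated} and $u_\eps$ as in \eqref{eq: rescalidipl}, the compactness in $GSBD^2$ follows from the standard $GSBD$ compactness theorem (the Dal Maso / Chambolle--Conti--Iurlano framework), applied to $u_\eps$ whose symmetric gradient is bounded in $L^2$ by (iii).

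\textbf{Step 1: Building the partition via the coarea formula.} Starting from $\sup_\eps \overline{\mathcal E}_\eps(y_\eps)<+\infty$ we get $\int_{\Omega'} |\nabla^2 y_\eps|^2 \le C\eps^{2\beta}$ and $\mathcal H^{d-1}(J_{y_\eps}\cup J_{\nabla y_\eps})\le C$. On the intact region, $\nabla y_\eps \in SBV^2(\Omega';\R^{d\times d})$ with jump set inside $J_{y_\eps}\cup J_{\nabla y_\eps}$, so $\nabla y_\eps$ is a $BV$-like object whose distributional derivative has absolutely continuous part of small $L^2$-mass $\le C\eps^\beta$ and jump part of bounded measure. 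The idea is to compose with a suitable Lipschitz projection near $SO(d)$ (legitimate by (i) on $W$ and the $\eps^2$-closeness), then use the coarea formula for the $BV$ map $\nabla y_\eps$: for a.e.\ choice of a discretization mesh in $SO(d)$ of size $\sim\eps^{\gamma}$, the preimages of the mesh cells form a Caccioppoli partition $(P_j^\eps)_j$ whose total interface satisfies, by a mean-value/Chebyshev argument on the coarea integral,
\begin{align*}
\mathcal H^{d-1}\Big(\big(\Omega'\cap\bigcup\nolimits_j \partial^* P_j^\eps\big)\setminus J_{\nabla y_\eps}\Big) \le C\,\eps^{-\gamma}\int_{\Omega'}|\nabla^2 y_\eps| \le C\,\eps^{-\gamma}\,\eps^{\beta}= C\eps^{\beta-\gamma},
\end{align*}
using $\gamma<\beta$. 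On each $P_j^\eps$ the rotation part of $\nabla y_\eps$ lies within an $\eps^\gamma$-cell, so there is $R^\eps_j\in SO(d)$ with $\|\nabla y_\eps - R^\eps_j\|_{L^\infty(P_j^\eps\cap\{\text{good set}\})}\lesssim \eps^\gamma$; combined with the $\eps^2$-control of $\mathrm{dist}(\nabla y_\eps,SO(d))$ one upgrades this to an $L^2$-statement $\|\nabla y_\eps-R^\eps_j\|_{L^2(P^\eps_j)}\lesssim \eps^\gamma$ after discarding a set of small measure (absorbed into the surface term). Choosing the cell containing $\Id$ to be $P_1^\eps$ and using $y_\eps=\id+\eps h$ on $\Omega'\setminus\overline\Omega$ gives $R^\eps_1=\Id$ there, hence (i). This yields (ii) and (iv) of part (a); (iii) then follows because $\mathrm{sym}(R^\eps_j - \Id) = \mathrm{sym}(R^\eps_j) - \Id$ is of \emph{second} order in $|R^\eps_j-\Id|$ (since $R^TR=\Id$ gives $\mathrm{sym}(R-\Id)=-\tfrac12(R-\Id)^T(R-\Id)$), so the worse-than-$\eps$ error in (iv) is squared and, together with the genuine $\eps^2$-smallness of $\mathrm{sym}(\nabla y_\eps)-\mathrm{sym}(R^\eps_j)$ coming from $\mathrm{dist}^2(\nabla y_\eps,SO(d))\le C\eps^2$, produces the $O(\eps)$ bound on $\|\mathrm{sym}(\nabla y^{\rm rot}_\eps)-\Id\|_{L^2}$.

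\textbf{Step 2: Compactness of $u_\eps$ in $GSBD^2$.} With $u_\eps=\frac1\eps(y^{\rm rot}_\eps-\id)$ we have $e(u_\eps)=\frac1\eps(\mathrm{sym}(\nabla y^{\rm rot}_\eps)-\Id)$ bounded in $L^2(\Omega')$ by (a)(iii), $\mathcal H^{d-1}(J_{u_\eps})\le \mathcal H^{d-1}(J_{y^{\rm rot}_\eps}\cup J_{\nabla y^{\rm rot}_\eps}) \le \mathcal H^{d-1}(J_{y_\eps}\cup J_{\nabla y_\eps}) + C\eps^{\beta-\gamma}$ bounded (using (a)(ii)), and $u_\eps=h$ on $\Omega'\setminus\overline\Omega$. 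This is precisely the hypothesis set of the $GSBD^2$ compactness theorem, which yields a subsequence, a set of finite perimeter $E_u=\{x:|u_\eps(x)|\to\infty\}$, and $u\in GSBD^2(\Omega')$ with $u_\eps\to u$ a.e.\ on $\Omega'\setminus E_u$, $e(u_\eps)\rightharpoonup e(u)$ weakly in $L^2$, and the lower semicontinuity $\mathcal H^{d-1}(J_u)\le\liminf_\eps \mathcal H^{d-1}(J_{u_\eps})$, together with the structural properties $e(u)=0$ on $E_u$, $\mathcal H^{d-1}((\partial^* E_u\cap\Omega')\setminus J_u)=\mathcal H^{d-1}(J_u\cap (E_u)^1)=0$ — these last are exactly the output of that compactness result. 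The boundary condition $u=h$ on $\Omega'\setminus\overline\Omega$ passes to the limit (so $E_u\subset\Omega$ and $u\in GSBD^2_h$) because convergence is a.e.\ there and $h$ is fixed. Chaining the inequalities in (b)(iii) uses (a)(ii) once more to pass from $J_{y^{\rm rot}_\eps}$ back to $J_{y_\eps}\cup J_{\nabla y_\eps}$ in the $\liminf$ (the $\eps^{\beta-\gamma}$ error vanishes).

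\textbf{Main obstacle.} The delicate point is Step 1: making the coarea-based partition argument genuinely quantitative while keeping the partition a bona fide Caccioppoli partition whose interfaces — away from $J_{\nabla y_\eps}$ — cost only $O(\eps^{\beta-\gamma})$ in $\mathcal H^{d-1}$, and simultaneously guaranteeing that on each piece $\nabla y_\eps$ is $L^2$-close to a \emph{single rotation} (not merely close to $SO(d)$ as a set). The slicing level set must be chosen to avoid the jump set $J_{\nabla y_\eps}$ and to have finite-perimeter fibers, and the projection onto a neighborhood of $SO(d)$ has to be handled carefully where $\nabla y_\eps$ is far from $SO(d)$ (this is where one pays measure, controlled again by $\mathrm{dist}^2(\nabla y_\eps,SO(d))\le C\eps^2$, hence a set of measure $O(\eps^{2-2\gamma})$, negligible as $\gamma<1$). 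Balancing the two exponents — $\gamma>\tfrac23$ is needed so that, after squaring the $\eps^\gamma$ rotation error in the symmetric part, one still beats the honest $\eps^2$ elastic scale, i.e.\ $2\gamma>2$ would be too much but $2\gamma \ge$ what is needed matches $\beta>\tfrac23$ — is the crux of the whole linearization and is where the second-gradient regularization is indispensable.
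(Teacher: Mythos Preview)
Your plan is essentially the paper's: coarea on $\nabla y_\eps$ to build a Caccioppoli partition at scale $\eps^\gamma$ with $O(\eps^{\beta-\gamma})$ extra perimeter, assign piecewise rotations, then apply $GSBD^2$ compactness to $u_\eps$. Two details differ from the paper and one claim is off.

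The paper avoids projecting onto $SO(d)$ before coarea: it applies the coarea formula directly to each scalar component $(\nabla y_\eps)_{ij}\in GSBV^2(\Omega')$, slices $\R$ into intervals of length $T_\eps=\eps^\gamma$, picks one good level per interval by averaging, and intersects the $d^2$ resulting partitions. This gives on each piece $P_j^\eps$ a matrix $F_j^\eps$ (not a priori a rotation) with $\|\nabla y_\eps-F_j^\eps\|_{L^\infty(P_j^\eps)}\le c\eps^\gamma$ \emph{everywhere} --- no bad set to excise. Only afterwards is $F_j^\eps$ projected to $\bar R_j^\eps\in SO(d)$, which yields the key pointwise bound $|\nabla y_\eps(x)-\bar R_j^\eps|\le\max\{C\eps^\gamma,\,2\dist(\nabla y_\eps(x),SO(d))\}$ on $P_j^\eps$. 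Your projection-first route can be made to work, but excising the far-from-$SO(d)$ set is not free in perimeter; you would need a further coarea step on $\dist(\nabla y_\eps,SO(d))$ to control it, which the paper's order of operations sidesteps.

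Your argument for (iii) is not quite right: the identity ${\rm sym}(R-\Id)=-\tfrac12(R-\Id)^T(R-\Id)$ holds for rotations, but $\nabla y^{\rm rot}_\eps$ is not one, and ``${\rm sym}(\nabla y_\eps)-{\rm sym}(R_j^\eps)$'' is not the relevant quantity. The paper instead uses the linearization $|{\rm sym}(F-\Id)|=\dist(F,SO(d))+O(|F-\Id|^2)$ near $SO(d)$ together with the trivial $|{\rm sym}(F-\Id)|\le|F-\Id|$ elsewhere to obtain $|{\rm sym}(F-\Id)|^2\le C\dist^2(F,SO(d))+C\min\{|F-\Id|^2,|F-\Id|^4\}$; inserting the pointwise $\max$-bound above and integrating gives $\|{\rm sym}(\nabla y^{\rm rot}_\eps)-\Id\|_{L^2}^2\le C\eps^2+C\eps^{4\gamma}$. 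So (iii) needs only $\gamma>\tfrac12$; the hypothesis $\gamma>\tfrac23$ plays no role in this theorem --- it enters later, in the $\Gamma$-liminf, to kill the cubic Taylor remainder $\eps|\nabla u_\eps|^3\lesssim\eps^{3\gamma-2}$. Part (b) is as you describe, except that (iv) is not direct output of the $GSBD^2$ compactness theorem: the paper redefines $u$ as a suitable constant $\lambda$ on $E_u$ to force $\partial^*E_u\cap\Omega'\subset J_u$ up to an $\mathcal H^{d-1}$-null set.
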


Here and in the sequel, we follow the usual convention that convergence of the continuous parameter $\eps \to 0$ stands for convergence of arbitrary sequences $\lbrace \eps_i \rbrace_i$ with $\eps_i \to 0$ as $i \to \infty$, see \cite[Definition 1.45]{Braides:02}.  The compactness result  will be proved in Subsection \ref{sec: comp-proof}.

Note that \eqref{eq: first conditions}(i) implies $y_\eps^{\rm rot} \in \mathcal{S}_{\eps,h}$. In view of  \eqref{eq: first conditions}(ii), the frame indifference of the elastic energy, and $\gamma <\beta$, \BBB one can show that \EEE the Griffith-type energy \eqref{eq: relaxed energy} of $y_\eps^{\rm rot}$ is \BBB asymptotically   not larger than the one of $y_\eps$. \EEE  The control on the symmetric part of the derivative \eqref{eq: first conditions}(iii) is essential to obtain compactness in $GSBD^2(\Omega')$ for the sequence $(u_\eps)_\eps$. Property \eqref{eq: first conditions}(iv) will be needed to control higher order terms in the passage to linearized elastic energies, see Theorem \ref{rig-th: gammaconv} below.

The presence of the set $E_u$ is due to the compactness result in $GSBD^2(\Omega')$, see  \cite{Crismale} and  Theorem \ref{th: crismale-comp}. In principle, the phenomenon that the sequence is unbounded on a set of positive measure can be avoided by generalizing the definition of \eqref{eq: rescalidipl}: in \cite[Theorem 6.1]{Solombrino} and \cite[Theorem 2.2]{Friedrich:15-2} it has been shown that, by subtracting in \eqref{eq: rescalidipl} suitable translations on a Caccioppoli partition of $\Omega'$ related to $y_\eps$, one can achieve $E_u = \emptyset$. This construction, however, is limited \BBB so far   to dimension two. \EEE  As discussed in \cite{Crismale}, the presence of $E_u$ is not an issue  for minimization problems of Griffith energies since a minimizer can be recovered by choosing $u$ affine on $E_u$ with $e(u)=0$, cf.\ \eqref{eq: the main convergence}(iv). We also note that $E_u \subset \Omega$, i.e., $E_u \cap (\Omega'\setminus \overline{\Omega}) = \emptyset$.

\begin{definition}[Asymptotic representation]\label{def:conv}
{\normalfont
We say that a sequence $(y_\eps)_\eps$ with $y_\eps \in \mathcal{S}_{\eps,h}$ is \emph{asymptotically represented} by a limiting displacement $u \in GSBD^2_h$, and write $y_\eps \rightsquigarrow u$, if there exist sequences of Caccioppoli partitions $(P_j^\eps)_j$ of $\Omega'$ and corresponding rotations $(R^\eps_j)_j \subset SO(d)$ such that  \eqref{eq: first conditions} and \eqref{eq: the main convergence} hold for some fixed $\gamma \in (\frac{2}{3},\beta)$, where $y_\eps^{\rm rot}$ and $u_\eps$ are defined in \eqref{eq: piecewise rotated} and \eqref{eq: rescalidipl}, respectively. 
}
\end{definition}
Theorem \ref{thm: compactess} shows that for each $(y_\eps)_\eps$ with $\sup_\eps \overline{\mathcal{E}}_\eps(y_\eps) < + \infty$ there exists a subsequence $(y_{\eps_k})_k$ and $u \in GSBD^2_h$ such that $y_{\eps_k}  \rightsquigarrow u$ as $k \to \infty$.
We speak of asymptotic representation instead of convergence, and we use the symbol $ \rightsquigarrow $, in order to emphasize that  Definition \ref{def:conv} cannot be understood as a convergence with respect to a certain topology. In particular, the  limiting function $u$ for a given (sub-)sequence $(y_\eps)_\eps$ is not determined uniquely, but  depends fundamentally on the choice of the sequences $(P_j^\eps)_j$ and $(R^\eps_j)_j$. To illustrate this phenomenon, we consider an example similar to \cite[Example 2.4]{Friedrich:15-2}.

\begin{example}[Nonuniqueness of limits]\label{ex}
{\normalfont
Consider $\Omega'= (0,3) \times (0,1)$, $\Omega = (1,3) \times (0,1)$, $\Omega_1 = (0,2) \times (0,1)$, $\Omega_2 = (2,3) \times (0,1)$, $h \equiv 0$, and   
$$y_\eps(x) = x \,\chi_{\Omega_1}(x) + \bar{R}_{\eps}\,x \,\chi_{\Omega_2}(x) \ \ \  \ \text{for} \ x \in \Omega',$$
where $\bar{R}_\eps \in SO(2)$ with $\bar{R}_\eps = \Id + \eps A + {\rm O}(\eps^2) $ for some $A \in \R^{2 \times 2}_{\rm skew}$.    Then two possible alternatives are
\begin{align*}
(1)& \ \ P^{\eps}_1 = \Omega_1, \  \ P^{\eps}_2 = \Omega_2, \ \  R_1^{\eps} = \Id, \ \ R_2^{\eps} = \bar{R}_\eps^{-1},\\
(2)& \ \ \tilde{P}^{\eps}_1 = \Omega',  \ \ \tilde{R}_1^{\eps} = \Id.
\end{align*}
Letting $u_{\eps} = \eps^{-1} (\sum_{j=1}^2 R_j^{\eps}y_{\eps} \chi_{P_j^{\eps}} -\id)$ and $\tilde{u}_{\eps} = \eps^{-1}  (y_{\eps} -\id)$, we find  the limits $u \equiv 0$ and $\tilde{u}(x) = A\,x \,  \chi_{\Omega_2}(x)$, respectively. }
\end{example}

We refer to \cite[Section 2.3]{Friedrich:15-2} for a further discussion about different choices of the involved partitions and rigid motions. Here, we show that it is possible to identify uniquely the relevant notions $e(u)$ and $J_u$ of the limit. This is content of the following lemma.

\begin{lemma}[Characterization of limiting displacements]\label{lemma: characteri}
Suppose that   a sequence $(y_\eps)_\eps$ satisfies $y_\eps  \rightsquigarrow  u_1$ and $y_\eps  \rightsquigarrow  u_2$, where $u_1, u_2 \in GSBD^2_h$, $u_1 \neq u_2$. Let $E_{u_1}, E_{u_2} \subset \Omega$ be the sets given in \eqref{eq: the main convergence}. Then 

\begin{itemize}
\item[(a)]  $e(u_1) = e(u_2)$ $\mathcal{L}^d$-a.e.\ on $\Omega' \setminus (E_{u_1} \cup E_{u_2})$.
\item[(b)]  If additionally $(y_\eps)_\eps$ is a   minimizing sequence, i.e.,
\begin{align}\label{eq: minimizer}
\overline{\mathcal{E}}_\eps(y_\eps) \le \inf_{\bar{y} \in \mathcal{S}_{\eps,h}} \overline{\mathcal{E}}_\eps(\bar{y}) + \rho_\eps \  \ \ \ \ \text{with } \ \rho_\eps \to 0 \text{ as $\eps \to 0$},   
\end{align}
then $e(u_1) = e(u_2)$ $\mathcal{L}^d$-a.e.\ on $\Omega'$, and $J_{u_1} = J_{u_2}$ up to an $\mathcal{H}^{d-1}$-negligible set. 
\end{itemize}
\end{lemma}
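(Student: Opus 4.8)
The plan is to exploit the freedom in the definition of $\rightsquigarrow$ (the partitions $(P_j^\eps)_j$ and rotations $(R_j^\eps)_j$) while extracting, via the compactness statements in Theorem~\ref{thm: compactess}(b), a \emph{common} refined partition that simultaneously witnesses both representations $y_\eps \rightsquigarrow u_1$ and $y_\eps \rightsquigarrow u_2$. Denote by $(P_j^\eps, R_j^\eps)$ and $(Q_k^\eps, S_k^\eps)$ the two choices, with associated piecewise rotated functions $y^{(1),\rm rot}_\eps, y^{(2),\rm rot}_\eps$ and rescaled displacements $u^{(1)}_\eps, u^{(2)}_\eps$. The key elementary observation is that on any set $P_j^\eps \cap Q_k^\eps$ the two rotated deformations differ by the fixed rotation $R_j^\eps (S_k^\eps)^{-1}$, so that the matrix $R_j^\eps (S_k^\eps)^{-1}$ is within $C\eps^\gamma$ of $\Id$ in $L^2$-norm over that piece (by \eqref{eq: first conditions}(iv) applied to both functions, wherever the piece has positive measure), hence within $C\eps^\gamma$ of $\Id$ as a matrix. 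First I would use this to compare $u^{(1)}_\eps$ and $u^{(2)}_\eps$ piecewise: on $P_j^\eps \cap Q_k^\eps$ one has $\eps u^{(1)}_\eps = R_j^\eps y_\eps - \id$ and $\eps u^{(2)}_\eps = S_k^\eps y_\eps - \id$, whence $\eps(u^{(1)}_\eps - u^{(2)}_\eps) = (R_j^\eps (S_k^\eps)^{-1} - \Id)\, (S_k^\eps y_\eps)$, and since $S_k^\eps y_\eps = \id + \eps u^{(2)}_\eps$ is bounded in an appropriate sense, the antisymmetric-type perturbation $R_j^\eps (S_k^\eps)^{-1} - \Id = \eps A_{jk}^\eps + \mathrm{O}(\eps^{2\gamma})$ with $|A_{jk}^\eps|\le C$ contributes a piecewise-constant skew field at order $\eps$.

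\textbf{Part (a).} Taking symmetric parts kills the leading skew contribution: on $P_j^\eps \cap Q_k^\eps$ we get ${\rm sym}(\nabla u^{(1)}_\eps) - {\rm sym}(\nabla u^{(2)}_\eps) = {\rm sym}\big(\eps^{-1}(R_j^\eps(S_k^\eps)^{-1}-\Id)\nabla(S_k^\eps y_\eps)\big)$, and using $\nabla(S_k^\eps y_\eps) = \Id + \mathrm{O}(\eps^\gamma)$ in $L^2$ together with ${\rm sym}(A_{jk}^\eps \Id) = {\rm sym}(A_{jk}^\eps) = 0$, the difference is $\mathrm{O}(\eps^{2\gamma-1}) \to 0$ in $L^2$ on each common piece, and globally since $2\gamma - 1 > 1/3 > 0$. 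On the other hand, by \eqref{eq: the main convergence}(ii), $e(u^{(i)}_\eps) \rightharpoonup e(u_i)$ weakly in $L^2(\Omega' \setminus E_{u_i})$; so on $\Omega' \setminus (E_{u_1} \cup E_{u_2})$ both weak limits exist and the $L^2$-difference $e(u^{(1)}_\eps) - e(u^{(2)}_\eps) \to 0$ strongly there (one must check the interplay of the exceptional sets carefully — the piecewise estimate must be localized away from $E_{u_1}\cup E_{u_2}$, using that these are sets of finite perimeter and that the convergences in \eqref{eq: the main convergence}(i)--(ii) hold off them), forcing $e(u_1) = e(u_2)$ a.e.\ on $\Omega' \setminus (E_{u_1}\cup E_{u_2})$.

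\textbf{Part (b).} Under the minimizing-sequence hypothesis \eqref{eq: minimizer}, I would first upgrade (a) to all of $\Omega'$: on $E_{u_1}$, by \eqref{eq: the main convergence}(iv) we have $e(u_1) = 0$, so it suffices to show $e(u_2) = 0$ a.e.\ on $E_{u_1}$ (and symmetrically). This should follow because, were $e(u_2)$ nonzero on a positive-measure part of $E_{u_1}$, the $\Gamma$-liminf inequality (Theorem~\ref{rig-th: gammaconv}, whose statement I am allowed to invoke) together with the fact that the limit energy only sees $e(u)$ and $J_u$ would let one construct, from the representation giving $u_1$, a strictly lower-energy competitor — contradicting minimality; more directly, the minimality propagates to the limit so that $u_1$ and $u_2$ must both be limit minimizers of the same functional $\mathcal{E}_0$ over $GSBD^2_h$, and by the specific structure (strict convexity of $W''(\Id)$ on $\R^{d\times d}_{\rm sym}$ from \eqref{assumptions-W}) the elastic part $\int e(u):\mathbb{C}e(u)$ is strictly convex in $e(u)$, pinning it down uniquely a.e.; combined with \eqref{eq: the main convergence}(iv) this gives $e(u_1) = e(u_2)$ everywhere. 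For the jump sets: by \eqref{eq: the main convergence}(iii) and the $\Gamma$-convergence, for a minimizing sequence the surface energy is asymptotically attained, i.e.\ $\mathcal{H}^{d-1}(J_{u_i}) = \lim_\eps \mathcal{H}^{d-1}(J_{y_\eps} \cup J_{\nabla y_\eps})$ (no strict inequality, else one contradicts $\Gamma$-limsup optimality), so both $J_{u_1}$ and $J_{u_2}$ have the same $\mathcal{H}^{d-1}$-measure and are both "minimal" relative to the same prescribed boundary data and the (now uniquely determined) bulk term; a matching-lower-bound argument — e.g.\ the one giving uniqueness of the crack in Griffith minimizers, or directly that $J_{u_1}\triangle J_{u_2}$ carries no energy yet removing it from either would strictly lower the total — yields $J_{u_1} = J_{u_2}$ up to $\mathcal{H}^{d-1}$-null sets.

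\textbf{Main obstacle.} The delicate point is the bookkeeping of the exceptional sets $E_{u_1}, E_{u_2}$ and the interaction of the two distinct Caccioppoli partitions: the piecewise comparison $R_j^\eps(S_k^\eps)^{-1} \approx \Id$ is only valid on pieces $P_j^\eps \cap Q_k^\eps$ of non-negligible measure, and one must control the (vanishing, by \eqref{eq: first conditions}(ii)) perimeter contribution of the common refinement $\bigcup_{j,k}\partial^*(P_j^\eps\cap Q_k^\eps)$ and ensure the skew corrections $A_{jk}^\eps$ are genuinely uniformly bounded and do not concentrate. In part (b), the subtlety is justifying that a minimizing sequence forces the limit to be an honest minimizer of $\mathcal{E}_0$ — this requires the full $\Gamma$-convergence with boundary conditions (convergence of minimizers, as announced in the introduction) rather than just $\Gamma$-liminf, and care that the asymptotic representation $\rightsquigarrow$ is compatible with the recovery-sequence construction.
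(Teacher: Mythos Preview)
Your approach coincides with the paper's: one writes $e(u^{(1)}_\eps) - e(u^{(2)}_\eps) = \eps^{-1}{\rm sym}\big(Z_\eps\,\nabla y^{(2),\rm rot}_\eps\big)$ with $Z_\eps := \sum_{j,k}\big(R_j^\eps(S_k^\eps)^{-1}-\Id\big)\chi_{P_j^\eps\cap Q_k^\eps}$, bounds $\|Z_\eps\|_{L^2}\le C\eps^\gamma$ and $\|{\rm sym}\,Z_\eps\|_{L^1}\le C\eps^{2\gamma}$ (the latter from $|{\rm sym}(R-\Id)|\le C|R-\Id|^2$, see \eqref{rig-eq: linearization}), and concludes $\|e(u^{(1)}_\eps)-e(u^{(2)}_\eps)\|_{L^1(\Omega')}\le C\eps^{2\gamma-1}\to 0$. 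Two corrections to your write-up: (i) the rotation differences are controlled only in the averaged sense $\sum_{j,k}\mathcal L^d(P_j^\eps\cap Q_k^\eps)\,|R_j^\eps-S_k^\eps|^2\le C\eps^{2\gamma}$, not pointwise---on pieces of small measure they may be far apart, so your expansion $R_j^\eps(S_k^\eps)^{-1}-\Id=\eps A_{jk}^\eps+{\rm O}(\eps^{2\gamma})$ with $|A_{jk}^\eps|\le C$ is not available; (ii) the resulting estimate is in $L^1(\Omega')$, not $L^2$. Neither is fatal: $L^1$-convergence to zero combined with the separate weak $L^2$-convergences from \eqref{eq: the main convergence}(ii) still identifies $e(u_1)=e(u_2)$ on $\Omega'\setminus(E_{u_1}\cup E_{u_2})$.

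\textbf{Part (b).} Here there is a genuine gap. Your argument rests on strict convexity of $Q$ forcing $e(u)$ to be unique among minimizers, and on an alleged ``uniqueness of the crack in Griffith minimizers''. Neither is available: minimizers of $\mathcal E$ over $GSBD^2_h$ are \emph{not} unique in general, and knowing merely that $u_1,u_2$ both minimize $\mathcal E$ does not yield $e(u_1)=e(u_2)$ or $J_{u_1}=J_{u_2}$ (your sentence about ``removing $J_{u_1}\triangle J_{u_2}$'' is not meaningful---the jump set is determined by $u$, not a free variable). The lemma asserts something more specific, namely that two limits of the \emph{same} sequence agree, and the paper exploits this by manufacturing further minimizing sequences from $(y_\eps)$. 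Concretely: let $D_\eps$ be the union of all partition pieces (from both partitions) contained in $\Omega$, so that $y^{{\rm rot},i}_\eps=y_\eps$ on $\Omega'\setminus D_\eps$ (Remark~\ref{re: unchanged}), and pass to a limit $D_\eps\to D$. One then sets $\hat E:=E_{u_1}\setminus D=E_{u_2}\setminus D$ and observes $u_1=u_2$ on $\Omega'\setminus(D\cup\hat E)$. The key construction is the translated competitors $y^{\lambda_\eps,i}_\eps := y^{{\rm rot},i}_\eps + \lambda_\eps\chi_{D_\eps}$: by frame indifference and \eqref{eq: first conditions}(ii) these are again minimizing sequences. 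For $\lambda_\eps=\lambda\eps$ one obtains $y^{\lambda_\eps,i}_\eps\rightsquigarrow u_i^\lambda$ with $u_i^\lambda=u_i+\lambda\chi_D$ on $\Omega'\setminus E_{u_i}$; for suitable $|\lambda_\eps|/\eps\to\infty$ one obtains a common limit $\hat u$ with $e(\hat u)=0$ on $D\cup\hat E$ and $\mathcal H^{d-1}(J_{\hat u}\cap(D\cup\hat E)^1)=0$. All of $u_i^\lambda$ and $\hat u$ are minimizers by Corollary~\ref{cor: main cor}. Since $e(u_i^\lambda)=e(u_i)$ is independent of $\lambda$, the jump sets $J_{u_i^\lambda}$ must also be independent of $\lambda$, which forces $\partial^*(D\cup\hat E)\cap\Omega'\subset J_{u_i}$; comparing energies with the minimizer $\hat u$ then forces $e(u_i)=0$ on $D\cup\hat E$ and $\mathcal H^{d-1}(J_{u_i}\cap(D\cup\hat E)^1)=0$. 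Together with $u_1=u_2$ outside $D\cup\hat E$ this gives the claim. The missing idea in your proposal is precisely this one-parameter \emph{family} of translated minimizing sequences, which probes how the limiting jump sets must contain $\partial^*D$.
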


Note that property (a) is consistent with Example \ref{ex}. Example \ref{ex} also shows that the property $J_{u_1} = J_{u_2}$ is not satisfied in general but some extra condition, e.g.\ the one in \eqref{eq: minimizer}, is necessary. We refer to Example \ref{e2} below for an \BBB illustration \EEE that in case (a)  the strains are not necessarily the same inside $E_{u_1} \cup E_{u_2}$. The result will be proved in Subsection \ref{sec: admissible}.

%----------------------------------------------------------------------------------------------------------------------
\subsection{Passage from the nonlinear to a linearized Griffith model}\label{rig-sec: sub, main gamma}
%-----------------------------------------------------------------------------------------------------------------------

We now show that the nonlinear energies of Griffith-type can be related to a linearized Griffith model in the small strain limit  by $\Gamma$-convergence. We also discuss the convergence of minimizers for boundary value problems. Given bounded Lipschitz domains $\Omega \subset \Omega'$, we define the energy $\mathcal{E}: GSBD^2(\Omega') \to [0,+\infty)$ by 
\begin{align}\label{rig-eq: Griffith en-lim}
\mathcal{E}(u) =  \int_{\Omega'} \frac{1}{2} Q(e(u))  + \kappa\mathcal{H}^{d-1}(J_u),
\end{align}
where  $\kappa>0$, and  $Q: \R^{d \times d} \to [0,+\infty)$ is the quadratic form $Q(F) = D^2W(\Id)F : F$ for all $F \in \R^{d \times d}$. In view of  \eqref{assumptions-W},  $Q$ is positive definite on $\R^{d \times d}_{\rm sym}$ and vanishes on $\R^{d \times d }_{\rm skew}$. 

For the $\Gamma$-limsup inequality, more precisely for the application of the density result stated in Theorem \ref{th: crismale-density2}, \BBB we make the following geometrical assumption on the Dirichlet boundary $\partial_D \Omega= \Omega' \cap \partial \Omega$: there exists a decomposition $\partial \Omega = \partial_D \Omega \cup \partial_N\Omega \cup N$ with 
\begin{align}\label{eq: density-condition2}
\partial_D \Omega, \partial_N\Omega \text{ relatively open}, \ \  \  \mathcal{H}^{d-1}(N) = 0, \ \ \  \partial_D\Omega \cap \partial_N \Omega = \emptyset, \ \ \  \partial (\partial_D \Omega) = \partial (\partial_N \Omega),
\end{align}
and there exist $\bar{\delta}>0$ small and $x_0 \in\R^d$ such that for all $\delta \in (0,\bar{\delta})$ there  holds  
\begin{align}\label{eq: density-condition}
O_{\delta,x_0}(\partial_D \BBB \Omega \EEE ) \subset \Omega,
\end{align}
where $O_{\delta,x_0}(x) := x_0 + (1-\delta)(x-x_0)$. \EEE 

We now present our main $\Gamma$-convergence result.  Recall  Definition \ref{def:conv}, as well as the definition of the nonlinear energies in \eqref{rig-eq: Griffith en} and  \eqref{eq: relaxed energy}. Moreover, recall     the spaces $\mathcal{S}_{\eps,h}$ and $GSBD^2_h$ in \eqref{eq: boundary-spaces} for $h \in W^{2,\infty}(\Omega';\R^d)$.

\begin{theorem}[Passage to linearized model]\label{rig-th: gammaconv}
Let $\Omega \subset \Omega' \subset \R^d$ be bounded Lipschitz domains. Suppose that $W$ satisfies \eqref{assumptions-W} and that  \eqref{eq: density-condition2}-\eqref{eq: density-condition} hold.  Let $h \in W^{2,\infty}(\Omega';\R^d)$.  

\begin{itemize}
\item[(a)] (Compactness) For each sequence $(y_\eps)_\eps$ with $y_\eps \in \mathcal{S}_{\eps,h}$ and $\sup_\eps \mathcal{E}_\eps(y_\eps) < +\infty$, there exists a subsequence (not relabeled) and $u \in GSBD^2_h$ such that $y_\eps \rightsquigarrow u$.
\item[(b)] ($\Gamma$-liminf inequality) For each sequence  $(y_\eps)_\eps$, $y_\eps \in \mathcal{S}_{\eps,h}$, with $y_\eps \rightsquigarrow u$ for some $u \in GSBD^2_h$ we have   
$$\liminf_{\eps \to 0} \mathcal{E}_{\eps}(y_\eps) \ge \mathcal{E}(u).$$
\item[(c)] ($\Gamma$-limsup inequality) For each $u \in GSBD^2_h$ there exists a sequence $(y_\eps)_\eps$, $y_\eps \in \mathcal{S}_{\eps,h}$, such that $y_\eps \rightsquigarrow u$  and  
  $$\lim_{\eps \to 0 } \mathcal{E}_{\eps}(y_\eps) = \mathcal{E}(u).$$
\end{itemize}
The same \BBB statements hold \EEE  with $\overline{\mathcal{E}}_\eps$ in place of  $\mathcal{E}_\eps$.
\end{theorem}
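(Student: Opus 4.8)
The plan is to prove the three assertions one at a time, treating $\mathcal{E}_\eps$ and $\overline{\mathcal{E}}_\eps$ simultaneously. Part~(a) requires nothing new: relaxation can only decrease the functional, so $\overline{\mathcal{E}}_\eps\le\mathcal{E}_\eps$ on $GSBV^2_2(\Omega';\R^d)$, whence a sequence bounded in $\mathcal{E}_\eps$-energy is bounded in $\overline{\mathcal{E}}_\eps$-energy, and Theorem~\ref{thm: compactess} directly supplies the subsequence, the Caccioppoli partitions $(P_j^\eps)_j$, the rotations $(R_j^\eps)_j\subset SO(d)$, and a limit $u\in GSBD^2_h$ with $y_\eps\rightsquigarrow u$; for $\overline{\mathcal{E}}_\eps$ this is Theorem~\ref{thm: compactess} verbatim. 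For the two inequalities it then suffices to prove the $\Gamma$-liminf bound for the \emph{smaller} functional $\overline{\mathcal{E}}_\eps$ (whence it follows for $\mathcal{E}_\eps$), and to produce in the $\Gamma$-limsup a recovery sequence along which $\mathcal{E}_\eps$ and $\overline{\mathcal{E}}_\eps$ coincide (which settles both limsup statements).

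For the $\Gamma$-liminf inequality I would start from $y_\eps\rightsquigarrow u$ with the associated $y_\eps^{\rm rot}$, rescaled displacements $u_\eps$ as in \eqref{eq: rescalidipl}, partitions and rotations. Frame indifference \eqref{assumptions-W}(ii), the orthogonality of $R_j^\eps$, and the pointwise identities $\nabla y_\eps^{\rm rot}=R_j^\eps\nabla y_\eps$, $\nabla^2 y_\eps^{\rm rot}=R_j^\eps\nabla^2 y_\eps$ on $P_j^\eps$ show that the elastic and second-gradient terms are unchanged if $y_\eps$ is replaced by $y_\eps^{\rm rot}$; I then discard the nonnegative second-gradient term and bound the surface term below by $\kappa\mathcal{H}^{d-1}(J_u)$ using \eqref{eq: the main convergence}(iii). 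The core is the elastic term. Writing $\nabla y_\eps^{\rm rot}=\Id+\eps\nabla u_\eps$, I use that \eqref{eq: first conditions}(iii)--(iv) give $\|e(u_\eps)\|_{L^2(\Omega')}\le C$ but only the much weaker $\|\nabla u_\eps\|_{L^2(\Omega')}\le C\eps^{\gamma-1}$, and perform a truncation: with $t_\eps:=\eps^{1-\gamma/2}$ — chosen so that $\eps^\gamma\ll t_\eps\ll\eps^{2-2\gamma}$, which is possible \emph{precisely because} $\gamma>\tfrac23$ — set $G_\eps:=\{|\eps\nabla u_\eps|\le t_\eps\}$ and $B_\eps:=\Omega'\setminus G_\eps$, so that $|B_\eps|\le Ct_\eps^{-2}\eps^{2\gamma}\to0$. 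On $G_\eps$ the gradient lies in the neighbourhood of $SO(d)$ on which $W$ is $C^3$, and a Taylor expansion of $W$ at $\Id$ (using $W(\Id)=0$, $DW(\Id)=0$, and $Q(F)=Q({\rm sym}\,F)$, a consequence of frame indifference) gives $\eps^{-2}W(\nabla y_\eps^{\rm rot})\ge\tfrac12 Q(e(u_\eps))-Ct_\eps|\nabla u_\eps|^2$ on $G_\eps$; integrating and discarding the nonnegative contribution of $B_\eps$, the error is bounded by $Ct_\eps\eps^{2\gamma-2}\to0$. Since $|B_\eps|\to0$ and $e(u_\eps)$ is bounded in $L^2$, $e(u_\eps)\chi_{G_\eps}\rightharpoonup e(u)$ in $L^2(\Omega'\setminus E_u;\R^{d\times d}_{\rm sym})$ by \eqref{eq: the main convergence}(ii), and weak lower semicontinuity of $w\mapsto\int Q(w)$ — together with $Q\ge0$ and $e(u)=0$ on $E_u$, see \eqref{eq: the main convergence}(iv) — yields $\liminf_\eps\eps^{-2}\int_{\Omega'}W(\nabla y_\eps^{\rm rot})\ge\tfrac12\int_{\Omega'}Q(e(u))$; adding the surface term completes $\liminf_\eps\overline{\mathcal{E}}_\eps(y_\eps)\ge\mathcal{E}(u)$.

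For the $\Gamma$-limsup inequality I would combine a density argument with an explicit construction for regular data. By the $GSBD^2$ density result Theorem~\ref{th: crismale-density2} — whose hypotheses on the Dirichlet part are exactly \eqref{eq: density-condition2}--\eqref{eq: density-condition}, used to push the jump set and the boundary datum slightly off $\partial_D\Omega$ — there exist $u_k\in GSBD^2_h$ that are $W^{2,\infty}$ away from a finite polyhedral set $\Gamma_k=J_{u_k}$ and satisfy $u_k\to u$ in measure (hence, along a subsequence, $\mathcal{L}^d$-a.e.), $e(u_k)\to e(u)$ in $L^2$, $\mathcal{H}^{d-1}(J_{u_k})\to\mathcal{H}^{d-1}(J_u)$, so that $\mathcal{E}(u_k)\to\mathcal{E}(u)$. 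For fixed $k$ put $y_\eps^{(k)}:=\id+\eps u_k\in\mathcal{S}_{\eps,h}$; then $\nabla y_\eps^{(k)}=\Id+\eps\nabla u_k$ with $\eps\|\nabla u_k\|_{L^\infty}\to0$, the inclusion $J_{\nabla y_\eps^{(k)}}\subset J_{u_k}=J_{y_\eps^{(k)}}$ gives $\mathcal{E}_\eps(y_\eps^{(k)})=\overline{\mathcal{E}}_\eps(y_\eps^{(k)})$, a Taylor expansion of $W$ at $\Id$ gives $\eps^{-2}\int_{\Omega'}W(\Id+\eps\nabla u_k)\to\tfrac12\int_{\Omega'}Q(e(u_k))$, and $\eps^{-2\beta}\int_{\Omega'}|\nabla^2 y_\eps^{(k)}|^2=\eps^{2-2\beta}\|\nabla^2 u_k\|_{L^2}^2\to0$ because $\beta<1$; hence $\lim_{\eps\to0}\mathcal{E}_\eps(y_\eps^{(k)})=\mathcal{E}(u_k)$. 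A diagonal choice $k=k(\eps)\to\infty$ slow enough that $|\mathcal{E}_\eps(y_\eps^{(k(\eps))})-\mathcal{E}(u_{k(\eps)})|\to0$ and $\eps\|\nabla u_{k(\eps)}\|_{L^2}\le\eps^\gamma$ (feasible since $\eps^{\gamma-1}\to\infty$) then produces $y_\eps:=y_\eps^{(k(\eps))}$ with $\mathcal{E}_\eps(y_\eps)=\overline{\mathcal{E}}_\eps(y_\eps)\to\mathcal{E}(u)$, and $y_\eps\rightsquigarrow u$ follows by taking the trivial partition $P_1^\eps=\Omega'$, $R_1^\eps=\Id$ (so $y_\eps^{\rm rot}=y_\eps$, $u_\eps=u_{k(\eps)}$, $E_u=\emptyset$) and checking \eqref{eq: first conditions} and \eqref{eq: the main convergence} against the listed convergences.

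The step I expect to be the main obstacle is the truncation in the $\Gamma$-liminf inequality: one must trade the measure of the set where $\eps\nabla u_\eps$ is not small against the cubic Taylor remainder of $W$, and the only $L^2$-control available on the \emph{full} gradient — as opposed to its symmetric part — is the weak bound $\|\nabla u_\eps\|_{L^2}\le C\eps^{\gamma-1}$, which comes from the second-gradient regularization through \eqref{eq: first conditions}(iv). It is here that the restriction $\gamma\in(\tfrac23,\beta)$ (equivalently $2-2\gamma<\gamma$) is genuinely used — it is exactly what makes an admissible threshold $t_\eps$ exist — and it is the point at which the second gradient of the nonlinear model enters in passing to the linear theory. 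A secondary, more technical difficulty is that, since $\rightsquigarrow$ is not induced by a topology, the diagonalization in the $\Gamma$-limsup must be carried out by hand, so that the constructed sequence verifies \emph{all} the conditions of Definition~\ref{def:conv}, rather than being deduced from an abstract lower-semicontinuity property of the $\Gamma$-limsup.
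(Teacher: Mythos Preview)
Your proof is correct and follows essentially the same strategy as the paper: compactness via Theorem~\ref{thm: compactess}, the liminf via a truncation of $\nabla u_\eps$ combined with a Taylor expansion of $W$ at $\Id$ and weak lower semicontinuity of $\int Q(e(\cdot))$, and the limsup via the density result Theorem~\ref{th: crismale-density2} followed by the ansatz $y_\eps=\id+\eps v_\eps$. The only cosmetic differences are that the paper truncates at a level $\eta_\eps$ on $|\nabla u_\eps|$ (with $\eps^{1-\gamma}\eta_\eps\to\infty$, $\eps\eta_\eps^3\to0$) and bounds the cubic remainder pointwise by $C\eps\eta_\eps^3$, whereas you truncate on $|\eps\nabla u_\eps|$ and reuse the $L^2$ bound $\|\nabla u_\eps\|_{L^2}\le C\eps^{\gamma-1}$ for the remainder; and in the limsup the paper absorbs your diagonalization directly into the choice of $v_\eps$ with $\|\nabla v_\eps\|_{L^\infty}+\|\nabla^2 v_\eps\|_{L^\infty}\le\eps^{(\beta-1)/2}$.
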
 

We point out that we identify  a `standard' Griffith energy in linearized elasticity although we departed from a nonlinear model for nonsimple materials. As a corollary, we obtain the convergence of minimizers for  boundary value problems.

\begin{corollary}[Minimization problems]\label{cor: main cor}
Consider the setting of Theorem \ref{rig-th: gammaconv}.  Then 
\begin{align}\label{eq: eps control2}
\inf_{\bar{y} \in \mathcal{S}_{\eps,h}} \mathcal{E}_\eps(\bar{y}) \  \to \  \min_{u \in GSBD^2_h} \mathcal{E}(u)
\end{align}
as $\eps \to 0$. Moreover, for each sequence $(y_\eps)_\eps$ with $y_\eps \in \mathcal{S}_{\eps,h}$ satisfying
\begin{align}\label{eq: eps control}
\mathcal{E}_\eps(y_\eps) \le \inf_{\bar{y} \in \mathcal{S}_{\eps,h}} \mathcal{E}_\eps(\bar{y}) + \rho_\eps \  \ \ \ \ \text{with } \ \rho_\eps \to 0 \text{ as $\eps \to 0$},   
\end{align}
there exist a subsequence (not relabeled) and $u \in GSBD^2_h$ with $\mathcal{E}(u) = \min_{v \in GSBD^2_h} \mathcal{E}(v)$  such that $y_\eps \rightsquigarrow u$.  
\end{corollary}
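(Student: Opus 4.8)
\textbf{Proof plan for Corollary \ref{cor: main cor}.}

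The plan is to deduce the corollary from the $\Gamma$-convergence result of Theorem \ref{rig-th: gammaconv} together with the compactness statement in part (a), following the standard scheme for convergence of minimizers under $\Gamma$-convergence (cf.\ \cite[Chapter 7]{DalMaso:93}), but with the caveat that $\rightsquigarrow$ is not a topological convergence, so the abstract theorem cannot be quoted verbatim and the argument must be run by hand. First I would establish the lower bound on the limit of the infima: pick for each $\eps$ a configuration $y_\eps \in \mathcal{S}_{\eps,h}$ that is $\rho_\eps$-almost optimal as in \eqref{eq: eps control}. If $\liminf_\eps \mathcal{E}_\eps(y_\eps) = +\infty$ there is nothing to prove for that side; otherwise, passing to a subsequence realizing the liminf and along which the energies are bounded, Theorem \ref{rig-th: gammaconv}(a) yields a further subsequence and $u \in GSBD^2_h$ with $y_\eps \rightsquigarrow u$, and then the $\Gamma$-liminf inequality (b) gives $\liminf_\eps \mathcal{E}_\eps(y_\eps) \ge \mathcal{E}(u) \ge \inf_{v \in GSBD^2_h}\mathcal{E}(v)$. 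Combined with $\mathcal{E}_\eps(y_\eps) \le \inf_{\bar y \in \mathcal{S}_{\eps,h}}\mathcal{E}_\eps(\bar y) + \rho_\eps$ and $\rho_\eps \to 0$, this shows $\liminf_\eps \inf_{\mathcal{S}_{\eps,h}}\mathcal{E}_\eps \ge \inf_{GSBD^2_h}\mathcal{E}$.

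For the matching upper bound I would use the $\Gamma$-limsup inequality (c): the functional $\mathcal{E}$ on $GSBD^2_h$ attains its minimum (this is the standard existence result for Griffith energies in $GSBD^2$ with Dirichlet data, via the $GSBD^2$ compactness and lower semicontinuity results of \cite{DalMaso:13, Crismale, Friedrich:15-3}, which I may invoke — note the infimum is finite since $h$ itself, extended suitably, is admissible), so let $u_* \in GSBD^2_h$ be a minimizer. By (c) there is a recovery sequence $(y_\eps)_\eps$ with $y_\eps \in \mathcal{S}_{\eps,h}$, $y_\eps \rightsquigarrow u_*$ and $\mathcal{E}_\eps(y_\eps) \to \mathcal{E}(u_*) = \min_{GSBD^2_h}\mathcal{E}$; hence $\limsup_\eps \inf_{\mathcal{S}_{\eps,h}}\mathcal{E}_\eps \le \limsup_\eps \mathcal{E}_\eps(y_\eps) = \min_{GSBD^2_h}\mathcal{E}$. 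Together with the previous paragraph this proves \eqref{eq: eps control2}, and simultaneously shows that any almost-minimizing sequence $(y_\eps)_\eps$ as in \eqref{eq: eps control} has $\mathcal{E}_\eps(y_\eps) \to \min_{GSBD^2_h}\mathcal{E}$, in particular $\sup_\eps \mathcal{E}_\eps(y_\eps) < +\infty$.

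Finally, for the convergence of minimizers: given such an almost-minimizing sequence, $\sup_\eps \mathcal{E}_\eps(y_\eps) < +\infty$ lets me apply Theorem \ref{rig-th: gammaconv}(a) to extract a subsequence (not relabeled) and $u \in GSBD^2_h$ with $y_\eps \rightsquigarrow u$. The $\Gamma$-liminf inequality (b) then gives $\mathcal{E}(u) \le \liminf_\eps \mathcal{E}_\eps(y_\eps) = \min_{GSBD^2_h}\mathcal{E}$, whence $\mathcal{E}(u) = \min_{v \in GSBD^2_h}\mathcal{E}(v)$, i.e.\ $u$ is a minimizer. The closing sentence of the theorem statement ("the same statements hold with $\overline{\mathcal{E}}_\eps$ in place of $\mathcal{E}_\eps$") lets me run the identical argument for the relaxed functional without change. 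The only genuinely delicate point — and the one I would treat with care rather than as routine — is the bookkeeping of subsequences inherent to working with $\rightsquigarrow$ instead of a topology: one must argue that \emph{every} subsequence of an almost-minimizing sequence has a further subsequence converging (in the sense $\rightsquigarrow$) to \emph{a} minimizer, which is exactly what the statement claims, and which does not upgrade to convergence of the whole sequence because, as Example \ref{ex} shows, the limit need not be unique; so no Urysohn-type argument is available and the statement is necessarily phrased subsequentially.
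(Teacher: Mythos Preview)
Your proof is correct and follows essentially the same standard $\Gamma$-convergence scheme as the paper. The only minor difference is that you invoke existence of a minimizer for $\mathcal{E}$ from external references, whereas the paper obtains it as a byproduct: by applying the $\Gamma$-limsup inequality to an \emph{arbitrary} $v \in GSBD^2_h$ (rather than a pre-chosen minimizer $u_*$) and combining with the $\Gamma$-liminf bound on the cluster point $u$ of the almost-minimizing sequence, one gets $\mathcal{E}(u) \le \mathcal{E}(v)$ for all $v$, so $u$ itself is the sought minimizer and no external existence result is needed (the uniform energy bound is also obtained more directly, by testing with $\id + \eps h \in \mathcal{S}_{\eps,h}$).
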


The results announced in this subsection will be proved in Subsection \ref{sec: gamma}.

%------------------------------------------------------------------------------------------------------------------
\section{Preliminaries}\label{sec:pre}
%------------------------------------------------------------------------------------------------------------------
%------------------------------------------------------------------------------------------------------------------

In this section we collect some fundamental properties about (generalized) special functions of bounded variation and deformation. In particular, we recall and prove some results for $GSBV^2_2$ and $GSBD^2$ that will be needed for the proofs in Section \ref{sec: proofs}.

 \subsection{Caccioppoli partitions}\label{sec: Caccio}
We say that a partition $(P_j)_j$ of an open set $\Omega\subset \R^d$ is a \textit{Caccioppoli partition} of $\Omega$ if $\sum\nolimits_j \mathcal{H}^{d-1}(\partial^* P_j) < + \infty$, where $\partial^* P_j$ denotes  the \emph{essential boundary} of $P_j$ (see \cite[Definition 3.60]{Ambrosio-Fusco-Pallara:2000}). The  local structure of Caccioppoli partitions can be characterized as follows (see \cite[Theorem 4.17]{Ambrosio-Fusco-Pallara:2000}).
 
\begin{theorem}\label{th: local structure}
Let $(P_j)_j$ be a Caccioppoli partition of $\Omega$. Then 
$$\bigcup\nolimits_j (P_j)^1 \cup \bigcup\nolimits_{i \neq j} (\partial^* P_i \cap \partial^* P_j)$$
contains $\mathcal{H}^{d-1}$-almost all of $\Omega$.
\end{theorem}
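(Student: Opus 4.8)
The statement to be proved is Theorem \ref{th: local structure}, the local structure theorem for Caccioppoli partitions. This is a classical result (it is Theorem 4.17 in Ambrosio--Fusco--Pallara), so my plan is essentially to reconstruct the standard argument.

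The plan is to reduce the problem to the structure theory of sets of finite perimeter. First I would recall that each $P_j$ has finite perimeter in $\Omega$, so by Federer's theorem the topological boundary decomposes, up to an $\mathcal{H}^{d-1}$-null set, into the set $(P_j)^1$ of density-one points, the set $(P_j)^0$ of density-zero points, and the essential (reduced) boundary $\partial^* P_j$, on which $P_j$ has density $\tfrac12$ and a measure-theoretic normal exists. Since $(P_j)_j$ is a partition of $\Omega$ (up to $\mathcal{L}^d$-null sets), $\mathcal{L}^d$-a.e.\ point of $\Omega$ lies in exactly one $(P_j)^1$; the content of the theorem is the sharper $\mathcal{H}^{d-1}$-a.e.\ statement, namely that the only other points one needs are the pairwise intersections $\partial^* P_i \cap \partial^* P_j$.

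The key steps, in order, would be: (1) Fix $j$ and show $\mathcal{H}^{d-1}$-a.e.\ point of $\partial^* P_j$ lies in $\partial^* P_i$ for some $i \neq j$. At a point $x \in \partial^* P_j$ the blow-up of $P_j$ is a half-space $H$ with inner normal $\nu$; since the $P_i$ partition $\Omega$, the complement $\Omega \setminus P_j$ has, at $x$, density $\tfrac12$ and its blow-up is the complementary half-space. One then uses that a finite-perimeter set which is a finite or countable union $\bigcup_{i\neq j} P_i$ whose blow-up at $x$ is a half-space forces at least one $P_i$ to have positive density there; a short argument with the relative isoperimetric inequality (or the fact that $\sum_i \mathcal{H}^{d-1}(\partial^* P_i) < \infty$ lets one pass to a finite subcollection capturing all but a small fraction of the mass) shows that in fact some $P_i$ has density $\tfrac12$ at $x$ with the opposite normal, i.e.\ $x \in \partial^* P_i$. (2) Conversely, a point with density one for some $P_j$ contributes nothing off $\bigcup_j (P_j)^1$. (3) The remaining set — points where every $P_j$ has density $0$ or $\tfrac12$ but no two share density $\tfrac12$, plus the ``bad'' points where the density fails to be $0$, $\tfrac12$, or $1$ for some $j$, plus points lying in infinitely many $\partial^* P_j$ in a degenerate way — must be shown to be $\mathcal{H}^{d-1}$-null. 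For the bad density points this is Federer's theorem applied to each $P_j$ together with countable subadditivity; for the genuinely problematic ``triple-point''-type configurations one invokes that $\mathcal{H}^{d-1}(\partial^* P_j) < \infty$ with $\sum_j$ summable, so that $\mathcal{H}^{d-1}$-a.e.\ point lies in at most finitely many essential boundaries, and then a local analysis reduces to the two-set case already handled.

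The main obstacle is step (1): showing that a density-$\tfrac12$ point of $\partial^* P_j$ is genuinely shared, i.e.\ that the ``other side'' of the blow-up half-space is occupied, in the measure-theoretic sense, by a single other set $P_i$ rather than being diffusely filled by infinitely many pieces. The clean way to handle this is to exploit the summability $\sum_j \mathcal{H}^{d-1}(\partial^* P_j)<\infty$: for $\mathcal{H}^{d-1}$-a.e.\ $x$ only finitely many indices $j$ satisfy $x\in\partial^*P_j$ (this follows from $\sum_j \mathcal{H}^{d-1}(\partial^* P_j \cap K) < \infty$ on compact $K$ and a Borel--Cantelli-type argument on the upper $(d-1)$-density), and for those finitely many the local picture is that of a finite Caccioppoli partition, where the blow-up consists of finitely many cones meeting along $x$ and the density constraint $\sum_i \theta_i(x) = 1$ with each $\theta_i \in \{0,\tfrac12,1\}$ forces exactly two of them to equal $\tfrac12$ with opposite normals. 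Since I may assume everything stated earlier in the excerpt and this is a verbatim citation of \cite[Theorem 4.17]{Ambrosio-Fusco-Pallara:2000}, in practice I would simply cite that reference; but the sketch above is the argument I would write out if a self-contained proof were wanted.
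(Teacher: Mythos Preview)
The paper does not give a proof of this statement at all: Theorem \ref{th: local structure} is stated in the preliminaries section purely as a citation of \cite[Theorem 4.17]{Ambrosio-Fusco-Pallara:2000}, with no argument supplied. Your final remark already anticipates this, and a bare citation is exactly what the paper does. There is therefore nothing to compare against; your sketch goes well beyond what the paper offers.

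As for the sketch itself, the overall strategy (Federer's density theorem on each $P_j$, countable subadditivity for the bad sets, and a reduction to a finite configuration) is the standard one and is essentially correct. One point that would need tightening in a self-contained write-up is the passage from ``each $P_i$ with $i$ outside the finite index set has density $0$ at $x$'' to ``their union has density $0$ at $x$'': individual densities do not sum in general, so the constraint $\sum_i \theta_i(x)=1$ is not automatic. The clean way around this is the finite-partition approximation you allude to: for given $\epsilon>0$ choose $J$ with $\sum_{j>J}\mathcal{H}^{d-1}(\partial^*P_j)<\epsilon$, set $F=\bigcup_{j>J}P_j$, apply the (easy) finite case to $P_1,\dots,P_J,F$, and absorb the error coming from $\partial^*F$ into $\epsilon$. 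This replaces the Borel--Cantelli step and handles the density-sum issue simultaneously. But again, for the purposes of this paper the citation is all that is required.
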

Here, $(P)^1$ denote the points where $P$ has density one (see again \cite[Definition 3.60]{Ambrosio-Fusco-Pallara:2000}). 
 Essentially, the  theorem states that $\mathcal{H}^{d-1}$-a.e.\ point of $\Omega$ either belongs to exactly one element of the partition or to the intersection of exactly two sets $\partial^* P_i$, $\partial^* P_j$.

\subsection{$GSBV^2$ and $GSBV^2_2$ functions}
For the general notions on $SBV$ and $GSBV$ functions and their  properties we refer to \cite[Section 4]{Ambrosio-Fusco-Pallara:2000}. For  $\Omega \subset \R^d$ open and $m \in \N$, we define $GSBV^2(\Omega;\R^m)$ as in  \eqref{eq: space2}, for general $m$.  We denote by $\nabla y$ the approximate differential and by  $J_y$  the set of approximate jump points of $y$, which is an $\mathcal{H}^{d-1}$-rectifiable set.  We recall that $GSBV^2(\Omega;\R^m)$ is a vector space, see  \cite[Proposition 2.3]{DalMaso-Francfort-Toader:2005}. \BBB In a similar fashion, we  say $y \in SBV^2(\Omega;\R^m)$ if $y \in SBV(\Omega;\R^m)$, $\nabla y \in L^2(\Omega;\R^{m \times d})$, and $\mathcal{H}^{d-1}(J_y)< + \infty$. \EEE

We define  $GSBV^2_2(\Omega;\R^m)$ as in  \eqref{eq: space}, for general $m$. For $m=1$ we write $GSBV^2_2(\Omega)$.   \BBB By definition, \EEE $\nabla y \in GSBV^2(\Omega;\R^{m \times d})$, and we use the notation  $\nabla^2 y$ and $J_{\nabla y}$ for the approximate differential and the jump set of $\nabla y$, respectively. Applying \cite[Proposition 2.3]{DalMaso-Francfort-Toader:2005}  on $y$ and $\nabla y$, we find that $GSBV^2_2(\Omega;\R^m)$ is a vector space. The following  result is the key ingredient for the proof of  Proposition \ref{prop: relaxation}. 

\begin{theorem}[Compactness in $GSBV^2_2$]\label{th: GSBV2 comp}
Let $\Omega \subset \R^d$ be open and bounded, and let $m \in \N$.  Let $(y_n)_n$ be a sequence in $GSBV^2_2(\Omega;\R^m)$. Suppose that there exists a  continuous, increasing function $\psi: [0,\infty) \to [0,\infty)$ with $\lim_{t \to \infty} \psi(t) = + \infty$ such that 
$$\sup_{n \in \N}  \Big(\int_{\Omega} \psi(|y_n|)\, dx + \int_{\Omega} |\nabla^2 y_n|^2   \, dx  + \mathcal{H}^{d-1}(J_{y_n} \cup J_{\nabla y_n}) \Big) < + \infty.$$
Then there exist a subsequence, still denoted by $(y_n)_n$, and a function $y \in [GSBV(\Omega)]^m$ with $\nabla y \in GSBV^2(\Omega;\R^{m \times d})$ such that for all \BBB $0 < \gamma_2 \le \gamma_1 \le 2\gamma_2$ \EEE there holds
\begin{align}\label{eq: main compi-resu}
{\rm (i)} & \ \ y_n \to y \text{ a.e.\ in $\Omega$}, \notag\\
{\rm (ii)} & \ \ \nabla y_n \to \nabla y \text{ a.e.\ $\Omega$},\notag\\
{\rm (iii)} & \ \  \nabla^2 y_n \rightharpoonup \nabla^2 y \text{ weakly in $L^2(\Omega;\R^{m \times d \times d})$},\notag\\
{\rm (iv)} & \ \ \gamma_1\mathcal{H}^{d-1}(J_y) + \gamma_2 \mathcal{H}^{d-1}(J_{\nabla y} \setminus J_y) \le \liminf_{n \to \infty} \big(   \gamma_1\mathcal{H}^{d-1}(J_{y_n}) + \gamma_2 \mathcal{H}^{d-1}(J_{\nabla y_n} \setminus J_{y_n})\big).
\end{align}
If in addition $\sup_{n \in \N} \Vert \nabla y_n \Vert_{L^2(\Omega)} < + \infty$, then $y \in  GSBV^2_2(\Omega;\R^m)$.
\end{theorem}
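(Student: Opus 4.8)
The plan is to apply the recent compactness result of \cite{Manuel} --- which extends Ambrosio's classical $GSBV$ compactness theorem to sequences without a priori $L^\infty$ or $L^1$ bounds --- twice: once to the sequence $(y_n)_n$ itself, and once to the sequence of gradients $(\nabla y_n)_n$. First I would observe that the hypothesis $\sup_n \int_\Omega \psi(|y_n|)\,dx < +\infty$ with $\psi$ increasing and diverging provides exactly the equi-integrability-type control needed in \cite{Manuel} to extract a subsequence and a limit $y \in GSBV(\Omega;\R^m)$ with $y_n \to y$ a.e.\ in $\Omega$ (giving (i)) and $\nabla y_n \weakly \nabla y$ weakly in $L^1$, together with lower semicontinuity of $\mathcal{H}^{d-1}(J_\cdot)$. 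The bound on $\mathcal{H}^{d-1}(J_{y_n})$ is available since $\mathcal{H}^{d-1}(J_{y_n}) \le \mathcal{H}^{d-1}(J_{y_n}\cup J_{\nabla y_n})$ is uniformly bounded.

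Next I would turn to the sequence $(\nabla y_n)_n$. We have $\nabla y_n \in GSBV^2(\Omega;\R^{m\times d})$ with $\mathcal{H}^{d-1}(J_{\nabla y_n})$ uniformly bounded and $\nabla(\nabla y_n) = \nabla^2 y_n$ bounded in $L^2(\Omega;\R^{m\times d\times d})$, hence bounded in $L^1$. To invoke \cite{Manuel} (or rather the classical Ambrosio theorem, once we have a replacement for an $L^1$-bound on the functions $\nabla y_n$ themselves) I need some control on $\|\nabla y_n\|$ that prevents escape to infinity. Here the point is that, since $y_n \to y$ a.e.\ and $\nabla y_n \weakly \nabla y$ weakly in $L^1(\Omega;\R^{m\times d})$ with $\mathcal{H}^{d-1}(J_{\nabla y_n})$ bounded, one obtains via the generalized compactness that a subsequence of $\nabla y_n$ converges a.e.\ to some $v \in GSBV(\Omega;\R^{m\times d})$; by uniqueness of the weak limit $v = \nabla y$ a.e., giving (ii). The $L^2$-bound on $\nabla^2 y_n = \nabla(\nabla y_n)$ then yields $\nabla^2 y_n \weakly \nabla^2 y$ weakly in $L^2(\Omega;\R^{m\times d\times d})$ by weak lower semicontinuity and identification of the limit as the distributional/approximate gradient of $v=\nabla y$, which is (iii). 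Thus $y \in GSBV(\Omega;\R^m)$ with $\nabla y \in GSBV^2(\Omega;\R^{m\times d})$. If in addition $\sup_n \|\nabla y_n\|_{L^2(\Omega)} < +\infty$, then by the a.e.\ convergence and Fatou (or weak $L^2$-lower semicontinuity) $\nabla y \in L^2(\Omega;\R^{m\times d})$, and since also $\mathcal{H}^{d-1}(J_y) < +\infty$ we get $y \in GSBV^2_2(\Omega;\R^m)$ as claimed.

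The remaining and more delicate point is the refined lower semicontinuity inequality (iv), which couples the jump sets of $y$ and of $\nabla y$ with the constraint $0 < \gamma_2 \le \gamma_1 \le 2\gamma_2$. The natural route is to reduce it to the lower semicontinuity of the single functional $z \mapsto \gamma_1 \mathcal{H}^{d-1}(J_z) + \gamma_2\mathcal{H}^{d-1}(J_{\nabla z}\setminus J_z)$ established in the theory of second-order free-discontinuity functionals \cite{Carriero2} (cited in the excerpt as Theorem \ref{th: GSBV2 comp}'s source, via \cite{Carriero1, Carriero2}). The constraint $\gamma_1 \le 2\gamma_2$ is precisely what makes this surface energy density subadditive/BV-elliptic, so that the direct method applies; the constraint $\gamma_2 \le \gamma_1$ reflects that the jump of $y$ is "at least as expensive" as the extra jump of $\nabla y$ off $J_y$. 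Concretely I would apply the $GSBV$ slicing/localization technique: fix an open $A \Subset \Omega$, use the a.e.\ convergences from (i)--(iii), and invoke \cite[relevant theorem]{Carriero2} on $A$; then let $A \uparrow \Omega$. The main obstacle I anticipate is handling the interaction term $\mathcal{H}^{d-1}(J_{\nabla y}\setminus J_y)$ under the mere convergence in measure --- one must ensure no jump mass of $\nabla y_n$ that sits on $J_{y_n}$ (and is therefore only counted with weight $\gamma_1$, not $\gamma_1+\gamma_2$) is "lost" or double-counted in the limit; this is exactly where the inequality $\gamma_1 \le 2\gamma_2$ enters and where one must lean on the structure result for Caccioppoli partitions (Theorem \ref{th: local structure}) and the fine properties of $GSBV^2_2$ functions rather than on soft arguments.
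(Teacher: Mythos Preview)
There is a genuine gap in your plan, and it appears already in the very first step. You propose to apply the compactness result of \cite{Manuel} (or Ambrosio's theorem) to the sequence $(y_n)_n$ in $GSBV$. But any such first-order compactness statement requires, besides control of the jump set, a uniform bound on $\nabla y_n$ in some $L^p(\Omega)$ with $p>1$. The hypotheses of Theorem~\ref{th: GSBV2 comp} give you \emph{no} bound on $\nabla y_n$ whatsoever: you only have the $\psi$-bound on $y_n$, the $L^2$-bound on $\nabla^2 y_n$, and the bound on $\mathcal{H}^{d-1}(J_{y_n}\cup J_{\nabla y_n})$. The $L^2$-control on $\nabla y_n$ is explicitly listed as an \emph{additional} assumption yielding the stronger conclusion $y\in GSBV^2_2$; without it, the first application of \cite{Manuel} cannot even start. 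The same problem reappears in your second step, where you want to apply compactness to $(\nabla y_n)_n$: here $\nabla(\nabla y_n)=\nabla^2 y_n$ is bounded in $L^2$, but you have no zeroth-order control on $\nabla y_n$ itself, and your suggestion that it follows ``via the generalized compactness'' from the (not yet established) weak $L^1$-convergence is circular.

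The paper's proof supplies exactly the missing ingredient: an interpolation between the zeroth-order and second-order bounds. Following \cite{Carriero2}, one truncates $y_n$ via smooth cutoffs $\varphi_k$ so that $\|\varphi_k\circ y_n\|_{L^1(\Omega)}\le k\,\mathcal{L}^d(\Omega)$; an interpolation inequality (\cite[(4.8)]{Carriero2}) then uses the $L^2$-bound on $\nabla^2 y_n$ together with this $L^1$-bound to obtain $BV_{\rm loc}$-control on $\varphi_k\circ y_n$, from which one extracts a.e.\ limits $w_k$. The $\psi$-bound is used only at this point, to show $\mathcal{L}^d(\{|w_k|\ge k-1\})\to 0$ and hence to patch the $w_k$ into a global limit $y$ with $y_n\to y$ a.e. Once (i) is available in this way, items (ii)--(iv) are taken directly from \cite[Theorem~4.4, Theorem~5.13, Remark~5.14]{Carriero2}. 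Your intuition about (iv) and the role of the constraint $\gamma_1\le 2\gamma_2$ is correct, but the substantive work is done in \cite{Carriero2} and need not be reproved; the only genuine adaptation required here is the truncation-plus-interpolation argument replacing the $L^2$-bound on $y_n$ by the weaker $\psi$-bound.
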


\begin{proof}
First, we observe that it suffices to treat the case $m=1$ since otherwise one may argue componentwise,  \BBB see particularly \cite[Lemma 3.1]{Francfort-Larsen:2003} how to deal with property (iv). \EEE The result has been proved in \cite[Theorem 4.4, Theorem 5.13, Remark 5.14]{Carriero2} with the only difference that we \BBB just \EEE assume $\sup_{n \in \N}  \int_{\Omega} \psi(|y_n|)\, dx < + \infty $ here instead of $\sup_{n \in \N} \Vert y_n \Vert_{L^2(\Omega)} < + \infty$. We briefly indicate the necessary adaptions in the proof of \cite[Theorem 4.4]{Carriero2} for $m=1$. \BBB To ease comparison with \cite{Carriero2}, we point out that in that paper the notation $GSBV^2(\Omega)$ is used for functions $u$  \EEE with $u \in GSBV(\Omega)$ and $\nabla u \in [GSBV(\Omega)]^d$.  

For $k \in \N$, we define some  $\varphi_k \in C^2(\R)$ by $\varphi_k(t) = t$ for $t \in [-k+1,k-1]$, $|\varphi_k(t)| = k $ for $|t| > k+1$, and $0 \le \varphi_k' \le 1$. By $\Vert \varphi_k \circ y_n \Vert_{L^1(\Omega)} \le k \mathcal{L}^d(\Omega) $ and by using an interpolation inequality one can check that $(\varphi_k \circ y_n)_n$ is bounded in $BV_{\rm loc}(\Omega)$, see \cite[(4.8)]{Carriero2}. Therefore, \BBB by a diagonal argument  there exist a subsequence of $(y_n)_n$ and \EEE  functions $w_k \in BV_{\rm loc}(\Omega)$ \BBB for all $k \in \N$ \EEE such that
\begin{align}\label{eq: compi1}
\varphi_k \circ y_n \to  w_k \ \ \ \text{a.e.\ in $\Omega$ \, for all $k \in \N$}.
\end{align}
Since $\psi$ is continuous and increasing, and $|\varphi_k(t)| \le |t|$ for all $t \in \R$, we also get by Fatou's lemma
\begin{align}\label{eq: compi2}
\Vert \psi(|w_k|) \Vert_{L^1(\Omega)} \le \liminf_{n \to \infty} \Vert \psi(|\varphi_k \circ y_n|) \Vert_{L^1(\Omega)} \le \sup_{n \in \N} \int_{\Omega} \psi(|y_n|)\, dx < + \infty.
\end{align}
Let $E_k = \lbrace |w_k| < k-1 \rbrace$.  The properties of $\varphi_k$ along with \eqref{eq: compi1} imply
\begin{align}\label{eq: compi3}
y_n \to w_k  \ \ \ \text{a.e.\ in $E_k$ for all $k \in \N$}, \ \ \ \ \     w_k = w_l \ \ \    \text{ on $E_k$ for all $k \le l$. }  
\end{align}
By using \eqref{eq: compi2} we  observe that  $\mathcal{L}^d(\Omega \setminus E_k) \to 0$ as $k \to \infty$ since $\lim_{t \to \infty} \psi(t) = + \infty$. This  together with  \eqref{eq: compi3} shows that the measurable function $y: \Omega \to \R$ defined by $y := \lim_{k\to \infty} w_k$ satisfies $y = w_k$ on $E_k$ for all $k \in \N$ and therefore 
$$y_n \to y  \ \ \ \text{a.e.\ in $\Omega$}.$$    
The rest of the proof starting with \cite[(4.10)]{Carriero2} remains unchanged. In \cite{Carriero2}, it has been shown that $y \in GSBV(\Omega)$ and $\nabla y \in [GSBV(\Omega)]^d$. Since $ \nabla^2 y\in L^2(\Omega;\R^{d\times d})$ and $\mathcal{H}^{d-1}(J_{\nabla y}) < +\infty$, we actually get $\nabla y \in GSBV^2(\Omega;\R^d)$. Finally, given an additional control on $(\nabla y_n)_n$ in $L^2$, we also find $ \nabla y \in L^2(\Omega;\R^d)$  and $\mathcal{H}^{d-1}(J_y) < +\infty$. This implies $y\in GSBV^2_2(\Omega)$,  see \eqref{eq: space}. 
\end{proof}

We now proceed with a version of Theorem \ref{th: GSBV2 comp} without a priori bounds on the functions. We also take boundary data into account.  The result relies on Theorem \ref{th: GSBV2 comp} and \cite{Manuel}.

\begin{theorem}[Compactness in $GSBV^2_2$ without a priori bounds]\label{th: comp}
Let $\Omega \subset \Omega' \subset \R^d$ be bounded Lipschitz domains, and let $m\in\N$. Let $g \in W^{2,\infty}(\Omega';\R^m)$. Consider $(y_n)_n  \subset GSBV^2_2(\Omega';\R^m)$ with $y_n = g$ on $\Omega' \setminus \overline{\Omega}$ and 
$$\sup_{n \in \N}  \Big(\int_{\BBB \Omega' \EEE } \big( |\nabla y_n|^2 + |\nabla^2 y_n|^2 \big) \, dx  + \mathcal{H}^{d-1}(J_{y_n} \cup J_{\nabla y_n}) \Big) < + \infty.$$

\noindent Then we find a  subsequence (not relabeled), modifications $(z_n)_n \subset GSBV^2_2(\Omega';\R^m)$ satisfying $z_n = g$  on $\Omega' \setminus \overline{\Omega}$ and
\begin{align}\label{eq: compi1-new}
{\rm (i)} & \ \ z_n = g \text{ on }   S_n:= \lbrace \nabla z_n \neq  \nabla  y_n \rbrace \cup  \lbrace \nabla^2 z_n \neq \nabla^2  y_n \rbrace, \ \ \ \ \text{where $\mathcal{L}^d(S_n) \to 0$ as $n \to \infty$},  \notag\\
{\rm (ii)} & \ \  \BBB \lim_{n \to \infty} \EEE \mathcal{H}^{d-1}\big(  \big( J_{z_n} \cup J_{\nabla z_n}  \big)\setminus  \big( J_{y_n} \cup J_{\nabla y_n}  \big)  \big) = 0,
\end{align}
as well as a limiting function  $y \in GSBV^2_2(\Omega';\R^m)$ with $y = g$ on $\Omega' \setminus \overline{\Omega}$ such that 
\begin{align}\label{eq: compi2-new}
{\rm (i)}& \ \ \text{$z_n \to y$ in measure on $\Omega'$,}\notag\\ 
{\rm (ii)}& \ \  \nabla z_n \to \nabla y \text{ a.e.\ $\Omega'$ and $\nabla z_n  \rightharpoonup \nabla y$   weakly in $L^2(\Omega'; \R^{m\times d})$}\notag\\
{\rm (iii)}& \ \ \text{$\nabla^2 z_n  \rightharpoonup \nabla^2 y$   weakly in $L^2(\Omega'; \R^{m\times d \times d})$}\notag\\
{\rm (iv)} & \ \ \mathcal{H}^{d-1}(J_y \cup J_{\nabla y}) \le \liminf_{n \to \infty}  \mathcal{H}^{d-1}(J_{z_n} \cup J_{\nabla z_n} ).
\end{align}

\end{theorem}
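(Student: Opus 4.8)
The plan is to combine the compactness theorem of \cite{Manuel}, which dispenses with the a priori $L^1$-bound on the functions in Ambrosio's $GSBV$-compactness theorem, with the second-order compactness statement of Theorem \ref{th: GSBV2 comp}. Since the latter is formulated for $\R^m$-valued maps, no reduction to $m=1$ is needed; if one prefers to invoke the scalar form of \cite{Manuel}, one argues componentwise and uses \cite[Lemma 3.1]{Francfort-Larsen:2003} for the lower-semicontinuity bound \eqref{eq: compi2-new}(iv), exactly as in the proof of Theorem \ref{th: GSBV2 comp}.

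\emph{First step (modification).} First I would apply \cite{Manuel} to $(y_n)_n$, regarded as a bounded sequence in $GSBV^2(\Omega';\R^m)$ with Dirichlet datum $g$ on $\Omega'\setminus\overline{\Omega}$; only the first-order bounds $\sup_n(\|\nabla y_n\|_{L^2(\Omega')} + \mathcal{H}^{d-1}(J_{y_n})) < +\infty$ enter at this point. This produces, along a subsequence, modifications $z_n\in GSBV^2(\Omega';\R^m)$ with $z_n = g$ on $\Omega'\setminus\overline{\Omega}$ which coincide with $y_n$ outside a set $S_n$ with $\mathcal{L}^d(S_n)\to 0$, which equal $g$ on $S_n$ (so, by locality of the approximate differentials, $\nabla z_n = \nabla y_n$ and $\nabla^2 z_n = \nabla^2 y_n$ $\mathcal{L}^d$-a.e.\ on $\Omega'\setminus S_n$, whence $\{\nabla z_n\neq\nabla y_n\}\cup\{\nabla^2 z_n\neq\nabla^2 y_n\}\subset S_n$ up to a null set), whose essential boundary $\partial^* S_n$ is contained in $J_{y_n}\cup J_{z_n}$ up to a set with $\mathcal{H}^{d-1}$-measure tending to zero, and which moreover satisfy $\sup_n\int_{\Omega'}\psi(|z_n|)\,dx < +\infty$ for some continuous increasing $\psi$ with $\psi(t)\to+\infty$ — precisely the coercivity hypothesis of Theorem \ref{th: GSBV2 comp}. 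Since on $S_n$ one has replaced $y_n$ by $g\in W^{2,\infty}(\Omega';\R^m)$ and $\mathcal{H}^{d-1}(\partial^* S_n) < +\infty$, the modifications inherit the second-order bounds: $\nabla z_n\in GSBV^2(\Omega';\R^{m\times d})$ with
$$\sup_n\Big(\|\nabla z_n\|_{L^2(\Omega')} + \|\nabla^2 z_n\|_{L^2(\Omega')} + \mathcal{H}^{d-1}(J_{z_n}\cup J_{\nabla z_n})\Big) < +\infty,$$
that is, $z_n\in GSBV^2_2(\Omega';\R^m)$; and, $g$ being smooth, the only new jumps of $z_n$ and of $\nabla z_n$ sit on $\partial^* S_n$, so $(J_{z_n}\cup J_{\nabla z_n})\setminus(J_{y_n}\cup J_{\nabla y_n})$ is contained, up to $\mathcal{H}^{d-1}$-null sets, in $\partial^* S_n\setminus(J_{y_n}\cup J_{\nabla y_n})$, whose $\mathcal{H}^{d-1}$-measure vanishes in the limit. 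This would give \eqref{eq: compi1-new}.

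\emph{Second step (limit passage).} In the second step I would feed $(z_n)_n$ into Theorem \ref{th: GSBV2 comp} (with coercive function $\psi$). This yields, along a further subsequence, a function $y\in GSBV(\Omega';\R^m)$ with $\nabla y\in GSBV^2(\Omega';\R^{m\times d})$ such that $z_n\to y$ and $\nabla z_n\to\nabla y$ a.e.\ on $\Omega'$, $\nabla^2 z_n\rightharpoonup\nabla^2 y$ weakly in $L^2(\Omega';\R^{m\times d\times d})$, and, taking $\gamma_1 = \gamma_2 = 1$ in \eqref{eq: main compi-resu}(iv), $\mathcal{H}^{d-1}(J_y\cup J_{\nabla y})\le\liminf_n\mathcal{H}^{d-1}(J_{z_n}\cup J_{\nabla z_n})$; the additional $L^2$-bound on $(\nabla z_n)_n$ upgrades $y$ to $GSBV^2_2(\Omega';\R^m)$ by the last assertion of Theorem \ref{th: GSBV2 comp}. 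Finally, $\mathcal{L}^d(\Omega') < +\infty$ turns a.e.\ convergence into convergence in measure, boundedness of $(\nabla z_n)_n$ in $L^2$ together with the a.e.\ convergence $\nabla z_n\to\nabla y$ yields weak convergence in $L^2$, and $z_n = g$ on $\Omega'\setminus\overline{\Omega}$ passes to the a.e.\ limit to give $y = g$ there. This establishes \eqref{eq: compi2-new} and finishes the proof.

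I expect the difficulty to be entirely concentrated in the first step and, ultimately, in \cite{Manuel}: the crucial feature one must extract is that the modification taming a possible blow-up of $y_n$ cuts the function essentially \emph{along the pre-existing discontinuity set} $J_{y_n}$, up to a surface of $\mathcal{H}^{d-1}$-measure $\to 0$ — only with this control does $\nabla z_n$ simultaneously remain in $GSBV^2$ with an $L^2$-bounded Hessian and avoid acquiring a non-negligible amount of new jump, as \eqref{eq: compi1-new}(ii) requires. Granted that, the second-order limit passage is the routine application of Theorem \ref{th: GSBV2 comp} carried out above.
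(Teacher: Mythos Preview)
Your approach is correct and matches the paper's: apply \cite{Manuel} to obtain well-behaved modifications, then feed these into Theorem~\ref{th: GSBV2 comp}. One imprecision worth flagging: you describe the modifications from \cite{Manuel} as ``coinciding with $y_n$ outside a set $S_n$''. In fact they have the form $z_n = g\chi_{R_n} + \sum_{j\ge 1}(y_n - t_j^n)\chi_{P_j^n}$ for a Caccioppoli partition $\Omega' = R_n\cup\bigcup_j P_j^n$ and translations $t_j^n\in\R^m$; outside $R_n$ they differ from $y_n$ by \emph{piecewise constants}, not by zero. This is precisely what produces the $\psi$-bound in spite of a possible blow-up of $y_n$, and it still gives $\nabla z_n = \nabla y_n$, $\nabla^2 z_n = \nabla^2 y_n$ outside $R_n$, so with $S_n = R_n$ your \eqref{eq: compi1-new}(i) is fine. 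For \eqref{eq: compi1-new}(ii), note that new jumps of $z_n$ can also sit on interfaces $\partial^* P_j^n\cap\partial^* P_k^n$, not only on $\partial^* S_n$; but \cite{Manuel} delivers $\mathcal{H}^{d-1}(J_{z_n}\setminus J_{y_n})\to 0$ directly, which covers this. The paper also derives the $\psi$-bound slightly differently: it first extracts from \cite{Manuel} convergence $z_n\to y$ in measure, and then invokes \cite[Remark~2.2]{Solombrino} to manufacture $\psi$ a posteriori. Your second step is identical to the paper's.
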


In general, it is indispensable to pass to modifications. Consider, e.g., the  sequence $y_n = n \chi_U$ for some set $U \subset \Omega$ of finite perimeter. The idea in  \cite[Theorem 3.1]{Manuel}, where this result is proved in the space $GSBV^2(\Omega;\R^m)$,  relies on constructing modifications $(z_n)_n$ by (cf.\ \cite[(37)-(38)]{Manuel})
\begin{align}\label{eq:modifications}
z_n =  g\chi_{R_n} +  \sum\nolimits_{j \ge 1} (y_n - t^{n}_j) \chi_{P_j^{n}}
\end{align}
for Caccioppoli partitions  $\Omega' = \bigcup_{j \ge 1} P_j^{n} \cup R_n$, and suitable translations  $(t_j^{n})_{j \ge 1} \subset \R^m$, where 
\begin{align}\label{eq:modifications2}
{\rm (i)}  & \ \  \lim_{n\to \infty}  \mathcal{L}^d(R_n) =0, \notag \\ 
   {\rm (ii)} & \ \  \lim_{n\to \infty} \mathcal{H}^{d-1}(J_{z_n} \setminus J_{y_n}) =  \lim_{n\to \infty} \mathcal{H}^{d-1}  \big((\partial^* R_n \cap \Omega') \setminus J_{y_n}\big) =0.
\end{align}

\begin{proof}[Proof of Theorem \ref{th: comp}]
We briefly indicate the necessary adaptions with respect to \cite[Theorem 3.1]{Manuel}  to obtain the result in the frame of $GSBV^2_2(\Omega';\R^m)$ involving second derivatives. First, by \cite[Theorem 3.1]{Manuel} we find modifications $(z_n)_n$ as in \eqref{eq:modifications} satisfying $z_n = g$  on $\Omega' \setminus \overline{\Omega}$ and   $y \in GSBV^2(\Omega';\R^m)$ such that $z_n \to y$ in measure on $\Omega'$, up to passing to a subsequence.   By \eqref{eq:modifications2} we get  \eqref{eq: compi1-new}.

As $z_n \to y$ in measure on $\Omega'$,  \cite[Remark 2.2]{Solombrino} implies that there   exists a  continuous, increasing function $\psi: [0,\infty) \to [0,\infty)$ with $\lim_{t \to \infty} \psi(t) = + \infty$ such that up to subsequence (not relabeled)  $\sup_{n \in \N}\int_{\Omega'} \psi(|z_n|)\, dx < + \infty$. \BBB Moreover, by the assumptions on $y_n$, \eqref{eq: compi1-new}, and the fact that $g \in W^{2,\infty}(\Omega';\R^m)$ we get that $\nabla z_n$ and $\nabla^2 z_n$ are uniformly controlled in $L^2$, as well as $\sup_{n \in \N} \mathcal{H}^{d-1}(J_{z_n} \cup J_{\nabla z_n})<+\infty$. \EEE Then Theorem \ref{th: GSBV2 comp} yields  $y \in GSBV^2_2(\Omega';\R^m)$. Along with \eqref{eq: main compi-resu} for $\gamma_1 = \gamma_2 $ we also get \eqref{eq: compi2-new}, apart from the weak convergence of $(\nabla z_n)_n$. The weak convergence readily follows from   $\sup_{n \in \N} \Vert \nabla z_n\Vert_{L^2(\Omega')} \le \sup_{n \in \N} \Vert \nabla y_n\Vert_{L^2(\Omega')} + \BBB \Vert \nabla g\Vert_{L^2(\Omega')} \EEE  < +\infty$. 
\end{proof}

\subsection{$GSBD^2$ functions}\label{sec: GSBD}
 We refer the reader to \cite{ACD} and \cite{DalMaso:13} for the definition, notations,  and basic properties of $SBD$ and $GSBD$ functions, respectively. Here, we only recall briefly  some  relevant notions which can be defined for generalized  functions of bounded deformation: let $\Omega \subset \R^d$ open and bounded. In \cite[Theorem 6.2 and Theorem 9.1]{DalMaso:13} it is shown that  for $ u \in GSBD(\Omega)$ the jump set $J_u$  is $\mathcal{H}^{d-1}$-rectifiable and \BBB that an \EEE approximate symmetric differential $e(u)(x)$ exists at $\mathcal L^{d}$-a.e.\ $x\in \Omega$. We define the space $GSBD^2(\Omega)$ by
$$
GSBD^2(\Omega):= \{u \in GSBD(\Omega): e(u) \in L^2 (\Omega; \mathbb R_{\mathrm{sym}}^{d\times d})\,,\,\mathcal{H}^{d-1}(J_u) < +\infty\}\,.
$$
The space $GSBD^2(\Omega)$ is a vector subspace of the vector space of $\mathcal{L}^d$-measurable function, see \cite[Remark 4.6]{DalMaso:13}. Moreover, there holds $GSBV^2(\Omega;\R^d) \subset GSBD^2(\Omega)$. The following compactness result in $GSBD^2$ has been proved in \cite{Crismale}.

\begin{theorem}[$GSBD^2$ compactness]\label{th: crismale-comp}
\BBB Let $\Omega \subset \R^d$ be open, bounded. \EEE Let $(u_n)_n \subset  GSBD^2(\Omega)$ be a sequence satisfying
$$ \sup\nolimits_{n\in \N} \big( \Vert e(u_n) \Vert_{L^2(\Omega)} + \mathcal{H}^{d-1}(J_{u_n})\big) < + \infty.$$
Then there exists a subsequence (not relabeled) such that the set  $A := \lbrace x\in \Omega: \, |u_n(x)| \to \infty \rbrace$ has finite perimeter, and \BBB there exists \EEE $u \in GSBD^2(\Omega)$ such that 
\begin{align}\label{eq: at crsimale comp} 
{\rm (i)} & \ \ u_n \to u \  \ \ \ \text{ in measure on } \Omega \setminus A, \notag \\ 
{\rm (ii)} & \ \ e(u_n) \rightharpoonup e(u) \ \ \ \text{ weakly in } L^2(\Omega \setminus A; \R^{d \times d}_{\rm sym}),\notag \\
{\rm (iii)} & \ \ \liminf_{n \to \infty} \mathcal{H}^{d-1}(J_{u_n}) \ge \mathcal{H}^{d-1}(J_u \cup \BBB (\partial^*A \cap \Omega) \EEE ).
\end{align}
\end{theorem}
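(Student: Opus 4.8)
The plan is to prove Theorem~\ref{th: crismale-comp}, which is the $GSBD^2$ compactness result of \cite{Crismale}, by one-dimensional slicing, so I first fix the relevant notation. For $\xi\in S^{d-1}$ set $\Pi^\xi=\{y\in\R^d:y\cdot\xi=0\}$, and for $y\in\Pi^\xi$ write $\Omega^\xi_y=\{t\in\R:y+t\xi\in\Omega\}$ and $u^\xi_y(t)=u(y+t\xi)\cdot\xi$ for the one-dimensional slices of $u\in GSBD^2(\Omega)$. By the slicing characterization of $GSBD$ (see \cite{DalMaso:13}), for $\mathcal{H}^{d-1}$-a.e.\ $y$ one has $(u_n)^\xi_y\in SBV^2(\Omega^\xi_y)$ with slice derivative $e(u_n)(y+t\xi)\xi\cdot\xi$; integrating in $y$ and using $\int_{\Pi^\xi}\#J_{(u_n)^\xi_y}\,d\mathcal{H}^{d-1}(y)=\int_{J_{u_n}}|\nu_{u_n}\cdot\xi|\,d\mathcal{H}^{d-1}\le\mathcal{H}^{d-1}(J_{u_n})$ together with Fubini, the energy bound yields
\[
\sup_{n}\int_{\Pi^\xi}\Big(\#J_{(u_n)^\xi_y}+\big\Vert\big((u_n)^\xi_y\big)'\big\Vert_{L^2(\Omega^\xi_y)}^2\Big)\,d\mathcal{H}^{d-1}(y)<+\infty\qquad\text{for every }\xi\in S^{d-1}.
\]

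Next I would isolate a purely one-dimensional compactness lemma: if $(w_n)\subset SBV^2(I)$ on a bounded interval $I$ satisfies $\sup_n(\Vert w_n'\Vert_{L^2(I)}^2+\#J_{w_n})<\infty$, then along a subsequence the jump points of $w_n$ converge and split $I$ into finitely many subintervals; on each of them $w_n$ is an $H^1$ function with equibounded $L^2$-derivative, hence $\tfrac{1}{2}$-H\"older with controlled oscillation, so either its levels stay bounded --- and then $w_n\to w$ a.e.\ with $w_n'\rightharpoonup w'$ weakly in $L^2$ --- or $|w_n|\to\infty$ on it. Writing $A^1_I$ for the union of the diverging subintervals, one obtains a limit $w$ with $w_n\to w$ a.e.\ on $I\setminus A^1_I$ and, counting each limit jump point at most once, $\#J_w+\#(\partial A^1_I\cap I)\le\liminf_n\#J_{w_n}$. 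The only mechanism producing noncompactness is thus the finitely many diverging ``levels''.

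Then I would set $A:=\{x\in\Omega:|u_n(x)|\to\infty\}$ and, fixing a countable dense set $D\subset S^{d-1}$ containing a basis, apply the one-dimensional lemma slicewise; a diagonal extraction over $D$ and (by a further countable exhaustion) over slices produces a single subsequence along which, for every $\xi\in D$ and $\mathcal{H}^{d-1}$-a.e.\ $y$, the diverging set of $((u_n)^\xi_y)_n$ equals $A^\xi_y$. Since $\int_{\Pi^\xi}\#\partial(A^\xi_y)\,d\mathcal{H}^{d-1}(y)\le\liminf_n\int_{\Pi^\xi}\#J_{(u_n)^\xi_y}\,d\mathcal{H}^{d-1}(y)\le\liminf_n\mathcal{H}^{d-1}(J_{u_n})<\infty$ for all $\xi\in D$, the slicing criterion for sets of finite perimeter (\cite[Section~3.11]{Ambrosio-Fusco-Pallara:2000}) shows that $A$ has finite perimeter in $\Omega$. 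On $\Omega\setminus A$ the slices of $u_n$ in the directions of $D$ do not diverge, so the lemma gives a.e.\ convergence of these slices; reconstructing a $GSBD$ function from its slices (\cite{DalMaso:13}) upgrades this to $u_n\to u$ in measure on $\Omega\setminus A$ for some measurable $u$, which I extend by $0$ on $A$. The bound $\Vert e(u_n)\Vert_{L^2(\Omega)}\le C$ gives, along a further subsequence, $e(u_n)\rightharpoonup g$ weakly in $L^2(\Omega\setminus A;\R^{d\times d}_{\rm sym})$, and slicing identifies $g=e(u)$ and $u^\xi_y\in SBV^2$; hence $u\in GSBD^2(\Omega)$ and (i)--(ii) hold.

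For the lower semicontinuity (iii), finally, I would observe that for $\xi\in D$ and a.e.\ $y$ the set $(J_u\cup\partial^* A)^\xi_y$ consists of jump points of $u^\xi_y$ or of boundary points of $A^\xi_y$, so the last estimate of the one-dimensional lemma yields $\#\big((J_u\cup\partial^* A)^\xi_y\big)\le\liminf_n\#J_{(u_n)^\xi_y}$; integrating in $y$, using Fatou and $\int_{\Pi^\xi}\#J_{(u_n)^\xi_y}\,d\mathcal{H}^{d-1}(y)\le\mathcal{H}^{d-1}(J_{u_n})$, and taking the supremum over finitely many directions of $D$ --- which recovers $\mathcal{H}^{d-1}(J_u\cup\partial^* A)$ since that set is $\mathcal{H}^{d-1}$-rectifiable --- gives the claim. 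The routine reductions to one dimension are not the difficulty; the hard part will be organizing the diagonal extraction so that the slicewise divergence sets are simultaneously consistent in all directions of $D$ and are exactly the slices of a single set $A$, and pushing the slicewise lower-semicontinuity inequalities --- with the $\partial^* A$ term captured alongside $J_u$ --- up to the full $(d-1)$-dimensional inequality through a careful choice of countably many directions.
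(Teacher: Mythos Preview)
The paper does not prove this theorem; it is quoted verbatim from \cite{Crismale} (Chambolle--Crismale) and used as a black box, so there is no in-paper argument to compare against. Your sketch does follow the slicing philosophy of that reference, but it has a genuine gap precisely where you flag ``the hard part.''

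The problem is the diagonal extraction. Your one-dimensional lemma requires, for each fixed $\xi$ and each fixed $y\in\Pi^\xi$, passing to a subsequence that depends on the slice. There are uncountably many slices, so no ``countable exhaustion over slices'' can produce a single subsequence along which the one-dimensional compactness holds for $\mathcal{H}^{d-1}$-a.e.\ $y$; different $y$'s may call for incompatible subsequences. Restricting to countably many directions $\xi\in D$ does not help, because for each such $\xi$ you still need the conclusion on almost every slice, not on countably many. In particular, the definition $A:=\{x:|u_n(x)|\to\infty\}$ is only meaningful \emph{after} a subsequence has been fixed, and you have not shown how to fix one for which the slicewise divergence sets are consistent.

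The proof in \cite{Crismale} avoids this entirely: one first composes $u_n$ with a bounded, smooth, injective map $\psi:\R^d\to\R^d$ (a $\tanh$-type transformation). The sequence $(\psi(u_n))_n$ is then bounded in $L^\infty$ with uniformly controlled slicing energy, so a single subsequence with $\psi(u_n)\to v$ in measure on all of $\Omega$ can be extracted by the compactness theory for bounded $GSBD$ sequences. The set $A$ is identified a posteriori as the set where $|v|$ hits the boundary of the range of $\psi$, and $u=\psi^{-1}(v)$ on $\Omega\setminus A$. Slicing is used only afterwards, for the identification $e(u_n)\rightharpoonup e(u)$ and for (iii), and at that stage no further subsequence is needed. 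Your lower-semicontinuity step (iii) is essentially right in spirit once (i)--(ii) are in hand, though the last passage --- recovering $\mathcal{H}^{d-1}(J_u\cup(\partial^*A\cap\Omega))$ from directional integrals --- needs the full localization argument for rectifiable sets, not just a supremum over finitely many directions.
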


We briefly remark that \RRR \eqref{eq: at crsimale comp}(i) \EEE   is slightly weaker with respect to \eqref{eq: compi2-new}(i) in Theorem \ref{th: comp} (or the corresponding version in $GSBV$, see \cite{Manuel}) in the sense that there might be a set $A$ where the sequence $(u_n)_n$ is \emph{unbounded}, cf.\ the example below Theorem \ref{th: comp}. This phenomenon is avoided in Theorem \ref{th: comp} by passing to suitable modifications which consists in subtracting piecewise constant functions, see \eqref{eq:modifications}. We point out that an analogous result in $GSBD^2$ is so far only available in dimension two, see \cite[Theorem 6.1]{Solombrino}. We now state two density results.

\begin{theorem}[Density]\label{th: crismale-density}
Let $\Omega \subset \R^d$ be a bounded Lipschitz domain.  Let $u \in GSBD^2(\Omega)$. Then there exists a sequence $(u_n)_n \subset  SBV^2(\Omega; \R^d) \cap L^\infty(\Omega;\R^d)$ such that each $J_{u_n}$ is closed   and included in a finite union of closed connected pieces of $C^1$ hypersurfaces, each $u_n$ belongs to
$C^{\infty}(\overline{\Omega} \setminus J_{u_n};\R^d) \cap W^{m,\infty}({\Omega} \setminus J_{u_n};\R^d) $ for every $m \in \N$, and the following properties hold:
\begin{align*}
\begin{split}
{\rm (i)} & \ \ u_n \to  u  \text{ in measure on } \Omega,\\
{\rm (ii)} & \ \ \Vert e(u_n) - e(u) \Vert_{L^2(\Omega)} \to 0,\\
{\rm (iii)} &  \ \ \mathcal{H}^{d-1}(J_{u_n} \triangle J_u) \to  0.
\end{split}
\end{align*}
\end{theorem}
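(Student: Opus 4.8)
The plan is to reduce the statement to the density result of Chambolle–Crismale for $GSBD^2$ functions and then to upgrade the regularity of the approximants by a standard mollification argument away from the (closed) jump set. First I would invoke the main density theorem from \cite{Crismale} (or \cite{Chambolle-Conti-Francfort:2018} in the relevant form): given $u \in GSBD^2(\Omega)$ with $\Omega$ a bounded Lipschitz domain, there exists a sequence $(w_n)_n \subset SBV^2(\Omega;\R^d) \cap L^\infty(\Omega;\R^d)$ such that each $J_{w_n}$ is (essentially) closed and contained in a finite union of closed connected pieces of $C^1$ hypersurfaces, each $w_n$ is smooth on $\Omega \setminus J_{w_n}$, and $w_n \to u$ in measure on $\Omega$, $e(w_n) \to e(u)$ in $L^2(\Omega)$, $\mathcal{H}^{d-1}(J_{w_n} \triangle J_u) \to 0$. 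This already gives (i)–(iii); the only missing point is the quantitative global regularity $w_n \in C^\infty(\overline{\Omega} \setminus J_{w_n};\R^d) \cap W^{m,\infty}(\Omega \setminus J_{w_n};\R^d)$ for every $m$, i.e.\ smoothness up to the boundary $\partial\Omega$ and boundedness of all derivatives.

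The second step is to fix $n$ and regularize $w_n$ near $\partial\Omega$ and near the "seams" of the hypersurface pieces. Since $J_{w_n}$ is closed and $\overline{\Omega}$ is compact, $\overline{\Omega} \setminus J_{w_n}$ decomposes into finitely many connected open pieces, on each of which $w_n$ is smooth in the interior; the issue is only regularity up to $\partial\Omega$. Here I would use that $\Omega$ is Lipschitz to extend $w_n$ slightly across $\partial\Omega$ on each piece (a bi-Lipschitz reflection, or simply the extension already provided by the construction in \cite{Crismale}, where the approximants are built on a neighborhood of $\overline{\Omega}$), and then convolve with a small mollifier $\rho_\delta$ supported at a scale $\delta = \delta(n) \ll \mathrm{dist}$-type quantity so that the mollification takes place strictly away from $J_{w_n}$. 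The mollified function $\tilde w_{n}$ is then $C^\infty$ on a neighborhood of $\overline{\Omega} \setminus J_{w_n}$, hence in $W^{m,\infty}(\Omega \setminus J_{w_n};\R^d)$ for all $m$ (all derivatives are continuous on a compact set), while the closedness of $J_{w_n}$ and its containment in a finite union of $C^1$ pieces are preserved since we did not touch a neighborhood of the jump set. Choosing $\delta(n) \to 0$ fast enough ensures $\|e(\tilde w_n) - e(w_n)\|_{L^2} \to 0$, $\tilde w_n - w_n \to 0$ in measure, and $J_{\tilde w_n} = J_{w_n}$, so the convergences (i)–(iii) are inherited. Relabeling $\tilde w_n$ as $u_n$ yields the claim.

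The main obstacle is the uniform control of all derivatives up to the boundary, i.e.\ showing membership in $W^{m,\infty}(\Omega \setminus J_{u_n};\R^d)$ for \emph{every} $m$ rather than just smoothness in the open interior. This forces the mollification/extension step to be done carefully near the places where the $C^1$ hypersurface pieces meet $\partial\Omega$ or meet each other: there one must ensure the mollification scale is chosen so that convolution never crosses a jump, which requires a lower bound on the distance (within $\overline{\Omega}$, along paths not crossing $J_{w_n}$) from the mollification region to $J_{w_n}$ — harmless since $J_{w_n}$ is closed and the regularization is purely local. A secondary (but genuinely routine) point is checking that the Lipschitz extension across $\partial\Omega$ does not create spurious jumps, which is handled because $w_n$ is already smooth in a collar neighborhood of $\partial\Omega$ away from $J_{w_n}$, so no new discontinuities appear. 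With these precautions the argument is essentially bookkeeping on top of the cited $GSBD^2$ density theorem.
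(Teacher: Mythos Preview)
Your overall strategy---invoke the Chambolle--Crismale density theorem and then upgrade the regularity of the approximants---is the same as the paper's. Two remarks are in order.

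First, the references you name are off: \cite{Crismale} is the $GSBD$ compactness paper and \cite{Chambolle-Conti-Francfort:2018} concerns non-interpenetration; neither contains the density statement you want. The correct source for the first step is \cite{Crismale2} (Chambolle--Crismale), whose Theorem~1.1 already yields approximants $u_n\in SBV^2(\Omega;\R^d)\cap L^\infty$ with $J_{u_n}$ closed and contained in a finite union of closed connected pieces of $C^1$ hypersurfaces, $u_n\in W^{1,\infty}(\Omega\setminus J_{u_n};\R^d)$, and the three convergences (i)--(iii). Note that this gives only $W^{1,\infty}$ off the jump set, not interior smoothness as you assume; so your second step must upgrade from $W^{1,\infty}$ to $C^\infty(\overline{\Omega}\setminus J_{u_n})\cap W^{m,\infty}$ for every $m$, not merely fix boundary behaviour.

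Second, rather than carrying out your extension-and-mollification scheme by hand, the paper simply cites a second result, \cite[Theorem~1.1]{Crismale3}, which does precisely this upgrade (from $SBD$ functions with the $W^{1,\infty}$-off-jump structure to the full $C^\infty\cap W^{m,\infty}$ regularity while preserving the jump-set geometry and the convergences). Your sketch of the mollification argument is along the right lines and is in fact close to what is done in \cite{Crismale3}, but the details you flag as ``bookkeeping'' (behaviour where the hypersurface pieces meet $\partial\Omega$, the claim that $\overline{\Omega}\setminus J_{w_n}$ splits into finitely many pieces---which is not true in general) are nontrivial, and the cleaner route is to quote \cite{Crismale3}.
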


\begin{proof}
The result follows by combining \cite[Theorem 1.1]{Crismale2} and \cite[Theorem 1.1]{Crismale3}. First, \cite[Theorem 1.1]{Crismale2} yields an approximation $u_n$ satisfying   $u_n \in \BBB SBV^2(\Omega;\R^d) \cap \EEE W^{1,\infty}({\Omega} \setminus J_{u_n};\R^d)$, and \BBB then \EEE \cite[Theorem 1.1]{Crismale3} gives the higher regularity. 
\end{proof}

 An adaption of the proof allows to impose boundary conditions on the approximating sequence. Suppose that the Lipschitz domains $\Omega \subset \Omega'$ satisfy the conditions introduced in \eqref{eq: density-condition2}-\eqref{eq: density-condition}. By $\mathcal{W}(\Omega;\R^d)$ we denote the space of all functions
$u \in SBV(\Omega;\R^d)$ such that $J_u$ is a finite union of disjoint $(d-1)$-simplices and $u \in W^{k,\infty}(\Omega \setminus J_u; \R^d)$ for every $k \in \N$.

\begin{theorem}[Density with boundary data]\label{th: crismale-density2}
Let $\Omega \subset \Omega' \subset  \R^d$  be bounded Lipschitz domains satisfying \eqref{eq: density-condition2}-\eqref{eq: density-condition}.  Let $g \in W^{r,\infty}(\Omega')$ for $r \in \N$.    Let $u \in GSBD^2(\Omega')$ with $u = g$ on $\Omega' \setminus \overline{\Omega}$. Then there exists a sequence of functions $(u_n)_n \subset  SBV^2(\Omega; \R^d)$, a sequence of neighborhoods $(U_n)_n \subset \Omega'$ of $\Omega' \setminus \Omega$, and a sequence of neighborhoods $(\Omega_n)_n \subset \Omega$ of $\Omega \setminus U_n$  such that $u_n = g$ on $\Omega' \setminus \overline{\Omega}$, $u_n|_{U_n} \in W^{r,\infty}(U_n;\R^d)$, and $u_n|_{\Omega_n} \in \mathcal{W}(\Omega_n;\R^d)$, and  the following properties hold:
\begin{align}\label{eq: dense-boundary}
{\rm (i)} & \ \ u_n \to  u  \text{ in measure on } \Omega',\notag\\
{\rm(ii)} & \ \ \Vert e(u_n) - e(u) \Vert_{L^2(\Omega')} \to 0,\notag\\
{\rm (iii)} &  \ \  \mathcal{H}^{d-1}(J_{u_n}) \to \mathcal{H}^{d-1}(J_u).
\end{align}
In particular, $u_n \in W^{r,\infty}(\Omega \setminus J_{u_n};\R^d)$.
\end{theorem}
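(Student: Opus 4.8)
The strategy is to reduce the statement with boundary data (Theorem~\ref{th: crismale-density2}) to the interior density result (Theorem~\ref{th: crismale-density}) via a reflection-and-dilation argument exploiting the geometric conditions \eqref{eq: density-condition2}--\eqref{eq: density-condition}. First I would extend $u$ to a slightly larger open set: since $u = g$ on $\Omega' \setminus \overline{\Omega}$ and $g \in W^{r,\infty}(\Omega')$, the function $u$ is already smooth near $\Omega' \setminus \overline{\Omega}$, so one can fix a Lipschitz domain $\widetilde{\Omega} \supset \overline{\Omega'}$ and extend $u$ to $\widetilde u \in GSBD^2(\widetilde{\Omega})$ with $\widetilde u = g$ (suitably extended, still in $W^{r,\infty}$) on $\widetilde{\Omega} \setminus \overline{\Omega}$, keeping $J_{\widetilde u} = J_u$ up to $\mathcal{H}^{d-1}$-null sets. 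Then I would apply the dilation $O_{\delta,x_0}$ from \eqref{eq: density-condition}: for small $\delta>0$ set $u^\delta(x) := u(O_{\delta,x_0}^{-1}(x))$, which pushes the jump set and the "non-Dirichlet" behaviour strictly into the interior of $\Omega$ near $\partial_D\Omega$, because \eqref{eq: density-condition} guarantees $O_{\delta,x_0}(\partial_D\Omega)\subset\Omega$; by continuity of the $GSBD^2$-data under such dilations, $u^\delta \to u$ in measure, $e(u^\delta)\to e(u)$ in $L^2$, and $\mathcal{H}^{d-1}(J_{u^\delta})\to\mathcal{H}^{d-1}(J_u)$ as $\delta\to0$ (here one uses $\mathcal{H}^{d-1}(N)=0$ and the decomposition of $\partial\Omega$ to avoid concentration of jump mass on the boundary). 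After a diagonal argument it thus suffices to approximate each $u^\delta$.

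\smallskip

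For fixed $\delta$, the point is that $u^\delta$ agrees with a fixed $W^{r,\infty}$ function on a genuine neighborhood $U$ of $\Omega'\setminus\Omega$ (including a collar around $\partial_D\Omega$). Now I would apply Theorem~\ref{th: crismale-density} on a slightly larger Lipschitz domain containing $\overline{\Omega'}$ to get approximants $v_n$ with polyhedral jump sets and $C^\infty$ (indeed $W^{m,\infty}$) regularity away from $J_{v_n}$, converging to $u^\delta$ in the three senses (i)--(iii). These $v_n$ need not respect the boundary datum, so the final step is a \emph{gluing} (cutoff) construction: using a smooth partition of unity subordinate to the collar $U$, interpolate between $g$ (on an inner part of $U$ near $\partial_D\Omega$) and $v_n$ (on $\Omega$ minus a thinner collar), producing $u_n$ with $u_n = g$ on $\Omega'\setminus\overline{\Omega}$, $u_n|_{U_n}\in W^{r,\infty}$, and $u_n|_{\Omega_n}\in\mathcal{W}(\Omega_n;\R^d)$ after triangulating the finitely many $C^1$ surface pieces of $J_{v_n}$ into $(d-1)$-simplices. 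Since $v_n\to u^\delta = g$ \emph{uniformly with all derivatives} on the collar (no jumps of $u^\delta$ there, and the convergence in Theorem~\ref{th: crismale-density} is in $W^{m,\infty}$ away from the limit jump set), the interpolation costs only a vanishing amount of extra elastic energy and introduces no extra jump mass in the limit; hence \eqref{eq: dense-boundary}(i)--(iii) are preserved.

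\smallskip

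\textbf{Main obstacle.} The delicate point is controlling the jump set during the gluing: one must ensure that the interpolation region, where $u_n$ transitions from $v_n$ to $g$, does not create new jumps that survive in the limit, and that the triangulation of the $C^1$-pieces of $J_{v_n}$ into $(d-1)$-simplices (to land in $\mathcal{W}$) can be done with $\mathcal{H}^{d-1}(J_{u_n})\to\mathcal{H}^{d-1}(J_u)$. Both issues are handled because, thanks to the dilation step, $\mathcal H^{d-1}$-almost none of the jump mass of $u^\delta$ lies near $\partial_D\Omega$, so the collar $U$ can be chosen disjoint from $\overline{J_{v_n}}$ for $n$ large; the interpolation then happens in a region where both $v_n$ and $g$ are Sobolev, yielding a Sobolev (jump-free) transition, and the simplicial refinement of a finite union of $C^1$ hypersurfaces increases $\mathcal H^{d-1}$ by an arbitrarily small amount that can be absorbed into the error as $n\to\infty$. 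The condition $\partial(\partial_D\Omega)=\partial(\partial_N\Omega)$ together with $\mathcal H^{d-1}(N)=0$ is exactly what makes the boundary decomposition compatible with such a collar construction without losing jump mass at the "seam" between Dirichlet and Neumann parts.
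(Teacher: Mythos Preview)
Your overall strategy---dilate via $O_{\delta,x_0}$ to push the jump set strictly inside $\Omega$, approximate, then glue back to the boundary datum $g$ by a cutoff---is indeed the route taken in the references the paper invokes. The paper's proof simply delegates: it cites \cite{Crismale2} for the approximation with $u_n=g$ in a neighborhood of $\Omega'\setminus\Omega$ (that reference performs precisely the dilation/extension argument you sketch), and then cites the cutoff-plus-Cortesani--Toader construction of \cite{Giacomini:2005,Cortesani-Toader:1999} to upgrade to the class $\mathcal{W}$. So at the level of ideas you are on the same track.

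There is, however, a genuine gap in your justification of the gluing step. You assert that Theorem~\ref{th: crismale-density} gives $v_n\to u^\delta$ ``uniformly with all derivatives'' on the collar and that $\overline{J_{v_n}}$ stays disjoint from the collar for $n$ large. Neither of these is part of the statement of Theorem~\ref{th: crismale-density}: that theorem only gives convergence in measure, $L^2$-convergence of $e(v_n)$, and $\mathcal{H}^{d-1}(J_{v_n}\triangle J_{u^\delta})\to 0$. The last condition does not prevent small pieces of $J_{v_n}$ from wandering into the collar, and the first two do not yield $W^{m,\infty}$ convergence there. Consequently, your cutoff $u_n=\varphi g+(1-\varphi)v_n$ may pick up uncontrolled contributions from $\operatorname{sym}(\nabla\varphi\otimes(g-v_n))$ and from spurious jumps of $v_n$ inside $\operatorname{supp}\nabla\varphi$. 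These issues \emph{can} be handled---the constructions in \cite{Crismale2,Crismale3} actually keep $J_{v_n}$ in a neighborhood of $J_{u^\delta}$ and are mollification-based, so strong convergence on the collar does hold---but this requires going into those proofs, not using Theorem~\ref{th: crismale-density} as a black box. The paper sidesteps the problem by reversing the order: it first obtains approximants that already satisfy $u_n=g$ near the boundary (so no gluing against an uncontrolled $v_n$ is needed), and only afterwards upgrades regularity in the interior via the cutoff/Cortesani--Toader step, where now both functions being interpolated agree with $g$ on the collar and the cutoff is harmless.
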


\begin{proof}
The fact that $u$ can be approximated by a sequence $(u_n)_n \subset SBV^2(\Omega';\R^d) \cap L^\infty(\Omega;\R^d)$ satisfying \eqref{eq: dense-boundary} and $u_n = g$ in a neighborhood of $\Omega' \setminus {\Omega}$ has been addressed in \cite[Proof of Theorem 5.4]{Crismale2}. Here, also the necessity of the geometric assumptions \eqref{eq: density-condition2}-\eqref{eq: density-condition} is discussed, see \cite[Remark 5.6]{Crismale2}. The fact that the approximating sequence can be chosen as in the statement then follows by applying on each $u_n$  a construction  very similar to the one of \cite[Proposition 2.5]{Giacomini:2005} along with a diagonal argument. This construction consists in a suitable cut-off construction and the application of the density result \cite{Cortesani-Toader:1999}. We also refer to \cite[Theorem 3.5]{SchmidtFraternaliOrtiz:2009}    for a similar statement. 
\end{proof}

\section{Proofs}\label{sec: proofs}

This section contains the \BBB proofs \EEE of our results.

\subsection{Relaxation and existence of minimizers for the nonlinear model}\label{sec: proofs-nonlinear}

In this subsection we prove Proposition \ref{prop: relaxation} and Theorem \ref{th: existence}.

\begin{proof}[Proof of Proposition \ref{prop: relaxation}]
For $y \in GSBV_2^2(\Omega;\R^d)$ we define   
\begin{align}\label{eq: 5L}
\mathcal{E}'_\eps(y) = \inf \big\{ \liminf\nolimits_{n \to \infty} \mathcal{E}_\eps(y_n,\Omega): y_n \to y \ \text{\rm  in measure on $\Omega$}\big\},
\end{align}
 and define $\overline{\mathcal{E}}_\eps(\cdot,\Omega)$ as in  \eqref{eq: relaxed energy}. We need to check  that $\mathcal{E}'_\eps = \overline{\mathcal{E}}_\eps(\cdot,\Omega)$. In the proof, we write $\tilde{\subset}$ and $\tilde{=}$ for brevity if the inclusion or the identity holds up to an $\mathcal{H}^{d-1}$-negligible set, respectively.
 
 \emph{Step 1: $\mathcal{E}'_\eps \ge \overline{\mathcal{E}}_\eps(\cdot,\Omega)$.} Since by definition $\overline{\mathcal{E}}_\eps(y,\Omega) \le {\mathcal{E}}_\eps(y,\Omega)$ for all $y \in GSBV^2_2(\Omega;\R^d)$, see   \eqref{rig-eq: Griffith en}, it suffices to confirm that $\overline{\mathcal{E}}_\eps(\cdot,\Omega)$ is lower semicontinous with respect to the convergence in measure. To see this, consider $(y_n)_n \subset GSBV^2_2(\Omega;\R^d)$ with $y_n \to y$ in measure $\Omega$ and $\sup_{n\in \N}\overline{\mathcal{E}}_\eps(y_n,\Omega) <+ \infty$. By using \cite[Remark 2.2]{Solombrino}, there   exists a  continuous, increasing function $\psi: [0,\infty) \to [0,\infty)$ with $\lim_{t \to \infty} \psi(t) = + \infty$ such that up to subsequence (not relabeled)  $\sup_{n \in \N}\int_{\Omega} \psi(|y_n|)\, dx < + \infty$. Then from Theorem \ref{th: GSBV2 comp} we obtain 
$$\overline{\mathcal{E}}_\eps(y,\Omega) \le \liminf_{n \to \infty} \overline{\mathcal{E}}_\eps(y_n,\Omega). $$
In fact, for the second and the third term in \eqref{eq: relaxed energy} we use \eqref{eq: main compi-resu}(iii) and (iv) \BBB for $\gamma_1=\gamma_2$, \EEE respectively. The first term in \eqref{eq: relaxed energy}  is lower semicontinuous by the continuity of $W$, \eqref{eq: main compi-resu}(ii), and Fatou's lemma. This shows that $\overline{\mathcal{E}}_\eps(\cdot,\Omega)$ is lower semicontinous and concludes the proof of  $\mathcal{E}'_\eps \ge \overline{\mathcal{E}}_\eps(\cdot,\Omega)$.

 \emph{Step 2: $\mathcal{E}'_\eps \le \overline{\mathcal{E}}_\eps(\cdot,\Omega)$.} In the proof, we will use the following argument several times: if $y_1,y_2 \in GSBV^2(\Omega;\R^d)$, then for a.e.\ $t \in \R$ there holds that $z:= y_1 + ty_2\in GSBV^2(\Omega;\R^d)$ satisfies $J_z = J_{y_1} \cup J_{y_2}$, see \cite[Proof of Lemma 3.1]{Francfort-Larsen:2003} or \cite[Proof of Lemma 4.5]{DalMaso-Francfort-Toader:2005} for such an argument. We point out that here  we exploit the fact that $GSBV^2(\Omega;\R^d)$ is a vector space.

 Observe that for  each $y \in GSBV^2_2(\Omega;\R^d)$ and each $\nu \in S^{d-1}$, the function $v := \nabla y \cdot \nu$   lies in $GSBV^2(\Omega;\R^d) \subset GSBD^2(\Omega)$. We can choose $\nu \in S^{d-1}$ such that  there holds $\mathcal{H}^{d-1}(J_v \triangle J_{\nabla y}) = 0$. We apply Theorem \ref{th: crismale-density} to approximate $v \in GSBD^2(\Omega)$ by a sequence   $(v_n)_n \subset  SBV^2(\Omega; \R^d)$ such that  $v_n \in W^{2,\infty}({\Omega} \setminus J_{v_n};\R^d)$ and 
\begin{align}\label{eq: jumpi1} 
\mathcal{H}^{d-1}(J_{v_n} \triangle J_{\nabla y} ) = \mathcal{H}^{d-1}(J_{v_n} \triangle J_{v} ) \to 0
\end{align}
as $n \to \infty$. We point out that $J_{\nabla v_n} \tilde{\subset} J_{v_n}$ since   $v_n \in W^{2,\infty}({\Omega} \setminus J_{v_n};\R^d)$.  \BBB Using \EEE  $v_n \in W^{2,\infty}({\Omega} \setminus J_{v_n};\R^d)$ we can choose  a  sequence $(\eta_n)_n$ with $\eta_n \to 0$ such that $z_n := y + \eta_n v_n \in GSBV^2_2(\Omega;\R^d)$ satisfies $J_{z_n} \tilde{=} J_y \cup J_{v_n}$ \BBB and there holds \EEE $z_n \to y$ in measure on $\Omega$. \BBB By \eqref{eq: jumpi1}, the continuity of $W$, $J_{z_n} \tilde{=} J_y \cup J_{v_n}$, and $J_{\nabla z_n} \tilde{\subset} J_{\nabla y} \cup J_{v_n}$ we get \EEE
\begin{align}\label{eq: jumpi4}
\limsup\nolimits_{n\to \infty}\overline{\mathcal{E}}_\eps(z_n,\Omega) \le \overline{\mathcal{E}}_\eps(y,\Omega). 
\end{align}
 As $J_{z_n} \tilde{=} J_y \cup J_{v_n}$, $J_{\nabla y} \tilde{=} J_{v}$, and $J_{\nabla v_n} \tilde{\subset} J_{v_n}$, we also get   
\begin{align}\label{eq: jumpi3}
J_{\nabla z_n} \setminus J_{z_n} \,\tilde{\subset}\,  (J_{\nabla y} \cup J_{\nabla v_n}) \setminus  (J_y \cup J_{v_n}) \, \tilde{\subset} \, J_v \setminus J_{v_n}.
\end{align}
In view of \eqref{eq: jumpi1}, \BBB by a Besicovitch covering argument \EEE we can cover the rectifiable sets $J_v \setminus J_{v_n}$ by sets of finite perimeter  \BBB $(E_n)_n \subset \subset \Omega$, \EEE each of which being a countable union of balls with radii smaller than $\frac{1}{n}$, such that 
\begin{align}\label{eq: jumpi2}
\mathcal{L}^d(E_n) + \mathcal{H}^{d-1}(\partial^*  E_n)   \to 0.
\end{align}
We finally define the sequence $y_n \in GSBV^2_2(\Omega;\R^d)$ by $y_n = z_n \chi_{\Omega \setminus E_n} + (\id + b_n) \chi_{E_n} $ for suitable constants $(b_n)_n \subset \R^d$ which are chosen such that $J_{y_n} \tilde{=} (J_{z_n} \setminus E_n) \cup \partial^* E_n$. Now in view of \eqref{eq: jumpi3} and $J_v \setminus J_{v_n} \tilde{\subset} E_n$, we get $J_{\nabla y_n}\tilde{\subset} J_{y_n}$. By \eqref{eq: jumpi2} and $z_n \to y$ in measure on $\Omega$ we get  $y_n \to y$ in measure on $\Omega$.  By  \eqref{assumptions-W}(iii) we obtain  $W(\nabla {y}_n) = 0$,  $ \nabla^2 {y}_n = 0$ on $E_n$. Then by \eqref{eq: relaxed energy},   \eqref{eq: jumpi4}, \eqref{eq: jumpi2}, and the fact that $J_{\nabla y_n}\tilde{\subset} J_{y_n} \tilde{=} (J_{z_n} \setminus E_n) \cup \partial^* E_n$ we get
$$\limsup_{n\to \infty} \overline{\mathcal{E}}_\eps(y_n,\Omega) \le \limsup_{n\to\infty} \big(\overline{\mathcal{E}}_\eps(z_n,\Omega) + \kappa\mathcal{H}^{d-1}(\partial^* E_n) \big) \le \overline{\mathcal{E}}_\eps(y,\Omega).$$
\RRR Since $\overline{\mathcal{E}}_\eps(y_n,\Omega) = \mathcal{E}_\eps(y_n,\Omega)$ for all $n \in \N$ by $J_{\nabla y_n}\tilde{\subset} J_{y_n}$, \eqref{eq: 5L} implies  $\mathcal{E}'_\eps(y) \le \overline{\mathcal{E}}_\eps(y,\Omega)$. This concludes the proof. \EEE
  \end{proof}

\begin{proof}[Proof of Theorem \ref{th: existence}]
We prove the existence of minimizers via the direct method. Let $(y_n)_n \subset GSBV^2_2(\Omega';\R^d)$ with $y_n = g$ on $\Omega' \setminus\overline{\Omega}$ be a minimizing sequence for the minimization problem \eqref{eq: minimization problem}. By \eqref{assumptions-W} we find  $W(F)\ge c_1|F|^2 - c_2$ for $c_1,c_2>0$. Thus, $\sup_{n\in\N} \overline{\mathcal{E}}_\eps(y_n) <+\infty$ also implies $\sup_{n \in \N} \Vert \nabla y_n \Vert_{L^2(\Omega')}< + \infty$, and we can apply Theorem \ref{th: comp}. We obtain a sequence  $(z_n)_n \subset GSBV^2_2(\Omega';\R^d)$ satisfying $z_n = g$ on $\Omega' \setminus \overline{\Omega}$ and a limiting function $y \in GSBV^2_2(\Omega';\R^d)$ with $y = g$ on $\Omega' \setminus \overline{\Omega}$   \BBB such that $z_n \to y$ in measure on $\Omega'$. \EEE Using  \BBB  \eqref{eq: relaxed energy}, \EEE \eqref{eq: compi1-new}, and $g \in W^{2,\infty}(\Omega';\R^d)$  we calculate
\begin{align*}
\limsup_{n \to \infty} \big(\overline{\mathcal{E}}_\eps(z_n)  -\overline{\mathcal{E}}_\eps(y_n) \big)&  \le \limsup_{n \to \infty} \Big( \BBB \eps^{-2} C_{W,g}  \mathcal{L}^d(S_n) + \eps^{-2\beta} \Vert \nabla^2 g\Vert^2_{L^2(S_n)}  \EEE  \\ & \ \ \ \ \ \ \  \ \ \  \ \  \ \ \ \  +  \kappa \big( \mathcal{H}^{d-1}(J_{z_n} \cup J_{\nabla z_n} )- \mathcal{H}^{d-1}(J_{y_n} \cup J_{\nabla y_n} ) \big)  \Big) \RRR\le \EEE 0,
\end{align*}
where \BBB the constant $C_{W,g}$ depends on $W$ and $\Vert \nabla g \Vert_{L^\infty(\Omega')}$. \EEE I.e., $(z_n)_n$ is also a minimizing sequence. By \BBB  $z_n \to y$ in measure on $\Omega'$ and \EEE the fact that  $\overline{\mathcal{E}}_\eps$ is lower semicontinuous with respect to the convergence in measure on $\Omega'$, see  Proposition \ref{prop: relaxation},  we get
$$ \overline{\mathcal{E}}_\eps(y) \le \liminf_{n \to \infty} \overline{\mathcal{E}}_\eps(z_n) \le \liminf_{n \to \infty} \overline{\mathcal{E}}_\eps(y_n) = \inf_{{\bar{y} \in GSBV^2_2(\Omega';\R^d)}} \Big\{ \overline{\mathcal{E}}_\eps(\bar{y}): \  \bar{y} = g \text{ on } \Omega' \setminus \overline{\Omega} \Big\}.
  $$
This shows that $y$ is a minimizer. 
\end{proof}

\subsection{Compactness}\label{sec: comp-proof} 

This subsection is devoted to the proof of     Theorem \ref{thm: compactess}.

\begin{proof}[Proof of Theorem \ref{thm: compactess}(a)]
Consider a sequence $(y_\eps)_\eps$ with $y_\eps \in \mathcal{S}_{\eps,h}$, i.e.,   $y_\eps = \id + \eps h$ on $\Omega' \setminus \overline{\Omega}$. Suppose that $M:=\sup\nolimits_\eps \overline{\mathcal{E}}_\eps(y_\eps)   <+\infty$. We first construct Caccioppoli partitions (Step 1) and the corresponding rotations (Step 2) in order to define $y^{\rm rot}_\eps$. Then we confirm \eqref{eq: first conditions} (Step 3).

\emph{Step 1: Definition of the Caccioppoli partitions.} First, we apply the $BV$ coarea formula (see \cite[Theorem 3.40 or Theorem 4.34]{Ambrosio-Fusco-Pallara:2000}) on each component $(\nabla y_\eps)_{ij} \in GSBV^2(\Omega')$, $1 \le i,j\le d$,  to write 
\begin{align*}
 \int_{-\infty}^\infty \mathcal{H}^{d-1}\big( (\Omega' \setminus  J_{\nabla y_\eps}) \cap \partial^* \lbrace (\nabla y_\eps)_{ij} > t \rbrace \big) \,dt &= |D  (\nabla y_\eps)_{ij}|(\Omega' \setminus  J_{ \nabla y_\eps}) \le \Vert \nabla^2 y_\eps \Vert_{L^1(\Omega')}.
\end{align*}
Using H\"older's inequality and \eqref{eq: relaxed energy} along with $\overline{\mathcal{E}}_\eps(y_\eps) \le M$, we then get
\begin{align}\label{eq: L7XXX}
 \int_{-\infty}^\infty \mathcal{H}^{d-1}\big( (\Omega' \setminus  J_{\nabla y_\eps}) \cap \partial^* \lbrace (\nabla y_\eps)_{ij} > t \rbrace \big) \,dt   \le (\mathcal{L}^d(\Omega'))^{1/2}  \Vert \nabla^2 y_\eps \Vert_{L^2(\Omega')} \le (\mathcal{L}^d(\Omega') M)^{1/2} \eps^\beta. 
\end{align}
Fix $\gamma \in (\frac{2}{3},\beta)$ and define $T_\eps= \eps^{\gamma}$. For all  $k \in \Z$ we find $t^{ij}_k \in (kT_\eps, (k+1)T_\eps]$    such that 
\begin{align}\label{eq: coarea}
\mathcal{H}^{d-1}\big( (\Omega' \setminus J_{\nabla y_\eps}) \cap   \partial^*\lbrace (\nabla y_\eps)_{ij} >t^{ij}_k \rbrace \big) \le \frac{1}{T_\eps} \int_{kT_\eps}^{{(k+1)T_\eps}} \mathcal{H}^{d-1}\big( (\Omega' \setminus J_{\nabla y_\eps}) \cap  \partial^*\lbrace (\nabla y_\eps)_{ij} >t \rbrace \big)\, dt.
\end{align}
Let $G_k^{\eps,ij} = \lbrace (\nabla y_\eps)_{ij} > t^{ij}_{k} \rbrace \setminus \lbrace (\nabla y_\eps)_{ij} > t^{ij}_{k+1} \rbrace$ and note that each set has finite perimeter in $\Omega'$ since it is the difference of two sets of finite perimeter.  Now \RRR \eqref{eq: L7XXX} and  \eqref{eq: coarea} imply \EEE
\begin{align}\label{eq: nummer geben}
\sum\nolimits_{k\in\Z} \mathcal{H}^{d-1}\big( (\Omega' \setminus J_{\nabla y_\eps}) \cap \partial^* G_k^{\eps, ij}  \big) \le 2T^{-1}_\eps  (\mathcal{L}^d(\Omega') M)^{1/2} \eps^\beta \le C \eps^{\beta-\gamma}
\end{align}
for a sufficiently large constant $C>0$ independent of $\eps$. Since $\mathcal{L}^d(\Omega' \setminus \bigcup_{k \in \Z} G_k^{\eps, ij})=0$, $(G_k^{\eps,ij})_{\BBB k\in \Z \EEE }$ is a Caccioppoli partition of $\Omega'$. 
We let $(P^\eps_j)_{j\in\N}$ be the Caccioppoli partition of $\Omega'$ consisting of the nonempty sets of 
$$\big\{ G_{k_{11}}^{\eps, 11} \cap G_{k_{12}}^{\eps, 12}  \cap \ldots \cap  G_{k_{dd}}^{\eps, dd}: \  k_{ij} \in \Z\text{ for } i,j=1,\ldots,d\big\}.$$
Then \BBB  \eqref{eq: nummer geben}  implies \EEE
\begin{align}\label{eq: coarea3}
\sum\nolimits_{j =1}^\infty\mathcal{H}^{d-1}\big(\partial^*P_j^\eps \cap (\Omega' \setminus J_{\nabla y_\eps})   \big) \le C\eps^{\beta-\gamma}
\end{align}
for a constant $C>0$ independent of $\eps$.

\emph{Step 2: Definition of the rotations.}  We now define corresponding rotations. Recalling $T_\eps = \eps^{\gamma}$ we get $|t_k^{ij} -t_{k+1}^{ij}| \le 2T_\eps=2\eps^\gamma$ for all $k \in \Z$, $i,j=1,\ldots,n$. Then by the definition of $G_k^{\eps,ij}$, for each component $P^\eps_j$ of the Caccioppoli partition, we find a matrix $F_j^\eps \in \R^{d\times d}$ such that
\begin{align}\label{eq: coarea4}
\Vert  \nabla y_\eps - F_j^\eps \Vert_{L^\infty(P^\eps_j)} \le c\eps^{\gamma}, 
\end{align}
where $c$ depends only on $d$. \RRR For each $j \in \N$ with $P^\eps_j \subset \Omega$ up to an $\mathcal{L}^d$-negligible set, \EEE we denote by $\bar{R}_j^\eps$ the nearest point projection of $F_j^\eps$ onto $SO(d)$. For all other components $P^\eps_j$, i.e., the components intersecting $\Omega' \setminus \overline{\Omega}$, we set $\bar{R}_j^\eps = \Id$. We now show that for all $j \in \N$ and  for $\mathcal{L}^d$-a.e.\ $x \in P_j^\eps$  there holds
\begin{align}\label{eq: main control}
|\nabla y_\eps(x) - \bar{R}_j^\eps| \le \max\big\{       C\eps^{\gamma}, \  2\dist(\nabla y_\eps(x), SO(d)) \big\}
\end{align}
for a constant $C>0$ independent of $\eps$.

First, we consider components $P^\eps_j$ which are contained in $\Omega$ up to an $\mathcal{L}^d$-negligible set. Recall that  $\bar{R}_j^\eps$ is defined as the nearest point projection of $F_j^\eps$ onto $SO(d)$. If $|\bar{R}_j^\eps- F_j^\eps| \le 3c\eps^{\gamma}$, where $c$ is the constant of  \eqref{eq: coarea4},    \eqref{eq: main control} follows from \eqref{eq: coarea4} and the triangle inequality. Otherwise,  by \eqref{eq: coarea4} we get for $\mathcal{L}^d$-a.e.\ $x \in P_j^\eps$ 
\begin{align*}
\dist(\nabla y_\eps(x), SO(d)) & \ge \dist(F^\eps_j, SO(d)) - c\eps^{\gamma} = |\bar{R}_j^\eps- F_j^\eps|  -  c\eps^{\gamma} \\ &
\ge \tfrac{1}{2} \big(|\bar{R}_j^\eps- F_j^\eps|  + c\eps^{\gamma} \big) \ge \tfrac{1}{2}|\bar{R}_j^\eps- \nabla y_\eps(x)|. 
\end{align*} 
This implies \eqref{eq: main control}. Now consider a component $P^\eps_j$ which intersects $\Omega' \setminus \overline{\Omega}$. Then  by \eqref{eq: coarea4} and the fact that $y_\eps = \id + \eps h$ on $\Omega' \setminus \overline{\Omega}$ there holds 
\begin{align*}
 \Vert  \Id + \eps \nabla h - F_j^\eps \Vert_{L^\infty(P^\eps_j  \setminus \Omega)} \le \Vert  \nabla y_\eps - F_j^\eps \Vert_{L^\infty(P^\eps_j)} \le c\eps^{\gamma}.
\end{align*}
Since $0 < \gamma < 1$, this yields $|F_j^\eps - \Id| \le C\eps^{\gamma}$ for a constant $C$ depending also on $\Vert \nabla h \Vert_{L^\infty(\Omega')}$. This along with \eqref{eq: coarea4} implies \eqref{eq: main control} (for $\bar{R}_j^\eps = \Id$).  We define the rotations in the statement by $R_j^\eps := (\bar{R}^\eps_j)^{-1}$.

\emph{Step 3: Proof of \eqref{eq: first conditions}.} 
We are now in a position to prove \BBB \eqref{eq: first conditions}. \EEE We define $y^{\rm rot}_{\eps}  $ as in \eqref{eq: piecewise rotated}, i.e., $y^{\rm rot}_\eps =      \sum\nolimits_{j=1}^\infty R_j^\eps y_\eps \chi_{P^\eps_j}$. Then \eqref{eq: first conditions}(i) follows from the fact that $y_\eps = \id + \eps h$ on $\Omega' \setminus \overline{\Omega}$ and $y_{\eps}^{\rm rot} = y_\eps$  on $\Omega' \setminus \overline{\Omega}$, where the latter \BBB holds due to $R^\eps_j = \Id$ for all $P_j^\eps$ intersecting $\Omega' \setminus \overline{\Omega}$. \EEE Property \eqref{eq: first conditions}(ii) is a direct consequence of the definition of $y_\eps^{\rm rot}$ and  \eqref{eq: coarea3}. To see \eqref{eq: first conditions}(iv), we use \eqref{eq: main control} and  $R_j^\eps = (\bar{R}^\eps_j)^{-1}$ to get
\begin{align*}
\Vert \nabla y^{\rm rot}_\eps - \Id\Vert^2_{L^2(\Omega')} &= \sum\nolimits_{j=1}^\infty \Vert \nabla y_\eps - \bar{R}^\eps_j\Vert^2_{L^2(P^\eps_j)} \le C\eps^{2\gamma} \mathcal{L}^d(\Omega') +  4 \Vert \dist(\nabla y_\eps, SO(d)) \Vert^2_{L^2(\Omega')} \\&
 \le C(\eps^{2\gamma} + \eps^2) 
\end{align*}
for a constant depending on $M$, where the last step follows from \eqref{assumptions-W}(iii), \eqref{eq: relaxed energy}, and $\overline{\mathcal{E}}_\eps(y_\eps) \le M$.
Since $0 < \gamma < 1$, \eqref{eq: first conditions}(iv) \BBB is proved. \EEE  It remains to show \eqref{eq: first conditions}(iii). We recall the linearization formula  (see \cite[(3.20)]{FrieseckeJamesMueller:02}) 
\begin{align}\label{rig-eq: linearization}
|{\rm sym}(F -\Id)| =  \dist(F,SO(d)) + {\rm O} (|F- \Id|^2)
\end{align}
for $F \in \R^{d \times d}$. By Young's inequality and $|{\rm sym}(F -\Id)|  \le |F -\Id| $ this implies 
\begin{align*}
|{\rm sym}(F -\Id)|^2& \le \min\big\{  |F- \Id|^2, \  C \dist^2(F,SO(d))+ C|F-\Id|^4 \big\}\\ 
& \le   C \dist^2(F,SO(d)) + C\min\big\{ |F- \Id|^2, \ |F-\Id|^4 \big\}.
  \end{align*}
Then we calculate
\begin{align*}
\int_{\Omega'} |{\rm sym}( \nabla y^{\rm rot}_\eps - \Id)|^2 &\le C\int_{\Omega'}  \Big( \dist^2(\nabla y_\eps^{\rm rot},SO(d)) + \min\big\{ |\nabla y_\eps^{\rm rot}- \Id|^2, \ |\nabla y_\eps^{\rm rot}-\Id|^4 \big\}\Big) \\
&\le C\sum\nolimits_{j=1}^\infty\int_{P^\eps_j} \Big(\dist^2(\nabla y_\eps,SO(d)) +  |\nabla y_\eps- \bar{R}^\eps_j|^2 \, \min\big\{ 1, \, |\nabla y_\eps-\bar{R}_j^\eps|^2 \big\}\Big). 
\end{align*}
 By \eqref{eq: main control} we note that for a.e.\ $x \in P_j^\eps$ there holds
$$|\nabla y_\eps(x)- \bar{R}^\eps_j|^2 \,\min\big\{   1 , \, |\nabla y_\eps(x)-\bar{R}_j^\eps|^2 \big\} \le C\eps^{4\gamma} + C\dist^2(\nabla y_\eps(x), SO(d)). $$ 
Here, we used that, if $|\nabla y_\eps(x)-\bar{R}_j^\eps|^2>1$, the maximum in \eqref{eq: main control} is attained for $\dist(\nabla y_\eps(x), SO(d))$, provided that $\eps$ is small enough. Therefore, we get
 \begin{align*}
\int_{\Omega'} |{\rm sym}( \nabla y^{\rm rot}_\eps - \Id)|^2 &\le  C\int_{ \BBB \Omega' \EEE } \dist^2(\nabla y_\eps,SO(d)) + C\mathcal{L}^d(\Omega') \eps^{4\gamma}  \le C\eps^2 + C\eps^{4\gamma},
\end{align*}
where in the last step we have again \BBB used \eqref{assumptions-W}(iii), \eqref{eq: relaxed energy}, \EEE and $\overline{\mathcal{E}}_\eps(y_\eps) \le M$. Since $\gamma >\frac{2}{3} \ge \frac{1}{2}$, we obtain \eqref{eq: first conditions}(iii). This concludes the proof of   Theorem \ref{thm: compactess}(a). 
\end{proof}

 \begin{rem}\label{re: unchanged}
 For later purposes, we point out that the construction shows $y_\eps^{\rm rot} = y_{\eps}$ on all $P_j^\eps$ intersecting $\Omega'\setminus \overline{\Omega}$. 
 \end{rem}
 
\begin{proof}[Proof of Theorem \ref{thm: compactess}(b)]
We define the rescaled displacment fields $u_\eps := \frac{1}{\eps}(y^{\rm rot}_\eps - \id)$ as in \eqref{eq: rescalidipl}.   Clearly, there holds  $u_\eps \in GSBV^2(\Omega';\R^d) \subset GSBD^2(\Omega')$. Note that by \eqref{eq: first conditions}(iii) we obtain $\sup_\eps \Vert e(u_\eps) \Vert_{L^2(\Omega')}  < + \infty$, where for shorthand we again write $e(u_\eps) = \frac{1}{2}(\nabla u_\eps^T + \nabla u_\eps)$. Moreover, in view of  \eqref{eq: first conditions}(ii) and $\beta >\gamma$, we get  
\begin{align}\label{eq: should be sep}
\limsup\nolimits_{\eps \to 0}  \mathcal{H}^{d-1}(J_{u_\eps}) \le \limsup\nolimits_{\eps \to 0}  \mathcal{H}^{d-1}(J_{y_\eps} \cup J_{\nabla y_\eps}) < +\infty.
\end{align}
Therefore, we can apply Theorem \ref{th: crismale-comp} on the sequence $(u_\eps)_\eps$ to obtain $A$ and $u' \in GSBD^2(\Omega')$  such that \eqref{eq: at crsimale comp} holds (up to passing to a subsequence). We first observe that $E_u = A$, where \BBB $E_u := \lbrace x\in \Omega: \, |u_\eps(x)| \to \infty \rbrace$ and $A := \lbrace x\in \Omega': \, |u_\eps(x)| \to \infty \rbrace$.  \EEE To see this, we have to check that  $A \subset \Omega$. This follows from the fact that $u_\eps = h$ on $\Omega' \setminus \overline{\Omega}$ for all $\eps$, see \eqref{eq: first conditions}(i) and \eqref{eq: rescalidipl}.

We define $u := u' \chi_{\Omega' \setminus E_u} + \lambda \chi_{E_u}$ for some $\lambda \in \R^d$ such that $\partial^* E_u \cap \Omega' \subset J_u$ up to an $\mathcal{H}^{d-1}$-negligible set. \RRR Since $J_u \subset J_{u'} \cup (\partial^*E_u\cap \Omega')$, \EEE \eqref{eq: at crsimale comp} then implies  \eqref{eq: the main convergence}, where the last inequality in \eqref{eq: the main convergence}(iii) follows from \eqref{eq: should be sep}. Finally, $u \in GSBD^2_h$  follows from    $u_\eps = h$ on $\Omega' \setminus \overline{\Omega}$  and \eqref{eq: the main convergence}(i).
\end{proof}

\subsection{Passage to linearized model by $\Gamma$-convergence}\label{sec: gamma}

We now give the proof of Theorem \ref{rig-th: gammaconv}.

\begin{proof}[Proof of Theorem \ref{rig-th: gammaconv}] Since $\overline{\mathcal{E}}_\eps \le \mathcal{E}_\eps$, see \eqref{rig-eq: Griffith en} and \eqref{eq: relaxed energy}, the compactness result follows immediately from Theorem \ref{thm: compactess}. It suffices to show the $\Gamma$-liminf inequality for $\overline{\mathcal{E}}_\eps$ and the $\Gamma$-limsup inequality for $\mathcal{E}_\eps$. 

\emph{Step 1: $\Gamma$-liminf inequality.} Consider $u \in GSBD^2_h$ and $(y_\eps)_\eps$, $y_\eps \in \mathcal{S}_{\eps,h}$, such that  $y_\eps \rightsquigarrow u$, i.e, by Definition \ref{def:conv} there exist $y^{\rm rot}_\eps =      \sum\nolimits_{j=1}^\infty R^\eps_j \,  y_\eps \,  \chi_{P^\eps_j}$  and $  u_\eps:  = \frac{1}{\eps} (y^{\rm rot}_\eps - \id) $  such that \eqref{eq: first conditions} and \eqref{eq: the main convergence} hold for some fixed $\gamma \in (\frac{2}{3},\beta)$. The essential step is to prove
\begin{align}\label{eq: essential step}
\liminf_{\eps \to 0} \frac{1}{\eps^2}\int_{\Omega'} W(\nabla y_\eps) \ge \int_{\Omega'} \frac{1}{2} Q(e( u) ).
\end{align}
Once \eqref{eq: essential step} is shown, we conclude by \eqref{eq: relaxed energy} and  \eqref{eq: the main convergence}(iii) that
\begin{align*}
\liminf_{\eps \to 0} \overline{\mathcal{E}}_\eps (y_\eps) \ge  \liminf_{\eps \to 0} \Big(   \frac{1}{\eps^2}\int_{\Omega'} W(\nabla y_\eps) + \kappa\mathcal{H}^{d-1}(J_{y_\eps} \cup J_{\nabla y_\eps}) \Big) \ge  \int_{\Omega'} \frac{1}{2} Q(e(u))  + \kappa\mathcal{H}^{d-1}(J_u).
\end{align*}
In view of \eqref{rig-eq: Griffith en-lim}, this shows $\liminf_{\eps \to 0} \overline{\mathcal{E}}_\eps (y_\eps)  \ge \mathcal{E}(u)$. To see \eqref{eq: essential step}, we first note that the frame indifference of $W$ (see \eqref{assumptions-W}(ii)) and the definitions of $y^{\rm rot}_\eps$ and $u_\eps$ (see  \eqref{eq: piecewise rotated} and \eqref{eq: rescalidipl}) imply 
\begin{align}\label{eq: respresentation}
W(\nabla y_\eps) = W(\nabla y_\eps^{\rm rot}) = W(\Id + \eps \nabla u_\eps). 
\end{align}
In view of $\gamma >2/3$, we can choose  $\eta_\eps \to +\infty$ such that
\begin{align}\label{eq: eta chocie}
\eps^{1-\gamma} \eta_\eps \to  +\infty \ \ \ \text{and} \ \ \  \eps \eta^3_\eps \to 0.
\end{align}
 We define $\chi_\eps \in L^\infty(\Omega')$ by $\chi_\eps(x) = \chi_{[0,\eta_\eps)}(|\nabla u_\eps(x)|)$. \BBB Note that \EEE $\mathcal{L}^d(\lbrace |\nabla u_\eps(x)|> \eta_\eps \rbrace ) \le C(\eps^{\gamma-1}/\eta_\eps)^2$ by \eqref{eq: first conditions}(iv) and the fact that $  u_\eps  = \frac{1}{\eps} (y^{\rm rot}_\eps - \id)$. \BBB Thus, \EEE \eqref{eq: eta chocie} implies       $\chi_\eps \to 1$ boundedly in measure on $\Omega'$. The regularity of $W$ implies  $W(\Id + F) = \frac{1}{2}Q( {\rm sym}(F)) + \omega(F)$, where   $Q$ is defined in \eqref{rig-eq: Griffith en-lim}   and $\omega:\R^{d \times d}\to \BBB \R \EEE $ is a function  satisfying $\BBB |\omega(F)| \EEE \le C|F|^3$  for all $F \in \R^{d\times d}$ with $|F| \le 1$.  Then by \eqref{eq: respresentation} and $W \ge 0$ we get
\begin{align*}
\liminf_{\eps \to 0} \frac{1}{\eps^2}\int_{\Omega'} W(\nabla y_\eps) & \ge \liminf_{\eps\to 0}  \frac{1}{\eps^2}\int_{\Omega'} \chi_\eps W(\Id + \eps \nabla u_\eps)
\\
&=  \liminf_{\eps\to 0} \int_{\Omega'} \chi_\eps \Big( \frac{1}{2}Q(e(u_\eps)) + \frac{1}{\eps^2} \omega(\eps \nabla u_\eps)  \Big) 
\\
&\ge  \liminf_{\eps\to 0} \Big(\int_{\Omega'\setminus E_u} \chi_\eps   \frac{1}{2}Q(e(u_\eps)) + \int_{\Omega'}   \chi_\eps |\nabla u_\eps|^3 \eps \frac{\omega(\eps \nabla u_\eps)}{|\eps \nabla u_\eps|^3}  \Big),
\end{align*}
where $E_u = \lbrace x\in \Omega: \, |u_\eps(x)| \to \infty \rbrace$. 
The second term converges to zero. Indeed, $\chi_\eps\frac{|\omega(\eps \nabla u_\eps)|}{|\eps \nabla u_\eps|^3}$ is uniformly controlled by $C$ and  $\chi_\eps |\nabla u_\eps|^3 \eps$ is uniformly controlled by $\eta_\eps^3 \eps $, where  $\eta_\eps^3 \eps \to 0$ by \eqref{eq: eta chocie}.   As $e( u_\eps) \rightharpoonup e(u)$ weakly in $L^2(\Omega'\setminus E_u,\R^{d\times d}_{\rm sym})$ by \eqref{eq: the main convergence}(ii), $Q$ is convex, and $\chi_\eps$ converges to $1$ boundedly in measure on $\Omega' \setminus E_u$, we conclude 
\begin{align*}
\liminf_{\eps \to 0} \frac{1}{\eps^2}\int_{\Omega'} W(\nabla y_\eps) \ge  \int_{\Omega' \setminus E_u} \frac{1}{2}Q(e(u)) =  \int_{\Omega'} \frac{1}{2}Q(e(u)),
\end{align*}
where the last step follows from the fact that $e(u) = 0$ on $E_u$, see \eqref{eq: the main convergence}(iv). This shows \eqref{eq: essential step} and concludes the proof of the $\Gamma$-liminf inequality. 

\emph{Step 2: $\Gamma$-limsup inequality.}  Consider $u \in GSBD^2_h$ \BBB with $h \in W^{2,\infty}(\Omega';\R^d)$. \EEE Let $\gamma \in (\frac{2}{3},\beta)$.    By Theorem \ref{th: crismale-density2} we  can find a sequence $(v_\eps)_\eps \in GSBV^2_2(\Omega';\R^d)$ with $v_\eps = h$ on $\Omega' \setminus \overline{\Omega}$, \BBB $v_\eps \in W^{2,\infty}(\Omega' \setminus J_{v_\eps};\R^d)$, \EEE and  
\begin{align}\label{eq: dense-in-appli}
{\rm (i)} & \ \ v_\eps \to  u  \text{ in measure on } \Omega',\notag\\
{\rm (ii)} & \ \ \Vert e(v_\eps) - e(u) \Vert_{L^2(\Omega')} \to 0,\notag\\
{\rm (iii)} &  \ \  \mathcal{H}^{d-1}(J_{v_\eps})  \to  \mathcal{H}^{d-1}(J_{u}),\notag\\
{\rm (iv)} & \ \   \Vert \nabla v_\eps \Vert_{L^\infty(\Omega')} +  \Vert \nabla^2 v_\eps \Vert_{L^\infty(\Omega')} \le \eps^{(\beta-1)/2} \le \eps^{\gamma-1}.
\end{align}
Note that property (iv) can be achieved since the approximations satisfy $v_\eps \in W^{2,\infty}(\Omega' \setminus J_{v_\eps};\R^d)$. (Recall $\gamma < \beta < 1$.) Moreover,  $v_\eps \in W^{2,\infty}(\Omega' \setminus J_{v_\eps};\R^d)$ also implies $J_{\nabla v_\eps} \subset J_{v_\eps}$.

We define the sequence $y_\eps = \id + \eps v_\eps$. As $v_\eps \in GSBV_2^2(\Omega';\R^d)$ and  $v_\eps = h$ on $\Omega' \setminus \overline{\Omega}$, we get $y_\eps \in \mathcal{S}_{\eps,h}$, see \eqref{eq: boundary-spaces}. We now check that $y_\eps \rightsquigarrow u$ in the sense of Definition \ref{def:conv}.

We define $y_\eps^{\rm rot} = y_\eps$, i.e., the Caccioppoli partition in \eqref{eq: piecewise rotated} consists of the set $\Omega'$ only with corresponding rotation $\Id$. Then \eqref{eq: first conditions}(i),(ii) are trivially satisfied. As  $\nabla y_\eps^{\rm rot} - \Id = \eps \nabla v_\eps$, \eqref{eq: first conditions}(iii),(iv) follow from \eqref{eq: dense-in-appli}(ii),(iv). The rescaled displacement fields $u_\eps$ defined in \eqref{eq: rescalidipl} satisfy $u_\eps = v_\eps$. Then \eqref{eq: the main convergence}  for $E_u = \emptyset$ follows from \eqref{eq: dense-in-appli}(i)--(iii) and $J_{y_\eps} = J_{v_\eps}$.

Finally, we confirm $\lim_{\eps \to 0} \mathcal{E}_{\eps}(y_\eps) = \mathcal{E}(u)$. In view of $J_{y_\eps} = J_{v_\eps}$, $J_{\nabla y_\eps} \subset J_{y_\eps}$,  \eqref{eq: dense-in-appli}(iii), and the definition of the energies in \eqref{rig-eq: Griffith en},  \eqref{rig-eq: Griffith en-lim}, it suffices to show 
$$\lim_{\eps \to 0} \Big( \frac{1}{\eps^2}\int_{\Omega'} W(\nabla y_\eps)  + \frac{1}{\eps^{2\beta}} \int_{\Omega'} |\nabla^2 y_\eps|^2 \Big) = \int_{\Omega'} \frac{1}{2} Q(e(u)).$$
The second term vanishes by \eqref{eq: dense-in-appli}(iv), $\beta <1$, and the fact that $\nabla^2 y_\eps = \eps \nabla^2 v_\eps$. For the first term, we again use that $W(\Id + F) =  \frac{1}{2}Q( {\rm sym}(F)) + \omega(F)$ with $|\omega(F)|\le C|F|^3$ for $|F| \le 1$, and   compute by \eqref{eq: dense-in-appli}(ii),(iv)
\begin{align*} 
\lim_{\eps \to 0}\frac{1}{\eps^2} \int_{\Omega'} W(\nabla y_\eps) & = \lim_{\eps \to 0} \frac{1}{\eps^2} \int_{\Omega'} W(\Id +  \eps  \nabla v_\eps)  = \lim_{\eps \to 0}  \int_{\Omega'}  \Big( \frac{1}{2} Q(e(v_\eps)) + \frac{1}{\eps^2} \omega(\eps \nabla v_\eps)  \Big)  \\ & = \int_{\Omega'}   \frac{1}{2} Q(e(u)) +   \lim_{\eps \to 0}\int_{\Omega'}    {\rm O}\big( \eps|\nabla v_\eps|^3 \big)= \int_{\Omega'}   \frac{1}{2} Q(e(u)),
\end{align*}
where in the last step we have used that $\Vert \nabla v_\eps \Vert_{L^\infty(\Omega')} \le C\eps^{\gamma - 1}$ for some $\gamma >2/3$. This concludes the proof.  
\end{proof}

\begin{rem}\label{rem: Hoelder space}
The proof shows that one can readily incorporate a dependence on the material point $x$ in the density $W,$ as long as \eqref{assumptions-W} still holds. We also point out that it suffices to suppose that $W$ is $C^{2,\alpha}$ in a neighborhood of $SO(d)$, provided that $1 >\beta > \gamma > \frac{2}{2+\alpha}$. In fact, in that case, one has $|\omega(F)| \le C|F|^{2+\alpha}$ \RRR for all $|F|\le 1$,\EEE and all estimates remain true, where in \eqref{eq: eta chocie} one chooses $\eta_\eps$ with $\eps^{1-\gamma} \eta_\eps \to  +\infty$ and $\eps^\alpha \eta^{2+\alpha}_\eps \to 0$.
\end{rem} 

We close this subsection with the proof of Corollary \ref{cor: main cor}.

\begin{proof}[Proof of Corollary \ref{cor: main cor}]
The statement follows in the spirit of  the fundamental theorem of $\Gamma$-convergence, see, e.g., \cite[Theorem 1.21]{Braides:02}. We repeat the argument here for the reader's convenience. We observe that $\inf_{\bar{y} \in \mathcal{S}_{\eps,h}} \mathcal{E}_\eps(\bar{y})$ is uniformly bounded by choosing $\id + \eps h$ as competitor.  Given $(y_\eps)_\eps$, $y_\eps \in \mathcal{S}_{\eps,h}$, satisfying \eqref{eq: eps control}, we apply Theorem \ref{rig-th: gammaconv}(a) to find  a subsequence (not relabeled), and $u \in GSBD^2_h$ such that   $y_\eps \rightsquigarrow u$ in the sense of Definition \ref{def:conv}. Thus, by  Theorem   \ref{rig-th: gammaconv}(b) we obtain  \begin{align}\label{eq: last1}
 \mathcal{E}(u) \le  \liminf_{\eps \to 0} \mathcal{E}_\eps(y_\eps) \le  \liminf_{\eps \to 0} \inf_{\bar{y} \in \mathcal{S}_{\eps,h}} \mathcal{E}_\eps(\bar{y}).
 \end{align}
By  Theorem   \ref{rig-th: gammaconv}(c), for each $v \in GSBD^2_h$, there exists a   sequence $(w_\eps)_\eps$ with $w_\eps \rightsquigarrow v$ and  $\lim_{\eps\to 0} \mathcal{E}_\eps(w_\eps) = \mathcal{E}(v)$.  This  implies
  \begin{align}\label{eq: last2}
\limsup_{\eps \to 0}  \inf_{\bar{y} \in \mathcal{S}_{\eps,h}} \mathcal{E}_\eps(\bar{y}) \le \lim_{\eps  \to 0}  \mathcal{E}_\eps(w_\eps) =  \mathcal{E}(v).
  \end{align}
By combining \eqref{eq: last1}-\eqref{eq: last2} we find 
  \begin{align}\label{eq: last3}
 \mathcal{E}(u) \le \liminf_{\eps \to 0} \inf_{\bar{y} \in \mathcal{S}_{\eps,h}} \mathcal{E}_\eps(\bar{y}) \le  \limsup_{\eps \to 0} \inf_{\bar{y} \in \mathcal{S}_{\eps,h}} \mathcal{E}_\eps(\bar{y}) \le \mathcal{E}(v).
  \end{align}
  Since $v \in GSBD^2_h$ was arbitrary, we get that $u$ is a minimizer of $\mathcal{E}$. \BBB Property \eqref{eq: eps control2} follows from \EEE \eqref{eq: last3} with $v=u$. In particular, the limit in \eqref{eq: eps control2} does not depend on the specific choice of the subsequence and thus \eqref{eq: eps control2} holds for the whole sequence.
\end{proof}

\subsection{Characterization of limiting displacements}\label{sec: admissible}
  
This final subsection is devoted to the proof of  Lemma \ref{lemma: characteri}.

\begin{proof}[Proof of Lemma \ref{lemma: characteri}]
\emph{Proof of (a).} As a preparation, we observe that for two given rotations $R_1,R_2 \in SO(d)$ there holds
 \begin{align}\label{rig-eq: linearization-appli}
|{\rm sym}(R_2 R_1^T -\Id)|  \le C |R_1- R_2|^2.
\end{align}
This follows from formula \eqref{rig-eq: linearization} applied for $F = R_2 R_1^T$.  

Consider a sequence $(y_\eps)_\eps$. Let 
\begin{align}\label{eq: piecewise rotated2}
y^{{\rm rot},i}_\eps = \sum\nolimits_{j=1}^\infty {R_j^{\eps,i}} \, y_\eps  \, \chi_{P_j^{\eps,i}}, \ \ \ i=1,2,
\end{align}
be two sequences such that the corresponding rescaled displacement fields $u_\eps^i = \eps^{-1}(y_\eps^{{\rm rot},i} - \id)$, $i=1,2$, converge to $u_1$ and $u_2$, respectively, in the sense of \eqref{eq: the main convergence}, where the exceptional sets are denoted by $E_{u_1}$ and $E_{u_2}$, respectively. In particular, pointwise $\mathcal{L}^d$-a.e.\ in $\Omega'$ there holds
\begin{align}\label{eq:symmi}
e(u_\eps^1) -  e(u_\eps^2) &= \eps^{-1}   {\rm sym} \Big(     \sum\nolimits_j {R_j^{\eps,1}} \, \nabla y_\eps \, \chi_{P_j^{\eps,1}}   -  \sum\nolimits_j {R_j^{\eps,2}}  \, \nabla y_\eps   \, \chi_{P_j^{\eps,2}} \Big) \notag\\
&   = \eps^{-1}   {\rm sym} \Big(     \sum\nolimits_{j,k} \big( {R_j^{\eps,1}}  - {R_k^{\eps,2}} \big) \, \chi_{P_j^{\eps,1} \cap P_k^{\eps,2}} \, \nabla y_\eps   \Big)  \notag\\
&  = \eps^{-1}   {\rm sym} \Big(     \sum\nolimits_{j,k} \big( \Id  - {R_k^{\eps,2} (R_j^{\eps,1})^T } \big) \, \chi_{P_j^{\eps,1} \cap P_k^{\eps,2}} \, \nabla y^{{\rm rot},1}_\eps   \Big).
\end{align}  
 For brevity, we define $Z_\eps \in L^\infty(\Omega';\R^{d \times d})$ by  
\begin{align}\label{eq: Zdef}
Z_\eps :=  \sum\nolimits_{j,k} \big( \Id  - {R_k^{\eps,2} (R_j^{\eps,1})^T } \big) \, \chi_{P_j^{\eps,1} \cap P_k^{\eps,2}}. \end{align}
By \eqref{eq: first conditions}(iv) and the triangle inequality we get   
\begin{align*}
\sum_{j,k}  \big\|   {R_j^{\eps,1}}  - {R_k^{\eps,2} }\big\|^2_{L^2(P_j^{\eps,1} \cap P_k^{\eps,2})} &\le C\sum_{j=1}^\infty  \Vert (\nabla y_\eps)^T - R_j^{\eps,1}  \Vert^2_{L^2(P_j^{\eps,1})} + C\sum_{k=1}^\infty  \Vert (\nabla y_\eps)^T - R_k^{\eps,2}  \Vert^2_{L^2(P_k^{\eps,2})} \\
& = C\Vert \nabla y_\eps^{{\rm rot},1}- \Id \Vert_{L^2(\Omega')}^2 + C\Vert \nabla y_\eps^{{\rm rot},2}- \Id \Vert_{L^2(\Omega')}^2 \le C\eps^{2\gamma} 
\end{align*}
for some given $\gamma \in (\frac{2}{3},\beta)$, and $C>0$ independent of $\eps$.  Equivalently, this means  
\begin{align*}
 \sum\nolimits_{j,k} \mathcal{L}^d\big( P_j^{\eps,1} \cap P_k^{\eps,2} \big)  \big| R_j^{\eps,1}  - {R_k^{\eps,2}} \big|^2 \le C\eps^{2\gamma}.   
 \end{align*}
By recalling  \eqref{rig-eq: linearization-appli} and  \eqref{eq: Zdef}  we then get
\begin{align*}
\Vert {\rm sym} (Z_\eps) \Vert_{L^1(\Omega')} \le C\eps^{2\gamma}, \ \ \ \ \  \ \ \ \ \ \  \Vert Z_\eps \Vert_{L^2(\Omega')} \le C\eps^{\gamma}.
\end{align*}
This along with H\"older's inequality,  \eqref{eq: first conditions}(iv) for $y_\eps^{{\rm rot},1}$, and \eqref{eq:symmi}  yields
\begin{align}\label{eq: long eq}
\Vert e(u_\eps^1) -  e(u_\eps^2) \Vert_{L^1(\Omega')} & = \frac{1}{\eps}  \Vert  {\rm sym} \big( Z_\eps\, \nabla y_\eps^{{\rm rot},1} \big)\Vert_{L^1(\Omega')} \notag\\&
\le \frac{1}{\eps}    \Vert  {\rm sym} \big( Z_\eps\, \big(\nabla y_\eps^{{\rm rot},1} - \Id \big) \big)\Vert_{L^1(\Omega')} + \frac{1}{\eps}   \Vert  {\rm sym} \big( Z_\eps \big)\Vert_{L^1(\Omega')}\notag\\ &
\le \frac{1}{\eps}     \Vert Z_\eps \Vert_{L^2(\Omega')} \Vert  \nabla y_\eps^{{\rm rot},1} - \Id \Vert_{L^2(\Omega')} + \frac{1}{\eps}     \Vert  {\rm sym} \big( Z_\eps \big)\Vert_{L^1(\Omega')}  \le C\eps^{2\gamma - 1}.
\end{align}
We have that $e(u_\eps^1) -  e(u_\eps^2)$ converges to $e(u_1) - e(u_2)$ weakly  in $L^2(\Omega' \setminus (E_{u_1} \cup E_{u_2});\R^{d\times d}_{\rm sym})$, see \eqref{eq: the main convergence}(ii). Then \eqref{eq: long eq} and the fact that  $\gamma > \frac{2}{3} > \frac{1}{2}$ imply that $e(u_1) - e(u_2) = 0$ on $\Omega' \setminus (E_{u_1} \cup E_{u_2})$. This shows part (a) of the statement.  

\emph{Proof of (b).} Let $(y_\eps)_\eps$ be a sequence satisfying \eqref{eq: minimizer}.  Consider two piecewise rotated functions $y^{{\rm rot},i}_\eps$ as given in \eqref{eq: piecewise rotated2} and let $u_1,u_2$ be the limits identified in \eqref{eq: the main convergence}, where the corresponding exceptional sets are denoted by $E_{u_1},E_{u_2}$. We let $\mathcal{J}^i = \lbrace j \in \N: \,   P_j^{\eps,i} \subset \Omega \text{ up to an $\mathcal{L}^d$-negligible set} \rbrace$ for $i=1,2$, and set $D_\eps :=\bigcup_{i=1,2}\bigcup_{j \in \mathcal{J}^i} P_j^{\eps,i}$. By   \eqref{eq: first conditions}(ii) and $\gamma < \beta$ we \BBB obtain \EEE 
\begin{align}\label{eq: newD}
\limsup\nolimits_{\eps \to 0} \mathcal{H}^{d-1}\big(  \big(\partial^*D_\eps \cap \Omega' \big) \setminus \big(J_{y_\eps}\cup J_{\nabla y_\eps} \big)      \big) = 0.
\end{align}
As also $\sup_\eps\mathcal{H}^{d-1}(J_{y_\eps} \cup J_{\nabla y_{\eps}}) < + \infty$, we get   that $\mathcal{H}^{d-1}(\partial^* D_\eps)$ is uniformly controlled. Therefore, we may suppose that $D_\eps \to D$ in measure for a set of finite perimter $D \subset \Omega$, see \cite[Theorem 3.39]{Ambrosio-Fusco-Pallara:2000}. We observe that  $y^{{\rm rot},i}_\eps = y_\eps$ on $\Omega' \setminus D_\eps$ for $i=1,2$ by  Remark \ref{re: unchanged}. Therefore, \eqref{eq: rescalidipl}   implies that   $E_{u_1} \setminus D = E_{u_2} \setminus D$. In the following, we denote this set by $\hat{E}$. Then, \eqref{eq: rescalidipl} and \eqref{eq: the main convergence}(i)  also yield 
\begin{align}\label{eq: lateli}
u_1 = u_2 \ \ \ \text{ a.e.\ on } \Omega' \setminus (D \cup \hat{E}).
\end{align}
To compare $u_1$ and $u_2$ inside $D \cup \hat{E}$, we introduce modifications: for  $i=1,2$ and sequences $(\lambda_\eps)_\eps \subset \R^d$, let
\begin{align}\label{eq: defffi1}
y^{\lambda_\eps,i}_\eps := y^{{\rm rot},i}_\eps  + \lambda_\eps  \, \chi_{D_\eps}.
\end{align}
By definition, $D_\eps$ does not intersect $\Omega' \setminus \overline{\Omega}$ and has finite perimeter by \eqref{eq: newD}. Thus, we get $y^{\lambda_\eps,i}_\eps \in \mathcal{S}_{\eps,h}$, see \eqref{eq: boundary-spaces} \BBB and \eqref{eq: first conditions}(i). \EEE By \BBB \eqref{eq: first conditions}(ii), \EEE \eqref{eq: newD}, and the fact that the elastic energy is frame indifferent we also observe that $(y^{\lambda_\eps,i}_\eps)_\eps$ is  a  minimizing sequence for $i=1,2$ and all $(\lambda_\eps)_\eps \subset \R^d$. We obtain
\begin{align}\label{eq: the same}
y_\eps = \BBB y^{{\rm rot},i}_\eps = \EEE    y^{\lambda_\eps,i}_\eps \  \ \ \ \text{ on $\Omega' \setminus D_\eps $ for all $(\lambda_\eps)_\eps \subset \R^d$, $i=1,2$}.
\end{align}
This follows from \eqref{eq: defffi1} and $y^{{\rm rot},i}_\eps = y_\eps$ on $\Omega' \setminus D_\eps$ for $i=1,2$, see  Remark \ref{re: unchanged}. We now consider two different cases:

(1) Fix $i=1,2$, $\lambda \in \R^d$, and consider $\lambda_\eps = \lambda \eps$.  In view of  \eqref{eq: rescalidipl}, \eqref{eq: the main convergence}(i), and \eqref{eq: defffi1}, we get that $\eps^{-1}(y_\eps^{\lambda_\eps,i} - \id) \to  u_i + \lambda \chi_D $ in measure on $\Omega' \setminus E_{u_i}$. Thus, one can check that $y^{\lambda_\eps,i}_\eps\rightsquigarrow u^{\lambda}_i$   for some $u^\lambda_i \in GSBD^2_h$    satisfying
\begin{align}\label{eq: representation}
u^\lambda_i = u_i + \lambda \chi_D  \text{ on } \Omega'\setminus E_{u_i}.
\end{align}

(2) Recall that $\hat{E}= E_{u_1} \setminus D = E_{u_2} \setminus D = \lbrace x \in \Omega \setminus D: \,  |\eps^{-1}(y_\eps^{{\rm rot},i} - \id)| \to \infty\rbrace$ for $i=1,2$.  In view of \eqref{eq: defffi1}, we can choose a suitable sequence $(\lambda_\eps)_\eps$ such that $|\eps^{-1}(y_\eps^{\lambda_\eps,i} - \id)| \to \infty$ on $\hat{E} \cup D$ for $i=1,2$.  This along with \eqref{eq: the same} and \eqref{eq: the main convergence}(i),(iv) implies that for $i=1,2$ we have ${y}^{\lambda_\eps,i}_\eps \rightsquigarrow \hat{u}$ for some $\hat{u} \in GSBD_h^2$ satisfying  
\begin{align}\label{eq: hatti}
{\rm (i)} & \ \ \hat{u} = u_1 = u_2 \ \ \ \text{ a.e.\ on }  \Omega'\setminus (\hat{E} \cup D), \notag \\
{\rm (ii)} & \ \    e(\hat{u}) = 0 \ \ \text{ a.e.\ on } \ \hat{E} \cup D, \ \ \ \mathcal{H}^{d-1}(J_{\hat{u}} \cap (\hat{E} \cup D)^1) = 0,
\end{align}
 where $(\cdot)^1$ denotes the set of points with density $1$.

We now combine the cases  (1) and (2) to obtain the statement: since $(y_\eps^{\lambda_\eps,i})_\eps$ are minimizing sequences,  Corollary \ref{cor: main cor} implies that  each $u^\lambda_i$, $\lambda \in \R^d$, $i=1,2$,  and $\hat{u}$ are  minimizers of the problem $ \min_{v \in GSBD^2_h} \mathcal{E}(v)$. In particular, as $e(u_i^\lambda) = e(u_i)$ for   all $\lambda \in \R^d$ for both $i=1,2$, the jump sets of $u^\lambda_1$, $u^\lambda_2$ have to be independent of $\lambda$, i.e., $\mathcal{H}^{d-1}(J_{u_i} \triangle J_{u_i^\lambda}) = 0$ for all $\lambda \in \R^d$ and $i=1,2$.  In view of \eqref{eq: representation} and \eqref{eq: the main convergence}(iv), this yields $\partial^*E_{u_i} \cap \Omega', \partial^* (D \setminus E_{u_i}) \cap \Omega'   \subset J_{u_i} $  up to $\mathcal{H}^{d-1}$-negligigble sets. Since $\hat{E} = E_{u_i}\setminus D$, this implies  for $i=1,2$ that
\begin{align}\label{eq: jumpii}
\partial^* (\hat{E} \cup D)  \cap \Omega' \subset J_{u_i} \ \ \ \ \ \text{ up to $\mathcal{H}^{d-1}$-negligigble sets.}
\end{align}
Recall that $u_1,u_2$ are both minimizers, that also $\hat{u}$ is a minimzer, and that there holds $\hat{u} = u_1=u_2$ on $\Omega' \setminus (\hat{E} \cup D)$, see \eqref{eq: hatti}(i). This along with \eqref{eq: hatti}(ii) and \eqref{eq: jumpii} yields  $e({u}_i) = 0$ on $\hat{E} \cup D$ and $\mathcal{H}^{d-1}(J_{{u}_i} \cap (\hat{E} \cup D)^1) = 0$ for $i=1,2$. Then \eqref{eq: lateli} and \eqref{eq: jumpii} show that $e(u_1) = e(u_2)$ $\mathcal{L}^d$-a.e.\ on $\Omega'$, and $J_{u_1} = J_{u_2}$ up to an $\mathcal{H}^{d-1}$-negligible set.     
  \end{proof}

We finally provide an example that in case (a)  the strains cannot be compared inside $E_{u_1} \cup E_{u_2}$.

\begin{example}\label{e2}
{\normalfont
Similar to Example \ref{ex}, we consider $\Omega'= (0,3) \times (0,1)$, $\Omega = (1,3) \times (0,1)$, $\Omega_1 = (0,2) \times (0,1)$, $\Omega_2 = (2,3) \times (0,1)$, and $h \equiv 0$. Let $z \in W^{2,\infty}(\Omega';\R^d)$ with $\lbrace z = 0 \rbrace = \emptyset$, and define    
$$y_\eps(x) = x \chi_{\Omega_1}(x) + \big(x +  \eps z(x)\big) \chi_{\Omega_2}(x) \ \ \  \ \text{for} \ x \in \Omega'.$$
\RRR Note that $J_{y_\eps} = \partial\Omega_1 \cap\Omega' = \partial\Omega_2 \cap\Omega'$. \EEE Then two possible alternatives are
\begin{align*}
(1)& \ \ P^{\eps}_1 = \Omega_1, \  \ P^{\eps}_2 = \Omega_2, \ \  R_1^{\eps} = \Id, \ \ R_2^{\eps} = \bar{R}_\eps,\\
(2)& \ \ \tilde{P}^{\eps}_1 = \Omega',  \ \ \tilde{R}_1^{\eps} = \Id,
\end{align*}
where $\bar{R}_\eps \in SO(2)$ satisfies $\bar{R}_\eps = \Id + \eps^\gamma A + {\rm O}(\eps^{2\gamma}) $ for some $A \in \R^{2 \times 2}_{\rm skew}$, $\gamma \in (\frac{2}{3},\beta)$.   
Let $u_{\eps} = \eps^{-1} (\sum_{j=1}^2 R_j^{\eps}y_{\eps} \chi_{P_j^{\eps}} -\id)$ and $\tilde{u}_{\eps} = \eps^{-1}  (y_{\eps} -\id)$, We observe that $|u_\eps| \to \infty$ on $\Omega_2$. Possible limits identified in \eqref{eq: the main convergence} are $u = \lambda \chi_{\Omega_2}$ for some $\lambda \in \R^{d}$, $\lambda \neq 0$, with $E_u = \Omega_2$, and $\tilde{u}(x) = z(x) \,  \chi_{\Omega_2}(x)$ with $E_{\tilde{u}} = \emptyset$. This shows that in general there holds $e(u) \neq e(\tilde{u})$  in $E_{u}$.}
\end{example}

\section*{Acknowledgements} 
This work was supported by the DFG project FR 4083/1-1 and by the Deutsche Forschungsgemeinschaft (DFG, German Research Foundation) under Germany's Excellence Strategy EXC 2044 -390685587, Mathematics M\"unster: Dynamics--Geometry--Structure.

%--------------------------------------------------------------------------

%--------------------------------------------------------------------------
%--------------------------------------------------------------------------
 \typeout{References}

\end{document}